\documentclass[12pt]{amsart} 
\usepackage[a4paper,margin=1in,footskip=0.25in]{geometry}
\usepackage{mathrsfs}
\usepackage{amssymb}
\usepackage{amsmath}
\usepackage{amsthm}
\usepackage{thmtools} 
\usepackage{mathtools}
\usepackage{nicematrix}
\usepackage{tikz-cd}
\usepackage{enumitem}
\usepackage{subfiles}
\usepackage{faktor}
\usepackage{quiver} 
\usepackage{subcaption}
\usepackage{float}
\usepackage{xcolor}
\usepackage[colorinlistoftodos]{todonotes}
\usetikzlibrary{shapes.geometric,decorations.pathreplacing,decorations.markings,decorations.pathmorphing,calc,patterns,hobby}

\PassOptionsToPackage{pdfusetitle,colorlinks}{hyperref}
\usepackage{bookmark}
\hypersetup{
  linkcolor={blue!70!cyan!80!black},
  citecolor={blue!30!cyan!80!black},
  urlcolor={blue!50!cyan!80!black}
}

\usepackage{cleveref} 

\newtheorem{theorem}{Theorem}[section]
\newtheorem*{theorem*}{Theorem}
\newtheorem{lemma}[theorem]{Lemma}
\newtheorem{proposition}[theorem]{Proposition}

\theoremstyle{definition}
\newtheorem{definition}[theorem]{Definition}

\theoremstyle{remark}
\newtheorem{remark}[theorem]{Remark}
\newtheorem{example}[theorem]{Example}

\newcommand{\sB}{\mathcal{B}}
\newcommand{\sC}{\mathcal{C}}

\newcommand{\sF}{\mathcal{F}}
\newcommand{\sG}{\mathcal{G}}

\newcommand{\sK}{\mathcal{K}}

\newcommand{\sO}{\mathcal{O}}

\newcommand{\sU}{\mathcal{U}}
\newcommand{\sV}{\mathcal{V}}
\newcommand{\sW}{\mathcal{W}}

\newcommand{\bbA}{\mathbb{A}}

\newcommand{\bbN}{\mathbb{N}}

\newcommand{\bbZ}{\mathbb{Z}}

\newcommand{\frm}{\mathfrak{m}}

\DeclareMathOperator{\coker}{coker}
\DeclareMathOperator{\im}{im}

\DeclareMathOperator{\rk}{rk}

\DeclareMathOperator{\id}{id}
\DeclareMathOperator{\Hom}{Hom}
\DeclareMathOperator{\End}{End}
\DeclareMathOperator{\Aut}{Aut}

\DeclareMathOperator{\proj}{proj}
\DeclareMathOperator{\inj}{inj}
\DeclareMathOperator{\colim}{colim}

\DeclareMathOperator{\AffSch}{AffSch}
\DeclareMathOperator{\Sets}{Sets}

\DeclareMathOperator{\rep}{rep}
\DeclareMathOperator{\GL}{GL}
\DeclareMathOperator{\Mat}{Mat}
\DeclareMathOperator{\rad}{rad}
\DeclareMathOperator{\diag}{diag}
\DeclareMathOperator{\Ext}{Ext}

\newcommand{\GLdd}{\GL_\mathbf{d}}
\newcommand{\leqdeg}{\leq_{\textnormal{deg}}}

\newcommand{\leqdeghom}{\leq_{\textnormal{deg}}^{\sK}}
\newcommand{\dd}{\mathbf{d}}

\newcommand{\leqext}{\leq_{\textnormal{ext}}}
\newcommand{\leqhom}{\leq_{\textnormal{hom}}}
\newcommand{\leqexthom}{\leq_{\textnormal{ext}}^{\sK}}
\newcommand{\leqhomhom}{\leq_{\textnormal{hom}}^{\sK}}

\newcommand{\PP}{\textbf{P}}

\newcommand{\GP}{G_\PP}
\newcommand{\varP}{\sC_\PP}
\newcommand{\ivarPhom}[2]{\sK^{[#1,#2]}_g}
\newcommand{\bvarPhom}{\sK^{\mathbf{b}}_g}
\newcommand{\varPhom}{\sK^{\ast}_g}
\newcommand{\iGhom}[2]{\sG^{[#1,#2]}_g}
\newcommand{\bGhom}{\sG^{\mathbf{b}}_g}
\newcommand{\Ghom}{\sG^{\ast}_g}

\newcommand{\chninj}{\sC^{[-n,0]}(\inj \Lambda)}
\newcommand{\chnproj}{\sC^{[-n,0]}(\proj \Lambda)}
\newcommand{\chnmod}{\sC^{[-n,0]}(\mod \Lambda)}

\newcommand{\bchnproj}{\sC^{\mathbf{b}}(\proj \Lambda)}
\newcommand{\bchnmod}{\sC^{\mathbf{b}}(\mod \Lambda)}

\newcommand{\ichnproj}[2]{\sC^{[#1,#2]}(\proj \Lambda)}
\newcommand{\ichnmod}[2]{\sC^{[#1,#2]}(\mod \Lambda)}

\newcommand{\homprojstar}{\sK^{\ast}(\proj \Lambda)}
\newcommand{\homprojint}[2]{\sK^{[#1,#2]}(\proj \Lambda)}

\newcommand{\bhomproj}{\sK^{\mathbf{b}}(\proj \Lambda)}

\let\mod\relax
\DeclareMathOperator{\mod}{mod}

\let\amsamp=&
\newcommand{\pmatquiver}[1]{%
  \left(\begin{smallmatrix}#1\end{smallmatrix}\right)%
}

\newcommand{\pmat}[1]{%
\begin{pNiceMatrix}
[small]
#1
\end{pNiceMatrix}
}

\newcommand{\biblio}{\bibliographystyle{amsalpha}\bibliography{literature}}

\begin{document}

\renewcommand{\biblio}{}

\title{Degenerations of chain complexes}

\author{Judith Marquardt}
\address[Judith Marquardt]{Univ. Grenoble Alpes, CNRS, IF, 38000 Grenoble, France, and Université Paris-Saclay, UVSQ, CNRS, Laboratoire de Mathématiques de Versailles, 78000, Versailles, France}
\email{\href{mailto:judith.marquardt@univ-grenoble-alpes.fr}{judith.marquardt@univ-grenoble-alpes.fr}}

\keywords{}
\thanks{}

\begin{abstract}
We study degenerations of orbits in varieties of chain complexes of projective modules. We show that degenerations are characterized in terms of certain admissible short exact sequences in the exact category of complexes of projectives, analogously to work from Riedtmann and Zwara.
We then extend our study to the homotopy category. In this context, complexes of projectives with a given $g$-vector form an ind-variety, and we prove that degenerations of orbits are characterized by the existence of certain distinguished triangles in the category. This links to the algebraic definition of degeneration for triangulated categories introduced by Jensen, Su and Zimmermann.
\end{abstract}

\maketitle

\tableofcontents

\biblio

\section{Introduction}
\label{sect:introduction}

Let $k$ be an algebraically closed field and $\Lambda$ a finite dimensional $k$-algebra given by a quiver $Q$ and an admissible ideal $I$. In the study of representation theory, it has proven interesting to consider modules as points in a variety, see for example \cite{G,ADF,Rie,Z,CBS}.

For a fixed dimension vector $\dd = (d_v)_{v \in Q_0}$, the modules of this dimension are the closed points in the representation variety
\begin{equation*}
    \rep(\Lambda,\dd) \subset \prod_{a \in Q_1} \Mat_{d_{t(a) \times s(a)}}(k).
\end{equation*}
The group $\GLdd = \prod_{v \in Q_0} \GL_{d_v}$ acts on $\rep(\Lambda,\dd)$ via base change. Then the orbit of a module $M$, denoted by $\sO_M$, is its isomorphism class. We say that a module $N$ is a degeneration of a module $M$, written $M \leqdeg N$, if there is an inclusion $\sO_N \subset \overline{\sO_M}$. This forms a partial order on the isomorphism classes of modules of a given dimension vector. It is bracketed by two other partial orders: the ext and hom order \cite{ADF, Rie, B2}. The ext order is defined as the transitive closure of $Y \leqext X \oplus Z$ where there is a short exact sequence $0 \rightarrow X \rightarrow Y \rightarrow Z \rightarrow 0$. The hom order is defined as $M \leqhom N$ if for any finite dimensional module $X$, $\dim_k \Hom(X,M) \leq \dim_k \Hom(X,N)$. It is known that the hom order is a refinement of the degeneration order which is in turn a refinement of the ext order, i.e.  there is a chain of implications $(M \leqext N) \Rightarrow (M \leqdeg N) \Rightarrow (M \leqhom N)$.

Degenerations have been classified in representation theoretical terms in \cite{Rie,Z}: for two modules $M$ and $N$, there exists a degeneration $M \leqdeg N$ if and only if there is a short exact sequence of the form $0 \rightarrow N \rightarrow M \oplus Z \rightarrow Z \rightarrow 0$ in $\mod \Lambda$. 

This result has opened up the study of degenerations to other algebraically inspired varieties, for example for infinite dimensional algebras \cite{Y}, in Nakajima's graded affine quiver varieties \cite{KS} and for the stable category of Cohen-Macaulay modules \cite{Y_st, Hi}. It also has inspired an algebraic definition of degeneration in triangulated categories where by definition $X \leqdeg Y$ if there is a triangle of the form $Y \rightarrow X \oplus Z \rightarrow Z \rightarrow X[1]$, see \cite{JSZ, SZ_sym, SZ_deg_0}. If a distinction is necessary, we call these new degenerations algebraic degenerations and those obtained through orbit closure inclusion geometric degenerations.

In this context, varieties associated to the derived category have been studied. In \cite{JSZ_ch_cx}, chain complexes are sorted by dimension vector to build a variety where degenerations are in a one to one correspondence with triangles of the above form. In \cite{JMS}, the derived category is approached via $A_\infty$-modules with a variety for the different homological dimensions. The authors show that geometric degenerations give rise to algebraic degenerations. However not all algebraic degenerations are obtained this way.

This paper aims to offer a new approach to the study of geometric and algebraic degenerations of chain complexes and chain complexes up to homotopy. We define geometric spaces in which points are complexes. These spaces admit a group action whose orbits correspond to the isomorphism class of complexes in either $\bchnproj$ or $\bhomproj$. Then we show that geometric degenerations are in a one to one correspondence to algebraic degenerations in the appropriate category.

In the first part of the paper, we define varieties of chain complexes with underlying projective modules. The dimension vector $\dd$ is replaced by the underlying modules of a complex $\PP = (P^i)_{i \in \bbZ}$. Then the complexes with these modules can be seen as closed points in a closed subvariety
\begin{equation*}
    \varP \subset \prod_{i \in \bbZ} \Hom(P^i, P^{i+1}).
\end{equation*}
The group $\GP = \prod_{i \in \bbZ} \Aut(P^i)$ acts on $\varP$ by conjugation such that the orbits are the isomorphism classes of complexes. We adapt the strategy of the proofs of \cite{Rie,Z} showing the following.
\begin{theorem*}[\Cref{thm:deg_and_ses_in_chnproj}]
    Let $M,N \in \varP$. The following are equivalent:
    \begin{enumerate}
        \item $M \leqdeg N$,
        \item there exists a complex $Z \in \bchnproj$ and a short exact sequence $ 0 \rightarrow N \rightarrow M \oplus Z \rightarrow Z \rightarrow 0$,
        \item there exists a complex $Z \in \bchnproj$ and a short exact sequence $ 0 \rightarrow Z \rightarrow M \oplus Z \rightarrow N \rightarrow 0$.
    \end{enumerate}
\end{theorem*}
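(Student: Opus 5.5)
We follow the strategy of Riedtmann and Zwara. The implications (2)$\Rightarrow$(1) and (3)$\Rightarrow$(1) are the geometric ones and follow Riedtmann's construction. The converses are proved by reduction to Zwara's theorem.

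\textbf{(2)$\Rightarrow$(1) and (3)$\Rightarrow$(1).} Suppose we have a sequence $0 \to N \xrightarrow{f} M\oplus Z \xrightarrow{g} Z \to 0$ in $\bchnproj$. It is degreewise split, since all terms are projective, so we may identify $(M\oplus Z)^i = N^i\oplus Z^i$ as graded modules. Fix the graded module $\mathbf{Q} = \PP\oplus Z$, where $\PP$ is the underlying graded module of $M$. Consider the family of chain maps $f_t \colon N \to M\oplus Z$ with $f_t = f + t\,(0,\iota)$, in Riedtmann's manner. Here $\iota$ is built from the chain endomorphism of $Z$ obtained by lifting the identity along $g$.

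Instead of morphisms, it is cleaner to consider the differentials. Choose the graded splitting $\mathbf{Q}^i \cong N^i\oplus Z^i$. Then the differential of $M\oplus Z$ has the block form
\[
\begin{pmatrix} d_N & h\\ 0 & d_Z\end{pmatrix}.
\]
Rescaling the $N$-component by $t\in k^\ast$ conjugates $h$ to $t h$. This gives points of $\sC_{\mathbf{Q}}$ in the $G_{\mathbf{Q}}$-orbit of $M\oplus Z$ whose limit at $t=0$ is $N\oplus Z$. Hence $N\oplus Z \in \overline{\sO_{M\oplus Z}}$.

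We then need a cancellation lemma: if $N\oplus Z$ lies in the closure of the orbit of $M\oplus Z$, then $N$ lies in the closure of the orbit of $M$. For modules this cancellation is Bongartz's. For complexes we instead use a Zwara-style argument. We consider the finite-dimensional hom-functions $\dim\Hom_{\bchnproj}(X,-)$ on $\varP$ and the upper semicontinuity of ranks. Alternatively, we translate the problem into modules over a finite-dimensional algebra, which is the better route described next.

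\textbf{Reduction to modules.} A complex concentrated in degrees $[a,b]$ with projective terms is the same as a module over the algebra $\Lambda_{[a,b]}$, the path algebra of $\Lambda$ tensored with a linear $A_n$ quiver with relations $\alpha^2=0$. Under this identification, $\bchnproj$ in these degrees becomes the full subcategory of $\Lambda_{[a,b]}$-modules whose components are projective. The variety $\varP$ becomes a union of $\GP$-stable pieces of $\rep(\Lambda_{[a,b]},\dd)$. More precisely, $\varP\times_{\GP} \GLdd$ is open in the subset of $\rep(\Lambda_{[a,b]},\dd)$ where each component is isomorphic to $P^i$. This is because projectivity of $\Lambda$-modules of fixed dimension is an open condition, and the isomorphism class of a projective module is determined by its top, which is locally constant among projectives.

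Orbit closures in $\varP$ correspond to orbit closures in the module variety, intersected with this open locally closed set. Therefore $M\leqdeg N$ in $\varP$ if and only if $M\leqdeg N$ as $\Lambda_{[a,b]}$-modules. Zwara's theorem then yields (1)$\Rightarrow$(2) and (1)$\Rightarrow$(3), but with a module $Z$ that need not lie in $\bchnproj$. It also gives back the cancellation used above.

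\textbf{Main obstacle.} The hard step is ensuring $Z$ can be chosen in $\bchnproj$. We first try to show that the category of $\Lambda_{[a,b]}$-modules with projective components is closed under extensions and under kernels of epimorphisms. This handles the issue in one direction: in (3), $Z$ is a submodule of $M\oplus Z$. That alone does not force $Z$ to have projective components, so we instead revisit Zwara's proof. His $Z$ is built from $M$ and $N$ via a one-parameter family, as a quotient of $M[[t]]$-type lattices, $Z = \bigoplus$ of pieces of $M\otimes k[t]/t^m$. We check that every object in that construction has projective components.

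Failing that, we apply a right $\bchnproj$-approximation. Any $\Lambda_{[a,b]}$-module $Z$ admits a projective-component cover $Z'\to Z$, namely the total complex of a degreewise projective resolution truncated appropriately. We then pull back the sequence along this cover and show that the pulled-back sequence keeps the shape $0\to N\to M\oplus Z'\to Z'\to 0$. This requires that the cover be compatible with the map $M\to Z$, which we expect to hold because $M$ is already in $\bchnproj$. Finally, (2)$\Leftrightarrow$(3) follows by duality, using $\Hom_\Lambda(-,\Lambda)$, which sends complexes of projectives over $\Lambda$ to complexes of projectives over $\Lambda^{op}$.
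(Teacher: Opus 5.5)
Your reduction to $\Lambda_{[a,b]}$-modules is sound, and it is a different route from the paper, which re-runs Riedtmann's and Zwara's arguments inside $\varP$ (\Cref{lem:analogue_Riedtmann}, \Cref{lem:analogue_Zwara}). The locus in $\rep(\Lambda_{[a,b]},\dd)$ where the $i$-th component is isomorphic to $P^i$ is open, because orbits of projectives are open, and it equals $\GLdd\times^{\GP}\varP$. So $\leqdeg$ on $\varP$ coincides with $\leqdeg$ for $\Lambda_{[a,b]}$-modules. Consequently (2)$\Rightarrow$(1) and (3)$\Rightarrow$(1) are just Riedtmann's theorem and its dual for $\Lambda_{[a,b]}$, after enlarging $[a,b]$ to contain the support of $Z$ if necessary. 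The cancellation detour is therefore unnecessary. The hom-function idea could not have supplied it anyway, since hom-functions only detect $\leqhom$.

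The genuine gap is in (1)$\Rightarrow$(2),(3), exactly at the step you flag. Zwara only gives $Z\in\chnmod$, and replacing $Z$ by a complex of projectives is left at ``we expect''. The mechanism you name does not suffice. Lifting the component $M\to Z$ through a map $Z'\to Z$ only splits $M$ off the pullback: $E\cong M\oplus E'$, with $E'$ the pullback of the endomorphism component $Z\to Z$ along $Z'\to Z$. Nothing forces $E'\cong Z'$. Also, your ``truncated total complex'' is never shown to be a right $\chnproj$-approximation, and without that the lift of $M\to Z$ need not even exist.

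What is missing is \Cref{lem:ses_chnmod_to_chnproj}, which rests on the contravariant finiteness of $\chnproj$ in $\chnmod$ (\Cref{lem:proj_contravariantly_finite}). Take a right \emph{minimal} approximation $\delta\colon P\to Z$ and pull back to $0\to N\to E\to P\to 0$.
\begin{itemize}
\item $E\in\chnproj$, being an extension of complexes of projectives.
\item $\rho\colon E\to M\oplus Z$ is again a right $\chnproj$-approximation: factor the composite to $Z$ through $\delta$, then use the pullback.
\item Since $\id_M\oplus\delta$ is a right minimal approximation of $M\oplus Z$, one gets $E\cong M\oplus P\oplus C$.
\item Comparing classes (or dimensions) using $[M]=[N]$ and $[E]=[N]+[P]$ forces $C=0$.
\end{itemize}

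Your other fallback can also be made to work, but not as stated. Not every object in Zwara's construction has projective components; for example $Y/X$ in the proof of \Cref{lem:analogue_Zwara} need not. And the relevant object is not built from $M\otimes k[t]/t^m$. Zwara's $Z$ is $Y\otimes_R R/\frm^m$ for the $R$-family $Y$ with special fibre $N$. Multiplication by a power of a uniformizer gives $Y\frm^i/Y\frm^{i+1}\cong N$. So this $Z$ is an iterated extension of copies of $N$ and already lies in $\chnproj$, which would even bypass the approximation step. Using this, however, requires opening up Zwara's proof rather than citing his theorem.
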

In fact, the complex $Z$ constructed in the proof will be supported on the same interval $[a,b]$ as $M$ and $N$. So this Theorem may be restricted to $\ichnproj{a}{b}$.
We also introduce analogues of the ext and hom order and show that the classical implications still hold.

In the second part, we then define an ind-variety $\bvarPhom$ whose points are chain complexes up to homotopy equivalence for every choice of $g$-vector, i.e. element in the Grothendieck group of $\bhomproj$. This ind-variety is a filtered colimit of chain complex varieties where the colimit structure is given by adding acyclic complexes. Similarly, we define an ind-group acting on the ind-variety whose orbits are the homotopy classes of complexes. A degeneration in the ind-variety exists if and only if we can add acyclic complexes to the complexes involved to find a degeneration in some specific $\varP$. These geometric degenerations are exactly the algebraic degenerations in the homotopy category.
\begin{theorem*}[\Cref{thm:deg_traingles_homotopy}]
    Let $M,N \in \bvarPhom$. The following are equivalent:
    \begin{enumerate}
        \item $M \leqdeg N$,
        \item there exists a complex $Z \in \bhomproj$ and a triangle $ N \rightarrow M \oplus Z \rightarrow Z \rightarrow N[1]$,
        \item there exists a complex $Z \in \bhomproj$ and a triangle $ Z \rightarrow M \oplus Z \rightarrow N \rightarrow Z[1]$.
    \end{enumerate}
\end{theorem*}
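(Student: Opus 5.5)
We will deduce the statement from \Cref{thm:deg_and_ses_in_chnproj}, using the description of degenerations in the ind-variety $\bvarPhom$: $M \leqdeg N$ holds if and only if, after adding suitable acyclic (contractible) complexes, representatives $M \oplus A$ and $N \oplus B$ lie in a common chain complex variety $\varP$ and satisfy $M\oplus A \leqdeg N \oplus B$ there. The proof then consists of translating short exact sequences in $\bchnproj$ into triangles in $\bhomproj$ and back.

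For (1)$\Rightarrow$(2) and (1)$\Rightarrow$(3), choose $A,B$ contractible with $M\oplus A \leqdeg N\oplus B$ in some $\varP$. By \Cref{thm:deg_and_ses_in_chnproj} there is a complex $Z'$ and a short exact sequence $0 \to N\oplus B \to M\oplus A\oplus Z' \to Z' \to 0$ in $\bchnproj$. Every such sequence is degreewise split, since the terms are projective. Hence it yields a distinguished triangle in $\bhomproj$. Since $A$ and $B$ are zero in the homotopy category, this triangle is $N \to M\oplus Z' \to Z' \to N[1]$, which gives (2). In the same way, (3) follows from the other sequence.

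The converse direction carries the main difficulty. Suppose we have a triangle $N \to M\oplus Z \to Z \to N[1]$ in $\bhomproj$. Up to isomorphism of triangles, it is the standard cone triangle of a chain map $f\colon M\oplus Z \to Z$. Rotating gives $N \cong \operatorname{cone}(f)[-1]$, and there is a degreewise split short exact sequence
\[
0 \to Z[-1] \to \operatorname{cone}(f)[-1] \to M\oplus Z \to 0 .
\]
This sequence has the wrong shape. To reach the shape $0 \to N' \to M\oplus Z' \to Z' \to 0$ with $N'$ homotopy equivalent to $N$, we instead replace $f$ by a degreewise split epimorphism. Take $\tilde f = (f, p)\colon M\oplus Z\oplus C \to Z$, where $C = \operatorname{cone}(\id_{Z})[-1]$ is contractible and $p\colon C \to Z$ is the canonical surjection. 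Then $\tilde f$ is surjective in each degree, and its kernel $K$ is homotopy equivalent to $\operatorname{cone}(f)[-1] \cong N$.

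We now set $Z' = Z\oplus C$. This gives $0 \to K \to M\oplus Z' \to Z \to 0$, and adding $C$ to the last two terms via the identity yields
\[
0 \to K \to M\oplus Z' \to Z' \to 0 .
\]
Here $K \simeq N$, so $K \cong N\oplus B$ as complexes for some contractible $B$, because homotopy equivalent bounded complexes of projectives differ by contractible summands. Comparing the terms in each degree, $M$ and $K$ have the same underlying modules. By \Cref{thm:deg_and_ses_in_chnproj} we get $M \leqdeg K$ in $\varP$, and therefore $M \leqdeg N$ in $\bvarPhom$.

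For (3)$\Rightarrow$(1) the argument is dual. Replace the map $Z \to M\oplus Z$ by a degreewise split monomorphism by adding $\operatorname{cone}(\id_Z)$. The cokernel is then homotopy equivalent to $N$, and condition (3) of \Cref{thm:deg_and_ses_in_chnproj} applies. The main obstacle is this bookkeeping. We must make sure that the added contractible complexes appear compatibly in both the middle and the right-hand term, so that the resulting sequence has exactly the form $0 \to K \to M\oplus Z' \to Z' \to 0$ required by \Cref{thm:deg_and_ses_in_chnproj}. We must also check that the modification of $M$ by contractibles is allowed by the ind-variety structure of $\bvarPhom$.
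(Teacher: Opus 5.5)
Your proof of (1)$\Rightarrow$(2),(3) is the paper's argument. Your strategy for the converse is sound, but the bookkeeping in (2)$\Rightarrow$(1) is wrong as written. Start from the degreewise split sequence $0\to K\to M\oplus Z\oplus C\xrightarrow{\tilde f} Z\to 0$. Adding $C\xrightarrow{\id}C$ to the last two terms gives the middle term $M\oplus Z'\oplus C$, not $M\oplus Z'$. Indeed, degreewise splitting gives $K^i\oplus Z^i\cong M^i\oplus Z^i\oplus C^i$, so by Krull--Schmidt $K^i\cong M^i\oplus Z^i\oplus Z^{i-1}$. Hence your claim that $M$ and $K$ have the same underlying modules is false as soon as $Z\neq 0$. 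The statement ``$M\leqdeg K$'' is then not even meaningful, because $M$ and $K$ lie in different chain complex varieties.

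The repair is immediate and needs no extra summand. The original sequence $0\to K\to (M\oplus C)\oplus Z\to Z\to 0$ already has the shape in \Cref{thm:deg_and_ses_in_chnproj}(2), with $M\oplus C$ in place of $M$. So $M\oplus C\leqdeg K$ in the variety of their common underlying modules. Since $C$ is contractible and $K\simeq N$, the complexes $M\oplus C$ and $K$ represent the points $M$ and $N$ of $\bvarPhom$ (\Cref{prop:properties_ind_var}). Hence $M\leqdeghom N$ by \Cref{rmk:deg_homotopy}. The same absorption of $\operatorname{cone}(\id_Z)$ into $M$ is needed in (3)$\Rightarrow$(1).

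With this correction, your route differs from the paper's only in how the triangle becomes a short exact sequence. The paper quotes \Cref{lem:triangles_and_ses} to get $0\to N\to X\to Z\to 0$ with the original end terms. It then absorbs the discrepancy $X\cong M\oplus Z$ in $\bhomproj$ into the middle term via contractibles $Q_X,Q_M$. You instead build the sequence by hand and let the first term move within the homotopy class of $N$. Both rest on the same fact: homotopy equivalent complexes in $\bchnproj$ become isomorphic after adding contractible summands.
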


This construction also restricts to chain complexes bounded on an interval $[a,b]$ so we can study the ind-variety $\ivarPhom{a}{b}$. We then show that degenerations in $\ivarPhom{a}{b}$ always come from degenerations in the $\bvarPhom$. Next, we show that in the case of finite global dimension, any module degeneration gives rise to a degeneration in $\bvarPhom$. Finally, we define analogues of the ext and hom order and show that the usual implications hold. 

\bigskip
This paper is structured as follows. In \Cref{sect:Cb}, we discuss varieties and degenerations of chain complexes, in \Cref{sect:Kb} we recall the concept of an ind-variety and introduce the ind-variety associated to $\bhomproj$ and its degenerations.

For an introduction to the representation theory of path algebras and representations we refer to \cite{ASS}. A background of varieties and algebraic groups can be found in \cite{Ha}. We also refer to the survey \cite{Z_survey} on representation varieties. Details on colimits and categorical notions may be referred to in \cite{ML}. For more context of chain complexes and the homotopy category we cite \cite{W}.

We compose arrows as morphisms and work with left modules, identifying modules with representations of quivers in the usual way.

\biblio

\section{Chain complexes of projective modules}
\label{sect:Cb}

Let $\Lambda$ be a finite dimensional algebra. We denote the category of bounded chain complexes whose underlying modules are finite dimensional by $\bchnmod$. For a complex $X \in \bchnmod$, we call the $(X^i)_{i\in \bbZ}$ the underlying modules of $X$. Since $X$ is bounded, only finitely many of the $X^i$ are non-zero. We also say that $(X^i)_{i\in \bbZ}$ is finitely supported.

For integers $a\leq b$, we let $\ichnmod{a}{b}$ be the subcategory of $\bchnmod$ whose complexes are supported on the interval $[a,b]$. We remark that $\bchnmod$ is an abelian category where short exact sequences are those which are exact in $\mod \Lambda$ in every degree. The subcategory $\ichnmod{a}{b}$ naturally inherits this structure.

Moreover, we denote the exact subcategory of $\bchnmod$ of complexes whose underlying modules are projective by $\bchnproj$. Similarly, we define $\ichnproj{a}{b}$ for $a \leq b$. The exact structure is inherited from $\bchnmod$ and given by admissible short exact sequences which are degree-wise split.

Note that any complex in $\bchnproj$ is contained in an $\ichnproj{a}{b}$ where the support of the complex is contained in $[a,b]$. We will primarily study $\chnproj$ where $n \in \bbN$. By shifting degrees, results obtained there also hold for any $\ichnproj{a}{b}$.

\begin{remark}
\label{rmk:complexes_as_modules}
Note that $\chnmod$ can be interpreted as the category of finite dimensional modules over $A_{n+1} \otimes \Lambda$ where $A_{n+1} = k(n \rightarrow (n-1) \rightarrow \ldots \rightarrow 1 \rightarrow 0)/\rad^2$.
\end{remark}

We will later need the following relation between $\chnproj$ and $\chnmod$.

\begin{lemma}
\label{lem:proj_contravariantly_finite}
    The subcategory $\chnproj$ is contravariantly finite in $\chnmod$.
\end{lemma}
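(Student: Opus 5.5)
Our plan is to construct, for each complex $X \in \chnmod$, an explicit right $\chnproj$-approximation $\pi_X \colon P_X \to X$. By definition, this means every morphism $Q \to X$ with $Q \in \chnproj$ factors through $\pi_X$. The natural candidate comes from the standard "disk" complexes. For a projective $P$ and $i \in [-n,0]$, let $D_i(P)$ be the complex with $P$ in degrees $i-1$ and $i$ and identity differential, whenever $i-1 \geq -n$. Let $S_{-n}(P)$ be the stalk complex $P$ concentrated in degree $-n$.

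The key adjunction we will use is that for any complex $X$ the map $\Hom_{\chnmod}(D_i(P), X) \to \Hom_\Lambda(P, X^{i-1})$ is a bijection. A chain map out of $D_i(P)$ is determined by its component in degree $i-1$, since the degree-$i$ component must equal that map composed with $d_X^{i-1}$. Similarly, $\Hom(S_{-n}(P),X) \cong \Hom_\Lambda(P,X^{-n})$, because there is no degree $-n-1$.

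Concretely, for each $j \in [-n,0]$ choose a projective cover $p_j \colon P(X^j) \to X^j$. Set
\[
P_X = S_{-n}\bigl(P(X^{-n})\bigr) \oplus \bigoplus_{j=-n}^{-1} D_{j+1}\bigl(P(X^{j})\bigr).
\]
Here each summand $D_{j+1}(P(X^j))$ has $P(X^j)$ in degrees $j$ and $j+1$. This is a complex of projectives supported on $[-n,0]$, so it lies in $\chnproj$. Let $\pi_X \colon P_X \to X$ be the sum of the chain maps corresponding under the adjunction to the $p_j$.

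To check that $\pi_X$ is an approximation, take $f \colon Q \to X$ with $Q \in \chnproj$. We will in fact show that $\pi_X$ is degreewise surjective, since in degree $j$ the summand $P(X^j)$ maps onto $X^j$ via $p_j$. Degreewise surjectivity alone is not enough, so we lift inductively.

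Write $q^j$ for the differential of $Q$. We seek $g \colon Q \to P_X$ with $\pi_X g = f$, where $g$ lands in the summand in degree $j$ via some $h_j \colon Q^j \to P(X^j)$, using projectivity of $Q^j$ and $p_j h_j = f^j$.

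Rather than building $g$ componentwise, it is cleaner to use the adjunction on $Q$'s side. Every $Q \in \chnproj$ has a degreewise split surjection from $\widetilde Q = S_{-n}(Q^{-n}) \oplus \bigoplus_j D_{j+1}(Q^j)$. It is given by the maps adjoint to $\id_{Q^j}$, and it is surjective with projective terms, hence degreewise split, but it need not split as a chain map. So the cleaner route is a direct construction.

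Because projective covers may fail to lift compatibly, the most robust choice is to define $g$ on the summand $D_{j+1}(P(X^j))$-coordinate in degree $j$ to be $h_j$ and in degree $j+1$ to be $h_j q^j$. We then verify that $g$ commutes with differentials. It does, because each coordinate of $g$ into a disk summand is exactly the adjoint-type map determined by its lower degree.

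The sum $\pi_X g$ in degree $j$ is then $p_j h_j + (\text{image of } h_{j-1} q^{j-1})$. This equals $f^j + d_X^{j-1} f^{j-1}$, which overshoots $f^j$ by the second term. The main obstacle is exactly this correction.

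We will fix it by choosing the $h_j$ inductively from $j=-n$ upward. We require $p_j h_j = f^j - d_X^{j-1} p_{j-1} h_{j-1}$, which is possible by projectivity of $Q^j$ and surjectivity of $p_j$. A short check shows the resulting $g$ is a chain map with $\pi_X g = f$, which proves contravariant finiteness.

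Alternatively, if this bookkeeping becomes messy, we will appeal to \Cref{rmk:complexes_as_modules}. There $\chnproj$ is $\operatorname{add}$ of finitely many objects, namely the indecomposable disks and stalks at $-n$. This holds because every complex of projectives is a direct sum of such pieces up to... Since that decomposition fails in general, we prefer the explicit construction above. We note, however, that $\chnproj$ coincides with the projectives of $A_{n+1}\otimes\Lambda$ only up to the stalk-at-$0$ issue, so that shortcut is not available.
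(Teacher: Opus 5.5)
There is a genuine gap. The bookkeeping is already wrong in ways that matter. A chain map $S_{-n}(P)\to X$ is a map $a\colon P\to X^{-n}$ with $d_X^{-n}a=0$. So $\Hom(S_{-n}(P),X)\cong\Hom_\Lambda(P,\ker d_X^{-n})$: what matters is the outgoing differential, not the absence of degree $-n-1$. Hence $p_{-n}$ does not define a chain map. Also, no copy of $P(X^0)$ sits in degree $0$, so your $\pi_X$ is not even degreewise surjective. The stalk with the free property is $S_0(P)$, since $\Hom(S_0(P),X)\cong\Hom_\Lambda(P,X^0)$. Dually, a chain map \emph{into} a disk is determined by its \emph{upper} component: $\Hom(Q,D_{j+1}(P))\cong\Hom_\Lambda(Q^{j+1},P)$, with the degree-$j$ component forced to be $\beta q^j$ for $\beta\colon Q^{j+1}\to P$. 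So your prescription ``$h_j$ in degree $j$ and $h_jq^j$ in degree $j+1$'' is not well typed, and any induction would have to run downward from degree $0$, not upward from $-n$.

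More seriously, the approach cannot be rescued by better choices. After the fix, $P_X=S_0(P(X^0))\oplus\bigoplus_{j=-n}^{-1}D_{j+1}(P(X^j))$ is just a projective object of $\chnmod\simeq\mod(A_{n+1}\otimes\Lambda)$ mapping onto $X$. Disks are contractible, so $H^j(P_X)=0$ for all $j<0$. Hence every morphism $Q\to X$ that factors through $\pi_X$ induces zero on $H^j$ for $j<0$. Now take $n\geq 1$, $X=Q=S_{-1}(\Lambda)\in\chnproj$ and $f=\id_X$. This $f$ is the identity on $H^{-1}(X)=\Lambda\neq 0$, so it cannot factor. Concretely, for $\Lambda=k$ and $n=1$, every chain map $(k\to 0)\to(k\xrightarrow{\id}k)$ is zero.

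The underlying reason is that, contrary to your closing remark, $\chnproj$ is much larger than the category of projectives of $\chnmod$; for instance, it contains stalk complexes in every degree. An approximation must therefore record the cycles of $X$, which no sum of disks and degree-$0$ stalks can do.

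The missing idea is the paper's construction. Proceed from degree $0$ downward, and at each step take a projective cover not of $X^{-i}$ but of a pullback $E^{-i}$ of $X^{-i}$ with the already constructed $P^{-i+1}$. In the first step this pullback is formed over $X^0$ via $d_X^{-1}$ and $p^0$; afterwards it is formed over $E^{-i+1}$. This way $P^{-i}$ maps compatibly both to $X^{-i}$ and to $P^{-i+1}$, and the universal property of the pullbacks lets you lift any $f\colon Q\to X$ degree by degree.
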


\begin{proof}
Let $M = (M^{-n} \xrightarrow{d_M^{-n}} M^{-(n-1)} \rightarrow \ldots \rightarrow M^{-1} \xrightarrow{d^{-1}} M^0) \in \chnmod$. We build a right $\chnproj$ approximation $P$ as described in the diagram below.
\[\begin{tikzcd}
	{P^{-n}} \\
	{E^{-n}} & \ldots & {P^{-2}} \\
	&& {E^{-2}} & {P^{-1}} \\
	&&& {E^{-1}} & {P^0} \\
	{M^{-n}} & \ldots & {M^{-2}} & {M^{-1}} & {M^{0}}
	\arrow["{p^{-n}}", two heads, from=1-1, to=2-1]
	\arrow[from=2-1, to=2-2]
	\arrow[from=2-1, to=5-1]
	\arrow[from=2-2, to=2-3]
	\arrow["{p^{-2}}", two heads, from=2-3, to=3-3]
	\arrow["{e^{-2}}", from=3-3, to=3-4]
	\arrow["\lrcorner"{anchor=center, pos=0.125}, draw=none, from=3-3, to=4-4]
	\arrow["{q^{-2}}"', from=3-3, to=5-3]
	\arrow["{p^{-1}}", two heads, from=3-4, to=4-4]
	\arrow["{e^{-1}}", from=4-4, to=4-5]
	\arrow["{q^{-1}}"', from=4-4, to=5-4]
	\arrow["\lrcorner"{anchor=center, pos=0.125}, draw=none, from=4-4, to=5-5]
	\arrow["{p^0}"', two heads, from=4-5, to=5-5]
	\arrow[from=5-1, to=5-2]
	\arrow[from=5-2, to=5-3]
	\arrow["{\varphi^-2}", from=5-3, to=4-4]
	\arrow["{d_M^{-2}}", from=5-3, to=5-4]
	\arrow["{d_M^1}", from=5-4, to=5-5]
\end{tikzcd}\]
Here, $P^0 \xrightarrow{p^0} M^0$ is a projective cover of $M^0$, $E^{-1}$ is the pullback of $d_M^{-1}$ and $p^0$. Iteratively, for all $1 \leq i \leq n$, $P^{-i} \xrightarrow{p^{-i}} M^{-i}$ is a projective cover of $E^{-i}$. Since $d_M^{-i}d_M^{-i-1} = 0$, we can use the universal property of $E^{-i}$ to find a morphism $\varphi^{-i-1}\colon M^{-i-1} \rightarrow E^{-i}$ such that $e^{-i}\varphi^{-i-1} = 0$ and $q^{-i}\varphi^{-i-1} = d_M^{-i-1}$. We let $E^{-i-1}$ be the pullback of $p^{-i}$ and $\varphi^{-i-1}$. The approximation is given by
\[\begin{tikzcd}
	{P^{-n}} & \ldots & {P^{-2}} & {P^{-1}} & {P^0} \\
	{M^{-n}} & \ldots & {M^{-2}} & {M^{-1}} & {M^{0}}
	\arrow[from=1-1, to=1-2]
	\arrow["{q^{-n} p^{-n}}"', from=1-1, to=2-1]
	\arrow[from=1-2, to=1-3]
	\arrow["{e^{-2}\circ p^{-2}}", from=1-3, to=1-4]
	\arrow["{q^{-2} p^{-2}}"', from=1-3, to=2-3]
	\arrow["{e^{-1} \circ p^{-1}}", from=1-4, to=1-5]
	\arrow["{q^{-1}p^{-1}}"', from=1-4, to=2-4]
	\arrow["{p^0}"', from=1-5, to=2-5]
	\arrow[from=2-1, to=2-2]
	\arrow[from=2-2, to=2-3]
	\arrow["{d_M^{-2}}", from=2-3, to=2-4]
	\arrow["{d_M^{-1}}", from=2-4, to=2-5]
\end{tikzcd}\]
Note that the first line is indeed a complex as for all $0 \leq i \leq n-1$, we have 
\begin{equation*}
    e^{-i} p^{-i} e^{-i-1} p^{-i} = \underbrace{e^{-i} \varphi^{-i-1}}_{= 0} q^{-i-1} p^{-i} = 0.
\end{equation*}

To show that this is a right approximation, let $Q \in \chnproj$ and $f \colon Q \rightarrow M$ a map in $\chnmod$
\[\begin{tikzcd}
	{Q^{-n}} & \ldots & {Q^{-2}} & {Q^{-1}} & {Q^0} \\
	{M^{-n}} & \ldots & {M^{-2}} & {M^{-1}} & {M^{0}.}
	\arrow[from=1-1, to=1-2]
	\arrow["{f^{-n}}"', from=1-1, to=2-1]
	\arrow[from=1-2, to=1-3]
	\arrow["{d_Q^{-2}}", from=1-3, to=1-4]
	\arrow["{f^{-2}}"', from=1-3, to=2-3]
	\arrow["{d_Q^{-1}}", from=1-4, to=1-5]
	\arrow["{f^{-1}}"', from=1-4, to=2-4]
	\arrow["{f^0}"', from=1-5, to=2-5]
	\arrow[from=2-1, to=2-2]
	\arrow[from=2-2, to=2-3]
	\arrow["{d_M^{-2}}", from=2-3, to=2-4]
	\arrow["{d_M^{-1}}", from=2-4, to=2-5]
\end{tikzcd}\]
We construct a lifting $h \colon Q \rightarrow P$ inductively. We discuss the first two steps. Since $p^0$ is surjective and $Q^0$ projective, $f^0$ factors through $p^0$, i.e. there is a morphism $h^0 \colon Q^0 \rightarrow P^0$ such that $p^0h^0 = f^0$. Since $f$ is a morphism of complexes, we see that $d_M^{-1}f^{-1} = f^0d_Q^{-1} = p^0h^0d_Q^{-1}$. So by the universal property of $E^{-1}$, there exists a morphism $g^{-1}\colon Q^{-1} \rightarrow E^{-1}$ such that $q^{-1}g^{-1} = f^{-1}$ and $e^{-1}g^{-1} = h^0d_Q{-1}$. As $p^{-1}$ is an epimorphism and $Q^{-1}$ projective, we can find a morphism $h^{-1} \colon Q^{-1} \rightarrow E^{-1}$ such that $p^{-1}h^{-1} = g^{-1}$. Then we have
\begin{align*}
    q^{-1}p^{-1}h^{-1}d_Q^{-2} = g^{-1}q^{-1}d_Q^{-2} = f^{-1}d_Q^{-2} = d_M^{-2}f^{-2} = d_M^{-2}f^{-2} = q^{-1}\varphi^{-2}f^{-2}.
\end{align*}
Since $q^{-1}$ is surjective, we have $p^{-1}h^{-1}d_Q^{-2} = \varphi^{-2}f^{-2}$. So we can use the universal property of $E^{-2}$ to find a morphism $g^{-2} \colon Q^{-2} \rightarrow E^{-2}$ such that $q^{-2}g^{-2} = f^{-2}$ and $e^{-2}g^{-2} = h^{-1}d_Q^{-2}$. Iterating the construction of $h$ and $g$ yields a morphism $h \colon Q \rightarrow P$ showing that $f$ factors through the approximation $P$. Note that $h$ is indeed a morphism of complexes as by design $e^{-i}p^{-i}h^{-i} = e^{-i}g^{-i} = h^{-i+1}d_Q^{-i}$.
\end{proof}

\subsection{Varieties of chain complexes}
\label{subsect:var_of_Cb}

We fix $\PP = (P^i)_{i \in \bbZ}$ a finitely supported sequence of projective modules. Let $[a,b]$ be the smallest interval such that $\PP$ is supported in it. A chain complex $M \in \bchnproj$ with underlying modules $\PP$ is of the form
\begin{equation*}
    M = (\cdots \rightarrow P^{i-1} \xrightarrow{d_M^{i-1}} P^{i} \xrightarrow{d_M^{i}} P^{i+1} \rightarrow \cdots).
\end{equation*}
In particular, provided we know $\PP$, a complex is completely given by its differentials. We write $M = (d_M^{i})_{i \in \bbZ}$ for short. This gives rise to the variety of chain complexes with underlying modules $\PP$
\begin{equation*}
    \varP \subset \prod_{a \leq i < b} \Hom_\Lambda(P^{i}, P^{i+1}).
\end{equation*}
The group $\GP = \prod_{a \leq i \leq b} \Aut(P^{i})$ acts on it by conjugation:
\begin{equation*}
    g.M := (g_{P^{i+1}} d_M^{i} g_{P^{i}}^{-1}).
\end{equation*}
Note that the orbit of $M$ for this action, $\sO_M$, is the isomorphism class of $M$ in $\bchnproj$.

\begin{example}
\label{ex:A_1_chain_cx}
We consider the algebra $\Lambda = kA_1 = k$. So the objects in $\mod \Lambda$ are the finite dimensional $k$-vector spaces. The only indecomposable projective module up to isomorphism is $P = k$. A choice of underlying modules $\PP$ is completely determined by the dimension of each $P^i$. We define $d = d(\PP) = (\dim (P^i))_{i \in \bbZ}$. Without loss of generality, we assume that $\PP$ is supported on the interval $[-n,0]$. Then a point $X \in \varP$ is given by $f \in \prod_{-n \leq i \leq -1} \Mat_{d_{i+1}\times d_i}(k)$ such that $f^i \circ f^{i-1} = 0$ for all $i$. So $\varP = \rep(B, d(\PP))$ where $B = k(A_{n+1})/\rad^2$. Moreover, $\GP = \prod_{-n\leq i \leq 0} \GL_{d_i} = \GL_{d(\PP)}$ and the group action is the same. 
\end{example}

\begin{remark}
\label{rmk:rel_chain_cx_to_rep_var}
Note that in general, varieties of chain complexes are not obtained this way. While chain complexes can be interpreted as modules over $\Lambda \otimes B$, the underlying modules cannot generally be uniquely depicted by a dimension vector. We will se examples of this in \Cref{sect:examples}
\end{remark}

\begin{definition}
\label{def:degeneration}
    Let $M,N \in \varP$. We say that $M$ degenerates to $N$, or that $N$ is a degeneration of $M$, written $M \leqdeg N$, if there is an inclusion $N \in \overline{\sO_M}$.
\end{definition}

\begin{example}
\label{ex:trivial_deg}
    Let $\Lambda$ be any finite dimensional algebra. We consider projective modules $P$ and $Q$ with a morphism $0 \neq f \colon P \rightarrow Q$. Then $(P \xrightarrow{f} Q) \leqdeg (P \xrightarrow{0} Q)$. This can be directly seen by letting $\lambda$ go to zero in the family $(P \xrightarrow{\lambda f} Q)_{\lambda \in k}$.
\end{example}

\begin{example}
\label{ex:A_1_chain_cx_deg}
    We continue the study of $\Lambda = kA_1$ in \Cref{ex:A_1_chain_cx}. Let $m \geq 1$ and $d(\PP) = (d_{-n}, \ldots, d_0)$ where $d_{-n} = d_0 = m$ and $d_i = 2m$ else. A complex $C$ over $\PP$ is isomorphic to a direct sum of indecomposable complexes which are, up to isomorphism, the stalk complexes obtained as a shift of $k$ and minimal non-zero acyclic complexes $(k \xrightarrow{\id}k)$. The different complexes are, up to isomorphism, completely determined by their cohomology dimensions. Let $a(C) = (a_i)_{-n \leq i \leq 0}$ with $a_i = \dim \mathrm{H}^i(C)$. Incidentally, $a_i$ also counts the number of stalk complexes concentrated in degree $i$. 
    
    So the orbits of $\varP$ are parametrised by vectors $a = (a_i) \in \bbN^{n+1}$ satisfying
    \begin{enumerate}
        \item \label{item:dim} The vector respects the dimension, i.e. $a \leq d(\PP)$.
        \item \label{item:complement} The complement of $a$, $d(P) -a$, supports an acyclic complex, i.e. $d(P) - a = (b_{-n}, b_{-n}+ b_{-n+1}, \ldots, b_{-2} + b_{-1}, b_{-1})$ where each $b_i \geq 0$. This can be reformulated to the two following conditions which only depend on $a$ and $d(\PP)$:
        \begin{enumerate}
            \item \label{item:sum_zero} We can decompose $d(\PP)- a$ into the $b_i$s: $\sum_{-n \leq i \leq 0} (-1)^i a_i = 0$.
            \item \label{item:not_zero} The $b_i$ are not negative: $b_i = (-1)^{n-1} (m-a_{-n}) + \sum_{j=i}^{n-1} (-1)^{j-i} (2m-a_{-i}) \geq 0$.
        \end{enumerate}
    \end{enumerate}
    
    We claim that the cover relations of the degeneration order are generated by $(k \xrightarrow{\id}k) \leqdeg (k \xrightarrow{0}k)$. This translates to $a \lessdot c$ if and only if there exists a unique $i$ such that $a_i = c_i-1$, $a_{i+1} = c_{i+1}-1$ and $a_j = c_j$ else. 
    
    To prove this statement, we first note that by \Cref{ex:trivial_deg}, $a \lessdot c$ is indeed a degeneration. We need to show that it is covering and that any non-trivial degeneration factors through degenerations of this type. Let us assume that $M <_\text{deg} N$. Since we can view the variety as a representation variety, we can use that the rank function is lower-semicontinuous. So for any differential $\rk d_M^i \geq \rk d_N^i$. In our setting, if for every $i$, $\rk d_M^i = \rk d_N^i$, then $M \cong N$ but we assumed this is not the case. So there exists an $i$ with $\rk d_M^i > \rk d_N^i$. So an identity degenerates into a zero and we factor through some $a \lessdot c$. Then we can iterate. Since the $a \lessdot c$ change exactly one rank by one, they are covering.
    
    For a concrete example, we consider $n = 1$. In this case, $a = (a_{-1}, a_0)$ where $a_{-1} = a_0 \leq m$. Then the poset of degenerations is given by 
    \[\begin{tikzcd}[sep=small]
    	{(m,m)} && {k^m \xrightarrow{0}k^m} \\
    	{(m-1,m-1)} && {k^m \xrightarrow{\diag(0,\ldots,0,1)}k^m} \\
    	\vdots & \longleftrightarrow & \vdots \\
    	{(1,1)} && {k^m \xrightarrow{\diag(0,1,\ldots,1)}k^m} \\
    	{(0,0)} && {k^m \xrightarrow{\id}k^m.}
    	\arrow[no head, from=1-1, to=2-1]
    	\arrow[no head, from=1-3, to=2-3]
    	\arrow[no head, from=2-1, to=3-1]
    	\arrow[no head, from=2-3, to=3-3]
    	\arrow[no head, from=3-1, to=4-1]
    	\arrow[no head, from=3-3, to=4-3]
    	\arrow[no head, from=4-1, to=5-1]
    	\arrow[no head, from=4-3, to=5-3]
    \end{tikzcd}\]
    
    For $n = 2$, we obtain vectors of the form $a = (a_{-2}, a_{-1}, a_0)$ where $a_{-1} = a_{-2} + a_0$ and $a_0$ and $a_{-2}$ are less or equal than $m$. The poset of degenerations is thus a $m\times m$-grid. For $m = 2$, we obtain:
    \[\begin{tikzcd}[sep=small]
    	&& {(2,4,2)} && \\
    	& {(2,3,1)} && {(1,3,2)} \\
    	{(2,2,0)} && {(1,2,1)} && {(0,2,2)} \\
    	& {(1,1,0)} && {(0,1,1)} \\
    	&& {(0,0,0)}
    	\arrow[no head, from=1-3, to=2-2]
    	\arrow[no head, from=1-3, to=2-4]
    	\arrow[no head, from=2-2, to=3-1]
    	\arrow[no head, from=2-2, to=3-3]
    	\arrow[no head, from=2-4, to=3-3]
    	\arrow[no head, from=2-4, to=3-5]
    	\arrow[no head, from=3-1, to=4-2]
    	\arrow[no head, from=3-3, to=4-2]
    	\arrow[no head, from=3-3, to=4-4]
    	\arrow[no head, from=3-5, to=4-4]
    	\arrow[no head, from=4-2, to=5-3]
    	\arrow[no head, from=4-4, to=5-3]
    \end{tikzcd}\]
    
    For $n \geq 3$, we no longer have a $m^{n-1}$-grid. In particular, there are more minimal elements than $a = 0$. We consider $n=3$ and $m = 1$. The poset of degenerations is given by the following:
    \[\begin{tikzcd}[sep=small]
    	&& {(1,2,2,1)} & \\
    	& {(1,1,1,1)} & {(1,2,1,0)} & {(0,1,2,1)} \\
    	{(1,0,0,1)} & {(1,1,0,0)} & {(0,0,1,1)} & {(0,1,1,0)} \\
    	&& {(0,0,0,0)}
    	\arrow[no head, from=1-3, to=2-2]
    	\arrow[no head, from=1-3, to=2-3]
    	\arrow[no head, from=1-3, to=2-4]
    	\arrow[no head, from=2-2, to=3-1]
    	\arrow[no head, from=2-2, to=3-2]
    	\arrow[no head, from=2-2, to=3-3]
    	\arrow[no head, from=2-3, to=3-2]
    	\arrow[no head, from=2-3, to=3-4]
    	\arrow[no head, from=2-4, to=3-3]
    	\arrow[no head, from=2-4, to=3-4]
    	\arrow[no head, from=3-2, to=4-3]
    	\arrow[no head, from=3-3, to=4-3]
    	\arrow[no head, from=3-4, to=4-3]
    \end{tikzcd}\]
    Note that there are two minimal elements: $(0,0,0,0)$ and $(1,0,0,1)$. These represent the two irreducible components of the variety $\varP$. The vector $(0,0,0,0)$ corresponds to the open orbit of the acyclic complex $(k \xrightarrow{\pmat{1\\0}} k^2 \xrightarrow{\pmat{0 & 1 \\ 0 & 0}} k^2 \xrightarrow{\pmat{1 & 0}} k)$. The closure of this orbit is the irreducible component
    \begin{equation*}
        I_1 = \left\{ X = (k \xrightarrow{M_1} k^2 \xrightarrow{M_2} k^2 \xrightarrow{M_3} k) \mid X \in \varP, \forall i\in \{1,2,3\}, \rk(M_i) \leq 1 \right\}.
    \end{equation*}
    
    Similarly, the vector $(1,0,0,1)$ corresponds to the open orbit of the chain complex $(k \xrightarrow{0} k^2 \xrightarrow{\id} k^2 \xrightarrow{0} k)$. Its closure is the irreducible component 
    \begin{equation*}
        I_2 = \left\{ X = (k \xrightarrow{0} k^2 \xrightarrow{M} k^2 \xrightarrow{0} k) \mid X \in \varP \right\}.
    \end{equation*}
    
    In general, for $n=3$, the number of orbits are enumerated by the house numbers\footnote{\url{https://oeis.org/A051662}} starting at $9$ for $m=1$.
\end{example}

\subsection{Degenerations and short exact sequences}
\label{subsect:deg_in_Cb}

Again, let $\PP = (P^i)_{i \in \bbZ}$ a finitely supported sequence of projective modules. We set $[a,b]$ to be the smallest interval such that $\PP$ is supported in it. 

\begin{theorem}
\label{thm:deg_and_ses_in_chnproj}
    Let $M,N \in \varP$. The following are equivalent:
    \begin{enumerate}
        \item \label{item:deg} $M \leqdeg N$,
        \item \label{item:ses_1} there exists a complex $Z \in \ichnproj{a}{b}$ and a short exact sequence $ 0 \rightarrow N \rightarrow M \oplus Z \rightarrow Z \rightarrow 0$,
        \item \label{item:ses_2} there exists a complex $Z \in \ichnproj{a}{b}$ and a short exact sequence $ 0 \rightarrow Z \rightarrow M \oplus Z \rightarrow N \rightarrow 0$.
    \end{enumerate}
\end{theorem}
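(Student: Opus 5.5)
The plan is to follow the Riedtmann--Zwara strategy. For the easy directions (\ref{item:ses_1})$\Rightarrow$(\ref{item:deg}) and (\ref{item:ses_2})$\Rightarrow$(\ref{item:deg}) I adapt Riedtmann's argument.

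Take a sequence $0 \rightarrow Z \xrightarrow{\iota} M \oplus Z \xrightarrow{\pi} N \rightarrow 0$. Since it is admissible, it is degreewise split, so in each degree $M^i \oplus Z^i \cong Z^i \oplus N^i$. Consider the space $V$ of pairs $(\iota,\pi)$ whose components are degreewise module maps $Z \to M \oplus Z \to N$ and which satisfy the condition:
\begin{itemize}
\item[] $\iota$ and $\pi$ are chain maps for the fixed differentials of $M \oplus Z$ and $Z$ (respectively $N$), and $\pi\iota=0$.
\end{itemize}
The pairs with $\pi$ degreewise surjective and $\iota$ degreewise injective form an open subset $U \subset V$.

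To make this work with $N$ varying, I follow Zwara's formulation. Let $\mathbf{Q}$ be the underlying sequence of $Z$. Consider the set $W$ of pairs $(X, \varphi)$ with $X \in \varP$ and $\varphi\colon Z \to M\oplus Z$ a chain map (with $M\oplus Z$ fixed) such that the cokernel complex has differential $X$ under a fixed degreewise identification of $\coker$ with $\PP$. Concretely, I use the maps $\varphi_t = \iota + t\,\binom{0}{\id_Z}$ (Riedtmann's trick). For $t \neq 0$ generic, $\varphi_t$ is a degreewise split mono whose cokernel is isomorphic to $M$. For $t=0$ the cokernel is $N$.

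The cokernel differential depends algebraically on $t$. It is determined by the induced differential on a complement, and that complement can be chosen algebraically on an open set containing $0$, since split injectivity of the projective maps is open. This gives a curve in $\varP$ meeting $\sO_M$ on a dense open set and passing through $N$, so $N \in \overline{\sO_M}$.

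Direction (\ref{item:ses_1}) is dual: use $\pi_t=\pi + t(0,\id_Z)$ and take kernels. Both kernels and cokernels stay in $\bchnproj$ because the maps are degreewise split.

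The hard direction is (\ref{item:deg})$\Rightarrow$(\ref{item:ses_1}),(\ref{item:ses_2}). I follow Zwara's proof. By viewing complexes as modules over $A_{n+1}\otimes\Lambda$ (\Cref{rmk:complexes_as_modules}), $\varP$ sits inside a module variety. Since $N \in \overline{\sO_M}$, there is a discrete valuation ring $R=k[[t]]$, with field of fractions $K$, and a $\PP\otimes R$-complex $\sM$ whose generic fibre is $M\otimes K$ and whose special fibre is $N$. This is obtained by choosing an irreducible curve through $N$ meeting $\sO_M$ and completing its local ring.

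Zwara's argument then produces a short exact sequence $0\to N \to M\oplus Z\to Z\to 0$ of modules over $A_{n+1}\otimes\Lambda$. Here $Z$ is a finite-dimensional truncation of $\sM/t^m$ for large $m$, and the sequence comes from multiplication by $t$ on $\sM/t^{m+1}\to \sM/t^m$ together with $\sM/t^m \cong$ (generic part) stabilising.

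The main obstacle is to ensure that $Z$ has projective components and that the sequence is degreewise split. Here $\sM/t^m$ has underlying modules $P^i\otimes R/t^m$, which are projective $\Lambda$-modules, and the multiplication maps are degreewise split as maps of $\Lambda$-modules. Where Zwara's argument produces a module rather than a complex of projectives, I replace it by its right $\chnproj$-approximation, using \Cref{lem:proj_contravariantly_finite}, and check via pullback that this preserves the shape of the sequence. Since all constructions occur in degrees $[a,b]$, $Z$ lies in $\ichnproj{a}{b}$. The dual argument, or applying the construction to $\Hom_\Lambda(-,\Lambda)$-duals which swap $\proj\Lambda$ and $\proj\Lambda^{op}$, gives (\ref{item:ses_2}).
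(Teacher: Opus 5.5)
Your proposal is correct and has the same skeleton as the paper:
\begin{itemize}
\item Riedtmann's deformation $\iota+t\binom{0}{\id_Z}$, and its kernel version $\pi+t(0,\id_Z)$, for the implications towards (1). For $(2)\Rightarrow(1)$ the paper instead dualises to $\Lambda^{\mathrm{op}}$.
\item Zwara's valuation-ring argument for $(1)\Rightarrow(2)$.
\item Duality for $(1)\Rightarrow(3)$.
\end{itemize}

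The genuine difference lies in the step you call the main obstacle, and there your own observation is better than your hedge. Since $\mathcal{M}/t^m\mathcal{M}$ has components $P^i\otimes_k R/t^m\cong (P^i)^{\oplus m}$, Zwara's sequence $0\to N\to\mathcal{M}/t^{m+1}\mathcal{M}\to\mathcal{M}/t^{m}\mathcal{M}\to 0$ is already a degreewise split sequence of complexes of projectives supported on $[a,b]$. The isomorphism $\mathcal{M}/t^{m+1}\mathcal{M}\cong M\oplus\mathcal{M}/t^m\mathcal{M}$ holds in the full subcategory $\chnproj$. So $Z=\mathcal{M}/t^m\mathcal{M}$ works as it stands.

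The paper records only $Z\in\chnmod$ in \Cref{lem:analogue_Zwara}. It then repairs this with \Cref{lem:proj_contravariantly_finite} and the pullback argument of \Cref{lem:ses_chnmod_to_chnproj}. On your route both lemmas are superfluous. Likewise, $\Hom_\Lambda(-,\Lambda)$ is an exact duality onto complexes of projective $\Lambda^{\mathrm{op}}$-modules and induces an equivariant isomorphism of the varieties. It therefore gives $(1)\Rightarrow(3)$ without the paper's detour through injective complexes and the Nakayama functor.

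What the paper's version buys is robustness: it would also cover a black-box application of Zwara's theorem over $A_{n+1}\otimes\Lambda$, where $Z$ is an arbitrary complex of modules. If you ever fall back on that, the real content is showing that the pullback $E$ is isomorphic to $M\oplus P$, which you only assert.

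One step needs tightening. The generic fibre of your family is $g\cdot(M\otimes_k K)$ for some $g\in\GP(K)$, not literally $M\otimes_k K$. To get $g$ over $K$ you can pass to the normalisation of the curve and descend the isomorphism from $\overline{K}$ to $K$ (e.g.\ by Noether--Deuring). Alternatively, use the criterion of \Cref{lem:geometry} directly. Either way, you then clear denominators in $g$ exactly as Zwara does.
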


We will show the above Theorem in multiple steps. Without loss of generality, we assume that $[a,b] = [-n,0]$ for some $n \in \bbN$. To show that a short exact sequence as in (\ref{item:ses_2}) gives rise to a degeneration, we use an analogue of the proof from \cite{Rie} in \Cref{lem:analogue_Riedtmann}. To show that degenerations give rise to short exact sequences as in (\ref{item:ses_1}), we adapt the strategy of \cite{Z} using a geometric theorem on degenerations in \cite{GO}. This will only give rise to a short exact sequence where $Z \in \bchnmod$, i.e. the underlying modules of $Z$ not necessarily projective. However, we can replace this $Z$ by one in $\bchnproj$ using the contravariant finiteness of $\bchnproj$ in $\bchnmod$. The remaining implications are then obtained by dualising the first two steps.

We begin with the analogue of \cite{Rie}.
\begin{lemma}
\label{lem:analogue_Riedtmann}
    Let $0 \rightarrow Z \xrightarrow{\pmat{\xi \\ \sigma}} M \oplus Z \xrightarrow{g} N \rightarrow 0$ be a short exact sequence in $\chnproj$ with $M,N \in \varP$. Then $M \leqdeg N$.
\end{lemma}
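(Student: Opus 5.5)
The plan is to adapt Riedtmann's one-parameter family argument. We perturb the injection $\pmat{\xi \\ \sigma}$ by a scalar and follow the cokernel as a point of $\varP$.

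\textbf{Step 1: the family.} For $t \in k$, consider the morphism of complexes $\iota_t = \pmat{\xi \\ \sigma - t\,\id_Z} \colon Z \rightarrow M \oplus Z$. When $\sigma - t\,\id_Z$ is an isomorphism of complexes, $\iota_t$ is a degree-wise split monomorphism. Its cokernel is then isomorphic to $M$ via the map $M \rightarrow \coker \iota_t$, $m \mapsto [(m,0)]$. Indeed, any $(m,z)$ is equivalent to $(m - \xi(\sigma-t)^{-1}z, 0)$, and $(m,0) \in \im \iota_t$ forces $m=0$. For each degree $i$, $\sigma^i$ is an endomorphism of the finite dimensional space $Z^i$, so it has finitely many eigenvalues. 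Hence $\sigma - t\,\id_Z$ is invertible for all $t$ outside a finite set $F$. At $t = 0$ the cokernel is $N$.

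\textbf{Step 2: realise the cokernels as points of $\varP$.} The given sequence is admissible, hence degree-wise split. So we can choose, for each $i$, a $\Lambda$-linear section $s^i \colon P^i = N^i \rightarrow M^i \oplus Z^i$ of $g^i$. These need not form a chain map. Define the module maps
\begin{equation*}
\Phi^i_t = \left(\iota_t^i,\ s^i\right) \colon Z^i \oplus P^i \rightarrow M^i \oplus Z^i .
\end{equation*}
The entries of $\Phi^i_t$ depend polynomially on $t$, and $\Phi^i_0$ is an isomorphism. Being an isomorphism between modules of equal dimension is an open condition (nonvanishing of a determinant over $k$). So there is a nonempty open $U \subset \bbA^1$ containing $0$ on which every $\Phi^i_t$ is invertible, and the inverses are regular in $t$.

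Now transport the differential of $M \oplus Z$ along $\Phi_t$ to get $D_t^i = (\Phi^{i+1}_t)^{-1} d^i_{M\oplus Z} \Phi^i_t$ on $Z \oplus P$. Since $\iota_t$ is a chain map, $\im \iota_t$ is a subcomplex, so $D_t$ is block upper triangular. Its lower-right block $d_{N_t}^i \colon P^i \rightarrow P^{i+1}$ is a differential, and the complex $N_t = (d_{N_t}^i)$ is isomorphic to $\coker \iota_t$. Thus $t \mapsto N_t$ is a morphism $U \rightarrow \varP$ with $N_0 = N$: since $s$ is a section of $g$, the quotient differential at $t=0$ is exactly $d_N$.

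\textbf{Step 3: conclude.} For $t \in U \setminus F$, which is a nonempty open dense subset of the irreducible curve $U$, Step 1 gives $N_t \cong M$, i.e. $N_t \in \sO_M$. By continuity, $N = N_0$ lies in the closure of $\{N_t : t \in U\setminus F\} \subset \sO_M$, so $M \leqdeg N$.

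The main point needing care is Step 2. We must check that the cokernel family genuinely lands in $\varP$ with the fixed underlying modules $\PP$, uniformly and algebraically in $t$. This uses the degree-wise splitness of the sequence (to get $s$ and keep the underlying modules constant) and the openness of invertibility. Checking that the transported differential is block triangular, with quotient block equal to the cokernel differential, is a routine computation.
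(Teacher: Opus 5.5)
Your proposal is correct and is essentially the paper's proof. You use a Riedtmann-style perturbation of the $Z\to Z$ component, take degree-wise $\Lambda$-linear sections of $g$ as a fixed complement (your invertibility of $\Phi_t$ is exactly the paper's condition that $C^i=\im(\varphi^i)$ stays complementary to the perturbed image), and use the eigenvalue argument to get $\coker(\iota_t)\cong M$ for all but finitely many $t$. If anything, your version is more careful: you check that $t\mapsto N_t$ is a morphism into $\varP$ with fixed underlying modules, and you put the perturbation on $\sigma$, whereas the paper's $f_t$ adds $t\cdot\id_Z$ to the component $\xi\colon Z\to M$.
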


\begin{proof}
    We adapt the strategy of \cite{Rie} who treats the $\mod \Lambda$ case. Let $-n \leq i \leq 0$.
    Since the sequence is short exact in $\chnproj$, it is line-wise a split short exact sequence. So there is a section $\varphi^i$ of $g^i$. Note that the $\varphi^i$ do not necessarily form a morphism of chain complexes. However, we have $Z^i \oplus M^i \underset{\text{v.sp.}}{\cong} \im(f^i) \oplus \im(\varphi^i)$. We denote $C^i = \im(\varphi^i)$.
    
    The module structure on this vector space decomposition is given, for any $a \in \Lambda$ as the action
    \[
    \NiceMatrixOptions{code-for-first-row=\scriptstyle,code-for-first-col=\scriptstyle}
    [a]_{Z^i \oplus M^i} = \begin{pNiceMatrix}[first-col,first-row]
        & \im(f^i) & C^i\\
        \im(f^i) & [a]_{Z^i} & 0 \\
        C^i & 0 & [a]_{N^i} \\
    \end{pNiceMatrix}.
    \]
    
    The differential with respect to this decomposition is 
    \[
    \NiceMatrixOptions{code-for-first-row=\scriptstyle,code-for-first-col=\scriptstyle}
    d^i = \begin{pNiceMatrix}[first-col,first-row]
        & \im(f^i) & C^i\\
        \im(f^{i+1}) & d_M^i & * \\
        C^{i+1} & 0 & d_N^i \\
    \end{pNiceMatrix}.
    \]
    
    Let $t \in k$. We define $f_t := \pmat{\xi + t \cdot \id_{Z}\\ \sigma}$. Note that $f_0 = f$. Being a line-wise monomoprhism is an open condition on $\chnproj$. So there is a non-empty open set $\sU_{\text{open}} \subset k$ such that for every $t \in \sU_{\text{open}}$, $f^i_t$ is a monomorphims. It is non-empty since it contains $0$. 
    
    For every $t \in \sU_{\text{open}}$, $\dim(\im(f_t^i)) = \dim(\im(f^i))$. So $C^i$ is a complement of $\im(f_t^i)$ if and only if $C^i \cap \im(f_t^i) = 0$. This is an open condition. So there exists a non-empty open subset $\sU_\text{compl} \subset \sU_\text{open}$ such that for every $t \in \sU_{\text{compl}}$, $C^i$ is a complement of $\im(f^i_t)$. It is non-empty since it contains $0$.
    
    For every $t \in \sU_{\text{compl}}$, since $Z^i \oplus M^i \underset{\text{v.sp.}}{\cong} \im(f_t^i) \oplus \im(\varphi^i)$, we can again describe the module and differential structure given this choice of basis. It is given by
    \[
    \NiceMatrixOptions{code-for-first-row=\scriptstyle,code-for-first-col=\scriptstyle}
    [a]_{Z^i \oplus M^i} = \begin{pNiceMatrix}[first-col,first-row]
        & \im(f_t^i) & C^i\\
        \im(f_t^i) & [a]_{Z^i} & * \\
        C^i & 0 & [a]_t^i \\
    \end{pNiceMatrix} \qquad \text{and} \qquad
    \NiceMatrixOptions{code-for-first-row=\scriptstyle,code-for-first-col=\scriptstyle}
    d^i = \begin{pNiceMatrix}[first-col,first-row]
        & \im(f^i) & C^i\\
        \im(f^{i+1}) & d_Z^i & * \\
        C{i+1} & 0 & d^i_t \\
    \end{pNiceMatrix}.
    \]
    
    Let us consider $\xi^i + t \cdot \id_{Z^i}$. It is an endomorphism of $Z^i$. In particular, if it is injective it is also bijective. It is not injective if there exists a non-zero $z \in Z^i$ such that $(\xi^i + t \cdot \id_{Z^i})(z) = 0$. This is the case if and only if $t$ is an eigenvalue of $\xi^i$. Since $\xi^i$ has only finitely many eigenvalues, $\xi^i + t \cdot \id_{Z^i}$ is an isomorphism for nearly all $t$. We let $\sU = \sU_\text{compl} \backslash \cup_i \{\text{eigenvalues of }\xi^i \}$. This is a non-empty open subset of $k$. Note that $\sU \cup \{0\}$ is also open.
    
    For every $t \in \sU$, we consider the short exact sequence (a priori in $\chnmod$)
    \begin{equation*}
        0 \rightarrow Z \xrightarrow{f_t} M \oplus Z \rightarrow \coker(f_t) \rightarrow 0.
    \end{equation*}
    Since $(\xi^i + t \cdot \id_{Z^i})(z)$ is an isomorphism, $\coker(f_t) \cong M$. This implies that $[a]_t^i = [a]_{M^i}$ and $d^i_t = d_M^i$. So we have found a family of chain complex structures $C_t = ([a]_t, d_t)$ parametrised by an open subset $\sU \cup \{0\}$ such that $C_0 = N$ and for every $t \neq 0$, $C_t \cong M$. Thus $M$ degenerates to $N$.
\end{proof}

To construct the analogue of \cite{Z}, we first decode degenerations into the existence of certain points which we will be able to interpret in a representation theoretic way. The following Lemma is an application of \cite[Theorem 1.2]{GO} to our setting.

\begin{lemma}%
\label{lem:geometry}%
Let $M,N \in \varP$. Then $M \leqdeg N$ if and only if there exists a discrete valuation $k$-algebra $R$ with maximal ideal $\frm$ and residue field $R/\frm = k$, whose quotient field $K$ is finitely generated over $k$ of transcendence degree one, and a complex $Y \in \varP(R)$ such that
\begin{itemize}
    \item $\tau^*(Y) = g . (\tau\eta)^*(M)$, for some $g \in \GP$,
    \item $\pi^*(Y) = N$,
\end{itemize}
where $\eta$, $\tau$ and $\pi$ are the canonical homomorphisms
\[\begin{tikzcd}
	k & R & K \\
	& {R/\frm = k.}
	\arrow["\eta", from=1-1, to=1-2]
	\arrow["\tau", from=1-2, to=1-3]
	\arrow["\pi", from=1-2, to=2-2]
\end{tikzcd}\]
Algebraically, this means that
\begin{itemize}
    \item $Y \otimes_R K = g \cdot (M \otimes_k K)$, and
    \item $Y \otimes R/\frm = N$.
\end{itemize}
\end{lemma}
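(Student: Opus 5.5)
We will deduce this lemma directly from \cite[Theorem 1.2]{GO}, by checking that our setting satisfies its hypotheses. That theorem says the following. Let $G$ be a connected algebraic group acting on an affine variety $X$ of finite type over an algebraically closed field $k$, and let $x, y \in X(k)$. Then $y \in \overline{G.x}$ if and only if there exist a discrete valuation $k$-algebra $R$ as in the statement (residue field $k$, fraction field $K$ finitely generated of transcendence degree one) and a point $Y \in X(R)$ such that
\begin{itemize}
    \item $\tau^*(Y)$ lies in the $G(K)$-orbit of $(\tau\eta)^*(x)$,
    \item $\pi^*(Y) = y$.
\end{itemize}

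\textbf{Step 1: the variety.} We check that $\varP$ is an affine variety of finite type. Each $\Hom_\Lambda(P^i,P^{i+1})$ is a finite dimensional $k$-vector space, so the product $\prod_{-n\le i<0}\Hom_\Lambda(P^i,P^{i+1})$ is an affine space. Inside it, $\varP$ is cut out by the quadratic equations $d^{i+1}d^{i}=0$, so it is a closed subscheme. We will use its functor of points: for a commutative $k$-algebra $A$, $\varP(A)$ consists of tuples $d^i\in \Hom_\Lambda(P^i,P^{i+1})\otimes_k A$ with $d^{i+1}d^i=0$. Equivalently, these are complexes of $\Lambda\otimes_k A$-modules with underlying modules $P^i\otimes_k A$. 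This identification is what justifies the algebraic reformulation $Y\otimes_R K$ and $Y\otimes_R R/\frm$ at the end of the statement.

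\textbf{Step 2: the group.} We check that $\GP$ is a connected affine algebraic group. Each $\Aut_\Lambda(P^i)$ is the group of units of the finite dimensional algebra $\End_\Lambda(P^i)$. It is therefore a nonempty Zariski open subset of an affine space, hence irreducible, so connected. It is also affine, via the usual determinant trick. A finite product of such groups is again a connected affine algebraic group. The conjugation action $g.M=(g_{P^{i+1}}d_M^ig_{P^i}^{-1})$ is given by polynomial formulas, so it is a morphism $\GP\times\varP\to\varP$. Its orbits are the $\sO_M$ of \Cref{def:degeneration}.

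\textbf{Step 3: applying the theorem.} With $x=M$ and $y=N$, \cite[Theorem 1.2]{GO} gives exactly the two bulleted conditions of the lemma. The only point needing care is which group the element $g$ lives in. In the statement, $g$ should be read as an element of $\GP(K)=\prod_i\Aut_{\Lambda\otimes K}(P^i\otimes_k K)$, acting on $\varP(K)$, and not as a $k$-point. We make this explicit, noting that $\GP(K)$ acts by the same formula. The algebraic translation then follows from Step 1:
\begin{itemize}
    \item $\tau^*(Y)=Y\otimes_R K$,
    \item $(\tau\eta)^*(M)=M\otimes_k K$,
    \item $\pi^*(Y)=Y\otimes_R R/\frm$.
\end{itemize}

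The main obstacle is this bookkeeping: matching the hypotheses of \cite{GO} exactly, namely connectedness of $\GP$ and the fact that $g$ is a $K$-point. There is no genuine computation to carry out.
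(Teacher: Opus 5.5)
Your proposal is correct and follows the paper's approach: the paper gives no separate proof and presents the lemma as a direct application of \cite[Theorem 1.2]{GO} to the conjugation action of $\GP$ on $\varP$, so your checks (that $\varP$ is affine, that $\GP$ is a connected affine algebraic group, and that the action is a morphism) simply make that application explicit. Your point that $g$ must be read as a $K$-point of $\GP$ rather than an element of $\GP=\GP(k)$ is also right, and it matches how the paper actually uses the lemma in the proof of \Cref{lem:analogue_Zwara}, where it writes $g \in \GP(K)$.
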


Note that $\varP(R)$ consists of complexes with underlying modules $\PP\otimes_k R = (P^{-n}\otimes_k R, \ldots, P^{-1}\otimes_k R, P^0\otimes_k R)$. In the setting of the above Lemma, certain properties hold that we will need to reference later. These all appear in \cite[p. 210 f.]{Z}.

\begin{lemma}
\label{lem:dvr_properties}
    Let $R$ be a discrete valuation $k$-algebra $R$ with maximal ideal $\frm$ and residue field $R/\frm = k$, whose quotient field $K$ is finitely generated over $k$ of transcendence degree one. The following hold.
    \begin{enumerate}
        \item There exists an element $f \in \frm \backslash \frm^2$ such that for all $i \geq 1$, $\frm^i = (f^i)$.
        \item For every $i \geq 1$, there is a short exact sequence in $\mod R$ of the form
        \begin{equation*}
            0 \rightarrow R/\frm \xrightarrow{\beta_i} R/\frm^{i+1} \xrightarrow{\gamma_i} R/\frm^i \rightarrow 0
        \end{equation*}
        where $\beta_i(r+ \frm) = f^i \cdot r + \frm^{i+1}$ and $\gamma_i(r + \frm^{i+1}) = r + \frm^i$.
        \item Let $\sB$ be a $k$-basis of $R$. Let $\sV \subset \sB$ finite. Then there exists a natural number $s$ and $\sW \subset \sB$ finite such that as a vector space $\frm^s \oplus \langle \sV \rangle \oplus \langle \sW \rangle = R$.
    \end{enumerate}
\end{lemma}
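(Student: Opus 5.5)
The plan is to get all three statements from the elementary structure of a discrete valuation ring, with the Krull intersection theorem supplying the one non-formal input for (3). The hypothesis that $K$ is finitely generated of transcendence degree one is not needed here; it only matters for \Cref{lem:geometry}. We use only that $R$ is a Noetherian local domain with principal maximal ideal and residue field $k$.

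For (1), I take $f$ to be a uniformizer. Since $R$ is a DVR, $\frm$ is principal, say $\frm = (f)$. Then $\frm^i = (f)^i = (f^i)$ for all $i \geq 1$. Moreover $f \notin \frm^2$: otherwise $f = f^2 r$ for some $r \in R$, and since $R$ is a domain this gives $1 = fr \in \frm$, which is absurd.

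For (2), I check the required properties of $\beta_i$ and $\gamma_i$ one at a time.
\begin{itemize}
\item $\beta_i$ is well defined and $R$-linear: if $r \in \frm$, then $f^i r \in (f^{i+1}) = \frm^{i+1}$.
\item $\beta_i$ is injective: if $f^i r = f^{i+1} s$, cancelling $f^i$ in the domain $R$ gives $r = fs \in \frm$.
\item $\gamma_i$ is well defined and surjective, because $\frm^{i+1} \subset \frm^i$.
\item Exactness in the middle: $\ker \gamma_i = \frm^i/\frm^{i+1} = (f^i)/\frm^{i+1}$, and this is exactly $\im \beta_i$.
\end{itemize}
By induction on $i$, using these sequences and $R/\frm = k$, I get $\dim_k R/\frm^s = s < \infty$ for every $s \geq 1$. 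This is recorded here for use in (3).

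For (3), the first step is to choose $s$ so that $\frm^s \cap \langle \sV \rangle = 0$. The subspaces $\frm^s \cap \langle \sV \rangle$, for $s \geq 1$, form a decreasing chain in the finite-dimensional space $\langle \sV \rangle$, so the chain stabilises. By the Krull intersection theorem (or directly from the valuation: a nonzero $r$ has finite valuation $v(r)$ and so $r \notin \frm^{v(r)+1}$), we have $\bigcap_s \frm^s = 0$. Hence the chain stabilises at $0$, and we fix such an $s$.

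The second step is to complete $\sV$ inside the quotient. The projection $R \to R/\frm^s$ is now injective on $\langle \sV \rangle$, and $\sV \subset \sB$ is linearly independent. So the images of the elements of $\sV$ are linearly independent in the finite-dimensional space $R/\frm^s$. Since $\sB$ spans $R$, the images of $\sB$ span $R/\frm^s$. We can therefore extend the image of $\sV$ to a basis of $R/\frm^s$ using images of finitely many elements of $\sB$, and we let $\sW \subset \sB$ be that finite set. Then $\langle \sV \rangle \oplus \langle \sW \rangle$ maps isomorphically onto $R/\frm^s$, which gives $R = \frm^s \oplus \langle \sV \rangle \oplus \langle \sW \rangle$ as vector spaces.

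The only real obstacle is the choice of $s$ in (3), that is, guaranteeing $\frm^s \cap \langle \sV \rangle = 0$. This is exactly where finite-dimensionality of $\langle \sV \rangle$ combines with the vanishing of $\bigcap_s \frm^s$; everything else is routine linear algebra.
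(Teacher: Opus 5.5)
Your proposal is correct and complete. The paper gives no argument of its own here, only the pointer to \cite[p.~210 f.]{Z}, where these facts are recorded as standard properties of a DVR; your verification is the standard justification behind that citation: a uniformizer and cancellation in a domain for (1) and (2), and $\dim_k R/\frm^s = s$ combined with $\bigcap_s \frm^s = 0$ to force $\frm^s \cap \langle \sV \rangle = 0$ for (3). You are also right that the transcendence-degree hypothesis plays no role in this lemma.
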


Using these two lemmas, we may now adapt the proof of \cite{Z}.
\begin{lemma}
\label{lem:analogue_Zwara}
    Let $M,N \in \varP$ such that $M \leqdeg N$. Then there exists a complex $Z \in \chnmod$ and a short exact sequence $ 0 \rightarrow N \rightarrow M \oplus Z \rightarrow Z \rightarrow 0$. 
\end{lemma}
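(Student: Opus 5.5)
Following Zwara, we first apply \Cref{lem:geometry} to $M \leqdeg N$. This gives a discrete valuation $k$-algebra $R$ with uniformiser $f$ as in \Cref{lem:dvr_properties} and a complex $Y \in \varP(R)$ with $Y \otimes_R K \cong M \otimes_k K$ (after twisting by $g$) and $Y \otimes_R R/\frm \cong N$. We regard $Y$ as a complex of $\Lambda\otimes_k R$-modules whose underlying modules are the free $R$-modules $P^i \otimes_k R$. Equivalently, by \Cref{rmk:complexes_as_modules}, $Y$ is a module over $(A_{n+1}\otimes\Lambda)\otimes_k R$ that is free of finite rank over $R$. The point is that all of Zwara's argument takes place in this module category, so it transfers verbatim.

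The first step is to compare $Y$ with $M\otimes_k R$. Since $\tau^*(Y) = g.(\tau\eta)^*(M)$ and $g$ has entries in $K$, we can clear denominators. Multiplying by a suitable power $f^s$ gives a morphism of complexes $\phi\colon M\otimes_k R \to Y$ that is injective, with $\phi\otimes_R K$ an isomorphism. Its cokernel $C = Y/\phi(M\otimes R)$ is therefore a complex killed by $\frm^s$, and hence finite dimensional over $k$. Next we use the short exact sequences $0\to R/\frm \to R/\frm^{i+1}\to R/\frm^i\to 0$ of \Cref{lem:dvr_properties}(2). Tensoring them over $R$ with $Y$, which is $R$-flat, yields exact sequences of complexes in $\chnmod$:
\begin{equation*}
0 \to Y/\frm Y \to Y/\frm^{i+1}Y \to Y/\frm^i Y \to 0,
\end{equation*}
with $Y/\frm Y \cong N$.

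The remaining task is to turn this into $0\to N\to M\oplus Z\to Z\to 0$ with $Z\in\chnmod$, i.e. finite dimensional in each degree. We take $Z = Y/\frm^{s}Y$ for a large $s$, or a variant of it. It then suffices to show that $Y/\frm^{s+1}Y \cong M\oplus Y/\frm^{s}Y$ as complexes of $\Lambda$-modules (over $k$, forgetting the $R$-structure). Zwara gets this splitting as follows. He uses \Cref{lem:dvr_properties}(3) to choose a finite-dimensional $k$-subspace complement, so that $R = \frm^s\oplus\langle\sV\rangle\oplus\langle\sW\rangle$. Through $\phi$ this identifies $Y/\frm^{s}Y$ (viewed over $k$) with a direct sum of copies of $M$ together with a controlled error term. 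One then compares the two filtrations $\frm^iY$ and $\phi(\frm^i M\otimes R)$ for $i \gg 0$, where they are sandwiched: $f^sY\subseteq \phi(M\otimes R)\subseteq Y$. This gives the isomorphism $M\oplus Y/\frm^sY \cong Y/\frm^{s+1}Y$, compatible with the sequence above. All maps involved ($\phi$, multiplication by $f$, the projections) are morphisms of complexes, being $R$-linear maps of $\Lambda\otimes A_{n+1}$-modules. So the short exact sequence lives in $\chnmod$, where exactness is degreewise.

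I expect the main obstacle to be this splitting step: producing a $k$-linear complement that is simultaneously $\Lambda$-linear and commutes with the differentials. The cleanest route is not to redo Zwara's computation but to invoke it. The whole situation is an instance of Zwara's theorem for the finite dimensional algebra $A_{n+1}\otimes\Lambda$. Concretely, $Y$ is an $R$-family of $A_{n+1}\otimes\Lambda$-modules and $\varP$ maps equivariantly into $\rep(A_{n+1}\otimes\Lambda,\dd)$ for the dimension vector $\dd$ determined by $\PP$. The data of \Cref{lem:geometry} is exactly the input of Zwara's proof in \cite[p.~210 f.]{Z} for the modules $M$ and $N$ over $A_{n+1}\otimes\Lambda$. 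So his argument directly yields an $A_{n+1}\otimes\Lambda$-module $Z$, i.e. a complex in $\chnmod$, and an exact sequence $0\to N\to M\oplus Z\to Z\to 0$. The one point to check is that the $g\in\GP$ in \Cref{lem:geometry} can be regarded as an element of $\GL_\dd(K)$ acting compatibly. This holds because $\GP$ is a subgroup of the automorphisms of the underlying module.
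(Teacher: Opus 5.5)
Your proof is correct, but its decisive step differs from the paper's. The paper re-runs Zwara's argument inside $\chnmod$ in four steps:
\begin{enumerate}
\item it clears denominators to get $\varphi\colon M\otimes_k R\to Y$ with image $X$;
\item it finds $t$ with $Y\frm^t\subset X$;
\item it uses a projective cover of $Y/X$ in $\chnmod$, a finite-intersection argument and \Cref{lem:dvr_properties}(3) to show that $X\frm^s$ is a direct summand of $Y$ in $\chnmod$;
\item it concludes $N_{s+t+1}\cong N_{s+t}\oplus M$.
\end{enumerate}

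You instead reduce to Zwara's theorem for the finite-dimensional algebra $A_{n+1}\otimes\Lambda$, via \Cref{rmk:complexes_as_modules}. Fixing the matrices of the arrows of $Q$ to be those of the $P^i$, and letting the differentials give the remaining arrows, makes $\varP$ a closed subvariety of $\rep(A_{n+1}\otimes\Lambda,\dd)$. Moreover, $\GP=\prod_i\Aut_\Lambda(P^i)$ is exactly the subgroup of $\GL_\dd$ of $\Lambda$-linear elements, and it acts by restriction.

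You do not even need to feed the data of \Cref{lem:geometry} into Zwara's proof. Since $\varP$ is closed, $N\in\overline{\GP\cdot M}\subseteq\overline{\GL_\dd\cdot M}$, so $M\leqdeg N$ as $A_{n+1}\otimes\Lambda$-modules. Zwara's theorem, used as a black box, then gives $Z\in\mod(A_{n+1}\otimes\Lambda)=\chnmod$ together with the sequence, which is exact degreewise.

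Your route makes the lemma a formal corollary of the module case. \Cref{rmk:rel_chain_cx_to_rep_var} is no obstruction, because this direction only needs an equivariant closed embedding, not an identification of $\varP$ with a representation variety. The same trick would not give the converse, since $\overline{\GL_\dd\cdot M}\cap\varP$ may be larger than $\overline{\GP\cdot M}$. The paper's route, by contrast, is self-contained and builds $Z$ explicitly within chain complexes.

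One caution about your intermediate sketch of Zwara's splitting: it is not a proof as written.
\begin{itemize}
\item You use a single exponent $s$ for three different things: clearing the denominators of $g$, annihilating $Y/\phi(M\otimes_k R)$ (which also depends on the denominators of $g^{-1}$), and the splitting index.
\item The sandwich $f^sY\subseteq\phi(M\otimes_k R)\subseteq Y$ alone does not yield $Y/\frm^{s+1}Y\cong M\oplus Y/\frm^sY$.
\item The missing ingredient is precisely that $\phi(M\otimes_k\frm^s)$ has a complement in $Y$ in $\chnmod$. That is why the paper needs the index $s+t$.
\end{itemize}
Since your final argument invokes Zwara's theorem directly, this does not affect correctness.
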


\begin{proof}
    We adapt the strategy of \cite{Z} who treats the $\mod \Lambda$ case.
    
    By \Cref{lem:geometry}, using the notation introduced there, we consider the complex $Y \in \varP(R)$ of the form
    \begin{equation*}
        M = (P^{-n} \otimes_k R \xrightarrow{d_Y^{-n}} P^{-n+1} \otimes_k R \rightarrow \ldots \rightarrow P^{-1} \otimes_k R \xrightarrow{d_Y^{-1}} P^0 \otimes_k R).
    \end{equation*}
    Note that $P^i \otimes_k R$ is an $\Lambda$-$R$-bimodule. As an $R$-module, it is free of rank $\dim(P)$. Moreover, using \Cref{rmk:complexes_as_modules}, we can see that $Y$ is free of finite rank over $R$ as an $(A_n \otimes \Lambda)$-$R$-module. So we have an exact functor $\sF = {_\Lambda}Y \otimes_R (-) \colon \mod R \rightarrow \chnmod$. 
    
    For $i \geq 1$, let $N_i = \sF(R/\frm^i) = Y/Y\frm^i$. By \Cref{lem:geometry}, ${_\Lambda}N_1 = {_\Lambda}(Y \otimes_R R/\frm) = {_\Lambda}N$.
    
    Since $\sF$ is exact, the short exact sequence from \Cref{lem:dvr_properties} (2) yields a short exact sequence 
    \begin{equation*}
        0 \rightarrow N_1 \rightarrow N_{i+1} \rightarrow N_i \rightarrow 0.
    \end{equation*}
    
    Since $N_1 = N$, it remains to show that for some $i$, $N_{i+1} \cong N_i \oplus M$. 
    
    Recall that for some $g \in \GP(K)$, $Y \otimes_R K = g \cdot (M \otimes_k K)$. Let $f \in R$ as in \Cref{lem:dvr_properties} (1). Since $g$ is given by matrices with entries in $K$, we can identify a smallest power of $f$, $f^j$, such that every entry of $f^j \cdot g$ lies in $R$. The inverse of $f^j \cdot g$ is given by $f^{-j} \cdot g^{-1}$. For all $i$, 
    \begin{equation*}
        f^{-j} \cdot g_{i-1}^{-1} d_M^i f^{j} \cdot g_{i} = f^{-j} \cdot f^{j} \cdot g_{i-1}^{-1} d_M^i g_{i} = g_{i-1}^{-1} d_M^i g_{i}.
    \end{equation*}
    So the action of $g$ and $f^{j} \cdot g$ are the same and we can always assume that all entries of $g$ lie in $R$. 
    
    This means that we can restrict the isomorphism $g \colon M \otimes_k K \rightarrow Y \otimes_R K$ to a morphism $\varphi = g|_{M \otimes_k R} \colon M \otimes_k R \rightarrow Y$ which is line-wise a monomorphism of $\Lambda$-$R$-bimodules. It is not necessarily a monomorphism of $\chnproj$. We let $X = \im \varphi \in \chnmod(R)$. Here, the image is simply the line-wise image. The complex structure is naturally inherited from $Y$.
    
    We want to show that there is a natural number $t$ with $Y\frm^t \subset X$ and that $Y/X \in \chnmod$, i.e. the modules appearing in the complex are finite dimensional as $\Lambda$-modules. Note that this complex is not necessarily in $\chnproj$, i.e. the modules might not be projective.
    
    Line-wise, $\varphi$ is a monomorphism $\varphi^i \in \End(P^i \otimes_k R)$. Recall that $P^i \otimes_k R$ is a free $R$-module of finite rank. Since $R$ is a discrete valuation ring and thus a principal ideal domain, $P^i \otimes_k R / \varphi^i(P^i \otimes_k R)$ is isomorphic to some $R/\frm^{t_1} \oplus \ldots \oplus R/\frm^{t_{\dim(P)}}$. Let $t$ be the maximum of all $t_j$ for all $i$. Then for all $i$, $P^i \otimes_k R / \varphi^i(P^i \otimes_k R)$ is annihilated by $\frm^t$. In particular, $Y\frm^t \subset X$ and thus $Y/X \subset Y/Y\frm^t \in \chnmod$.
    
    Next, we want to show that there exists a natural number $s$ such that $X \frm^s$ is a direct summand of $Y$ in $\chnmod$. Note that we do not require a complement in $\chnproj$.
    
    Let $\sB$ be a $k$-basis of $R$. For $b \in \sB$, we define $M_b = \varphi (M \otimes_k \langle b \rangle) \subset X$. Seen as an element in $\chnmod$, it is isomorphic to $M$. Moreover, $M \otimes_k R \cong \bigoplus_{b \in \sB} M_b$.
    
    We take the projective cover $P \xrightarrow{\xi} Y/X$ in $\chnmod$ and lift this map to $Y$.
    \begin{equation*}
        \begin{tikzcd}
        	& Y \\
        	P & {Y/X.}
        	\arrow[two heads, from=1-2, to=2-2]
        	\arrow["{\xi'}", dashed, from=2-1, to=1-2]
        	\arrow["\xi"', from=2-1, to=2-2]
        \end{tikzcd}
    \end{equation*}
    Let $Z_0 = \im \xi'$. Note that by construction, $Z_0 \in \chnmod$. Moreover, $Z_0 + X = Y$. This is not necessarily a direct sum. The intersection $Z_0 \cap X = Z_0 \cap (\bigoplus_{b \in \sB} M_b)$. Since $Z_0$ consist of finitely many finite dimensional $\Lambda$-modules, this intersection is finite, i.e. there exists a finite subset $\sV \subset \sB$ such that $Z_0 \cap (\bigoplus_{b \in \sB} M_b) = Z_0 \cap (\bigoplus_{b \in \sV} M_b)$. We let $Z = Z_0 + \bigoplus_{b \in \sV} M_b$. Then $Z \in \chnmod$ and $Z \cap X = \bigoplus_{b \in \sV} M_b$.
    
    Now let $C = \bigoplus_{b \in \sB \backslash \sV} M_b$. Then $Y = Z \oplus C$.
    
    We use \Cref{lem:dvr_properties} (3) on $\sV$ to find $R = \frm^s \oplus \langle \sV \rangle \oplus \langle \sW \rangle$.
    So in $\chnmod$, 
    \begin{equation*}
        X = \underbrace{\varphi(M \otimes_k \frm^s)}_{= X \frm^s} \oplus \underbrace{\bigoplus_{b \in \sV} M_b}_{= Z \cap X} \oplus \underbrace{\bigoplus_{b \in \sW}M_b}_{=: W}.
    \end{equation*}
    Let $C' = X \frm^s \oplus W$. We show that $Y = C' \oplus Z = X \frm^s \oplus W \oplus Z$.\\
    First, we use $Z \cap X \subset Z$ and $C' + (Z \cap X) = X$ to conclude that $ C' + Z = (C' + Z) + (Z \cap X) = Z + X = Y$.
    Secondly, we use that $C' \subset X$ to conclude that $C' \cap Z = C' \cap Z \cap X = 0$.
    
    So $X \frm^s$ is indeed a direct summand of $Y$.
    
    Now let us put these two parts together. Let $Z_1 = C'$ and $Z_2 = X/Y \frm^t$. Note that for any $j$, by multiplication with $f^j$, $Z_2 \cong X \frm^i/Y \frm^{t+i}$. Moreover, we can use that $Y = X \frm^s \oplus Z_1$ and $Y\frm^{s+t} \subset X\frm^{s}$ to show that
    \begin{equation*}
        Y/Y\frm^{s+t} = (X \frm^s \oplus Z_1)/Y\frm^{s+t} = X \frm^s/Y\frm^{s+t} \oplus Z_1  = Z_1 \oplus Z_2.
    \end{equation*}
    Finally, we remark that in $\chnmod$, $X\frm^{j+1} \oplus M \cong X\frm^{j}$.
    
    For any $i$, we have the following chain of inclusions in $\chnmod$: 
    \begin{equation*}
        Y \frm^{s+t+i} \subset X \frm^{s+i}\subset X\frm^s\subset Y.
    \end{equation*}
    Here, the last two inclusion have direct complements, $M^i$ and $Z_1$. So 
    \begin{align*}
        Y/Y \frm^{s+t+i} &\cong (X \frm^{s+i} \oplus M^i \oplus Z_1)/Y \frm^{s+t+i}\\
        &\cong X \frm^{s+i}/Y \frm^{s+t+i}  \oplus M^i \oplus Z_1 \\
        &\cong Z_2 \oplus M^i \oplus Z_1 \cong Y/Y\frm^{s+t} \oplus M^{i}.
    \end{align*}
    In particular, for $i = 1$, we find that $N_{s+t+1} \cong N_{s+t} \oplus M$.
\end{proof}

It remains to replace the chain complex $Z$ by one whose underlying modules are projective.
\begin{lemma}
\label{lem:ses_chnmod_to_chnproj}
    Let $M$, $N$ be two complexes in $\chnproj$ such that there exists a short exact sequence $ 0 \rightarrow N \rightarrow M \oplus Z \rightarrow Z \rightarrow 0$ where $Z \in \chnmod$. Then there exists a complex $P \in \chnproj$ and a short exact sequence $ 0 \rightarrow N \rightarrow M \oplus P \rightarrow P \rightarrow 0$.
\end{lemma}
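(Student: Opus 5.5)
The plan is to replace $Z$ by a right $\chnproj$-approximation, using \Cref{lem:proj_contravariantly_finite}, and to transport the short exact sequence along it. Write the epimorphism of the given sequence as $(\alpha,\beta)\colon M\oplus Z\to Z$ with $\beta\in\End(Z)$, and let $N\to M\oplus Z$ be its kernel. Let $p\colon P\to Z$ be the right approximation constructed in \Cref{lem:proj_contravariantly_finite}.

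First I would check that $p$ is degree-wise surjective. The construction takes $p^0$ to be a projective cover of $Z^0$ and each $P^{-i}$ to be a projective cover of the pullback $E^{-i}$, which surjects onto $Z^{-i}$ because $p^{-i+1}$ is surjective. This matters because the pullback of an epimorphism along $p$ should still fit into a diagram whose bottom row is the original sequence.

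Next I form the pullback $E$ of $(\alpha,\beta)$ and $p$ in $\chnmod$. This gives a short exact sequence
\[
0\to N\to E\to P\to 0,
\]
together with a morphism of sequences to the original one that is the identity on $N$. Since each $P^i$ is projective, the sequence splits in every degree. Hence $E^i\cong N^i\oplus P^i$ is projective, so $E\in\chnproj$ and the sequence is admissible in $\chnproj$. It then remains to show $E\cong M\oplus P$.

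To identify $E$, I use the approximation property twice, since $M$ and $P$ both lie in $\chnproj$. The map $\alpha$ factors as $\alpha=p\alpha'$, and $\beta p$ factors as $\beta p=p\beta'$. This gives a map
\[
(\alpha',\beta')\colon M\oplus P\to P \quad\text{with}\quad p(\alpha',\beta')=(\alpha,\beta)(1_M\oplus p).
\]
By the universal property there is an induced morphism $\theta\colon M\oplus P\to E$ over $P$, and I would aim to prove that $\theta$ is an isomorphism. Since both sides have the same underlying projective modules in each degree, it suffices to show $\theta$ is degree-wise injective or degree-wise surjective, and a dimension count finishes.

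I expect this identification to be the main obstacle, because in general $\theta$ need not be injective: its kernel is controlled by $\ker\beta$ and by the failure of $p$ to be injective. To handle it, I would first normalise the original sequence using the Fitting decomposition of $\beta$ on the finite dimensional complex $Z$. Write $Z=Z'\oplus Z''$ with $\beta|_{Z''}$ an automorphism and $\beta|_{Z'}$ nilpotent. Then split off $Z''$ from both the middle and right terms, so that we may assume $\beta$ is nilpotent.

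With $\beta$ nilpotent, I would compare $E$ and $M\oplus P$ via their images in $M\oplus Z$, and argue with the nilpotency index that $\theta$ is surjective. If this fails, the fallback is to enlarge the approximation to $P\oplus Q$ with $Q\in\chnproj$ chosen to absorb the discrepancy, which is allowed since any $P\oplus Q\to Z$ extending $p$ by a map from $Q$ is still an approximation. I would then re-run the pullback, so that $E\cong M\oplus(P\oplus Q)$.
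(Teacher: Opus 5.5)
\textbf{Gap: the identification $E\cong M\oplus P$ is never proved.}
Everything up to the pullback is fine: $0\to N\to E\to P\to 0$ is exact and degree-wise split, and $E\in\chnproj$. But the step that carries the whole lemma, $E\cong M\oplus P$, is only stated as an aim (``argue with the nilpotency index that $\theta$ is surjective''), followed by an unspecified fallback. Nilpotency of $\beta$ is also not what controls $\theta$. Since $((0,0),y)\in E$ for every $y\in\ker p$, and $p$ is surjective, $\theta$ is surjective exactly when the lift $\beta'$ maps the subcomplex $\ker p$ onto itself. Nothing about $\beta$ being nilpotent gives you that. Your plan also uses no property of $p$ beyond being an approximation, and for non-minimal approximations $\theta$ can fail outright. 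For example, take $Z\in\chnproj$ and $p=(1_Z,0)\colon Z\oplus Q\to Z$ with $0\neq Q\in\chnproj$. Then $\alpha'=(\alpha,0)^T$ and $\beta'=\beta\oplus 0$ are legitimate lifts, and $0\oplus 0\oplus Q\subseteq\ker\theta$. So enlarging the approximation points in the wrong direction.

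\textbf{The missing idea: right minimality.}
Combine this with the fact that $\rho\colon E\to M\oplus Z$ is itself a right $\chnproj$-approximation: factor $Q\to M\oplus Z\to Z$ through $p$, then use the pullback property. Take $p$ right minimal, as the paper does; the approximation built from projective covers in \Cref{lem:proj_contravariantly_finite} is in fact right minimal, as one checks degree by degree. Then $1_M\oplus p$ is right minimal. Choose $\sigma\colon E\to M\oplus P$ with $(1_M\oplus p)\sigma=\rho$. Then $(1_M\oplus p)\sigma\theta=\rho\theta=1_M\oplus p$, so $\sigma\theta$ is an automorphism and $\theta$ is a split monomorphism. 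Your dimension count then makes $\theta$ an isomorphism, for every choice of lifts. Here $\dim E^i=\dim N^i+\dim P^i=\dim M^i+\dim P^i$, the last equality coming from the original sequence. No Fitting decomposition is needed. This is the paper's argument, phrased there as $E\cong M\oplus P\oplus C$ with $[C]=0$.

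\textbf{Alternative: finishing your Fitting reduction.}
Your Fitting reduction can also be completed without approximations. Once $\beta$ is nilpotent on $Z'$, take the long exact $\Ext$-sequence of $0\to N^i\to M^i\oplus (Z')^i\to (Z')^i\to 0$. Since $\Ext^1_\Lambda(M^i,X)=0=\Ext^1_\Lambda(N^i,X)$, precomposition with $\beta^i$ is a surjective nilpotent endomorphism of the finite-dimensional space $\Ext^1_\Lambda((Z')^i,X)$. That space must therefore vanish for all $X$. Hence $Z'\in\chnproj$, and $P=Z'$ already works.
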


\begin{proof}
We use that $\bchnproj$ is contravariantly finite in $\bchnmod$, see \Cref{lem:proj_contravariantly_finite}. Let $P \rightarrow Z$ be a right minimal $\chnproj$-approximation. We consider the short exact sequence obtained by the pullback of this approximation.
\[\begin{tikzcd}
	0 & N & E & P & 0 \\
	0 & N & {M \oplus Z} & Z & 0
	\arrow[from=1-1, to=1-2]
	\arrow[from=1-2, to=1-3]
	\arrow[from=1-2, to=2-2]
	\arrow[from=1-3, to=1-4]
	\arrow["\rho", from=1-3, to=2-3]
	\arrow["\lrcorner"{anchor=center, pos=0.125}, draw=none, from=1-3, to=2-4]
	\arrow[from=1-4, to=1-5]
	\arrow["\delta", from=1-4, to=2-4]
	\arrow[from=2-1, to=2-2]
	\arrow[from=2-2, to=2-3]
	\arrow[from=2-3, to=2-4]
	\arrow[from=2-4, to=2-5]
\end{tikzcd}\]
We will show that the pullback $E$ is isomorphic to $M \oplus P$. 

We first show that $\rho$ is a $\chnproj$-approximation. Let $Q \rightarrow M \oplus Z$ be a map from $Q \in \chnproj$. This map factors through $\rho$ in two steps, described in the diagram below.
\[\begin{tikzcd}
	Q & E & P \\
	& {M\oplus Z} & Z
	\arrow["{\exists \psi}", dashed, from=1-1, to=1-2]
	\arrow["{\exists \varphi}", curve={height=-24pt}, dashed, from=1-1, to=1-3]
	\arrow[from=1-1, to=2-2]
	\arrow[from=1-2, to=1-3]
	\arrow["\lrcorner"{anchor=center, pos=0.125}, draw=none, from=1-2, to=2-3]
	\arrow["\rho", from=1-2, to=2-2]
	\arrow["\delta", from=1-3, to=2-3]
	\arrow[from=2-2, to=2-3]
\end{tikzcd}\]
First, the compostion $Q \rightarrow M \oplus Z \rightarrow Z$ factors via $\varphi$ through $\delta$ since $\delta$ is a $\chnproj$-approximation. Using the universal property of the pullback $E$, we can then find a map $\psi$ showing that $Q \rightarrow M \oplus Z$ factors through $\rho$.

Next, we consider the map $M \oplus P \xrightarrow{\pmat{\id_M & \\ & \delta}} M \oplus Z$. This is a right minimal $\chnproj$-approximation, inheriting both traits from $\id_M$ and $\delta$ directly.

So $E \cong M \oplus P \oplus C$ where $C \in \chnproj$. In terms of the Grothendieck group, we can thus see that $[E] = [M] + [P] + [C]$. From the original short exact sequence involving $M$, we know that $[M] = [N]$, from the short exact sequence involving $E$, that $[E] = [N] + [P]$. Combining all three yields $[C] = 0$ and thus $C = 0$. So $E \cong M \oplus P$.
\end{proof}

Now we have all ingredients to prove our first main result.

\begin{proof}[Proof of \Cref{thm:deg_and_ses_in_chnproj}]
The implication (\ref{item:ses_2}) $\Rightarrow$ (\ref{item:deg}) was shown in \Cref{lem:analogue_Riedtmann}. For (\ref{item:ses_1}) $\Rightarrow$ (\ref{item:deg}), we can consider the dual short exact sequence in $\Lambda^\text{op}$ where we can apply \Cref{lem:analogue_Riedtmann} and obtain a degeneration for $\Lambda^\text{op}$. Geometrically, the duality translatres to an isomorphism of varieties thus yielding a degeneration for $\Lambda$. 

The implication (\ref{item:deg}) $\Rightarrow$ (\ref{item:ses_1}) follows from combining \Cref{lem:analogue_Zwara} with \Cref{lem:ses_chnmod_to_chnproj}. To show (\ref{item:deg}) $\Rightarrow$ (\ref{item:ses_2}), we use that the equivalence of $\chninj$ and $\chnproj$ by the Nakayama functor $\nu$ yields an isomorphism of varieties $\varP$ and the dually defined $\sC^{[-n,0]}_{\nu(\PP)}(\inj \Lambda)$. The dual of \Cref{lem:analogue_Zwara} with $\sC^{[-n,0]}_{\nu(\PP)}(\inj \Lambda)$ lets us find a short exact seuqence $0 \rightarrow Z \rightarrow M \oplus Z \rightarrow N \rightarrow 0$ with $Z \in \chnmod$. Using the $\Lambda^\text{op}$-duality, we can use \Cref{lem:ses_chnmod_to_chnproj} to replace the $Z$ by a complex in $\chnproj$.
\end{proof}

\begin{example}
\label{ex:trivial_deg_ses}
    We let $\Lambda$ be any finite dimensional algebra and reconsider \Cref{ex:trivial_deg}. We let $M = (P \xrightarrow{f} Q)$ and $N = (P \xrightarrow{0} Q)$. Recall that $M\leqdeg N$. For $Z = (P \xrightarrow{} 0)$, there is a short exact sequence in $\bchnproj$ of the form ${0 \rightarrow N \rightarrow M \oplus Z \rightarrow Z \rightarrow 0}.$ These also give the cover relations for $\Lambda = kA_1$ in \Cref{ex:A_1_chain_cx_deg}.
\end{example}

\subsection{The ext and hom order}

We can also define analogues of the ext and hom order for chain complex varieties.

\begin{definition}
\label{def:ext_order_ch_cx}
    Let $M,N \in \bchnproj$. We say $M \leqext N$ if there exist an integer $n \geq 1$ and a chain of $n$ short exact sequences $0 \rightarrow A_i \rightarrow B_i \rightarrow C_i \rightarrow 0$ with $B_1 = M$, for every $i \geq 2$, $B_i = A_{i-1} \oplus C_{i-1}$ and $N = A_n \oplus C_n$.
\end{definition}

\begin{remark}
    This is a well defined order on isomorphism classes of $\bchnproj$. The anti-symmetry is easiest seen by its relation to the degeneration order.
\end{remark}

\begin{definition}
\label{def:hom_order_ch_cx}
    Let $M,N \in \bchnproj$. We say $M \leqhom N$ if for any $C \in \bchnproj$, $\dim_k \Hom_{\bchnproj}(C,M) \leq \dim_k \Hom_{\bchnproj}(C,N)$.
\end{definition}

\begin{remark}
    This is a well defined order on isomorphism classes of $\bchnproj$ since two complexes have the same $\dim_k \Hom_{\bchnproj}(Z,-)$ for all $Z$ if and only if they are isomorphic, see \cite{B}.
\end{remark}

\begin{lemma}
\label{lem:ext_deg_hom_ch_cx}
    Let $M,N \in \bchnproj$. Then the following implications hold: 
    \begin{equation*}
        (M \leqext N) \Rightarrow (M \leqdeg N) \Rightarrow (M \leqhom N).
    \end{equation*}
\end{lemma}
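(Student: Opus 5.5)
For the first implication I will show that a single short exact sequence $0 \rightarrow A \rightarrow B \rightarrow C \rightarrow 0$ in $\bchnproj$ gives $B \leqdeg A \oplus C$, and then conclude using transitivity of $\leqdeg$. Transitivity holds because orbit closures are closed and $\GP$-stable: if $\sO_{N'} \subset \overline{\sO_M}$ then $\overline{\sO_{N'}} \subset \overline{\sO_M}$.

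For a single sequence, note first that $B$ and $A \oplus C$ have the same underlying modules, since the sequence is degree-wise split. I then apply \Cref{lem:analogue_Riedtmann} with $Z = A$, $M = B$ and $N = A \oplus C$, for which I need a short exact sequence $0 \rightarrow A \rightarrow B \oplus A \rightarrow A \oplus C \rightarrow 0$. I build it as the direct sum of the given sequence with the trivial sequence $0 \rightarrow 0 \rightarrow A \xrightarrow{\id} A \rightarrow 0$. This sum is still degree-wise split, so it lies in $\bchnproj$.

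A more direct alternative is also available. Choose degree-wise splittings, so that $d_B = \pmat{d_A & h \\ 0 & d_C}$. Conjugating by $\diag(t\cdot \id_A, \id_C)$ gives differentials $\pmat{d_A & th \\ 0 & d_C}$, which tend to $A\oplus C$ as $t \rightarrow 0$. This shows $B \leqdeg A \oplus C$ immediately.

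For the second implication, fix $C \in \bchnproj$. Since $M$ and $N$ live in the same $\varP$, I may enlarge the interval so that everything is supported on a common $[a,b]$. The set $\{X \in \varP \mid \dim_k \Hom(C,X) \geq r\}$ is closed. To see this, observe that $\Hom_{\bchnproj}(C,X)$ is the kernel of the linear map
\begin{equation*}
  \prod_i \Hom_\Lambda(C^i,P^i) \rightarrow \prod_i \Hom_\Lambda(C^i,P^{i+1}), \qquad (f^i) \mapsto (d_X^i f^i - f^{i+1} d_C^i).
\end{equation*}
This map depends polynomially (in fact linearly) on $X$, and its rank is lower semicontinuous, so $\dim$ of its kernel is upper semicontinuous. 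The set is also $\GP$-stable. Taking $r = \dim_k \Hom(C,M)$, the set contains $\sO_M$, hence its closure, hence $N$. This gives $\dim_k \Hom(C,M) \leq \dim_k \Hom(C,N)$.

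The main point needing care is the semicontinuity. One must set up the kernel description correctly, with domain and codomain fixed finite-dimensional spaces independent of $X$, and check that $M$ and $N$ are genuinely points of the same variety $\varP$.
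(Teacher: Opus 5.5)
Your proof is correct. For the first implication it is essentially the paper's argument.
\begin{itemize}
\item The paper adds $0 \rightarrow Z \xrightarrow{\id} Z \rightarrow 0 \rightarrow 0$ to land in the shape of condition (2) of \Cref{thm:deg_and_ses_in_chnproj}.
\item You add $0 \rightarrow 0 \rightarrow A \xrightarrow{\id} A \rightarrow 0$ to land in the shape of condition (3). This lets you cite \Cref{lem:analogue_Riedtmann} directly, without the duality step the paper needs for (2) $\Rightarrow$ (1).
\item Your family obtained by conjugating with $\diag(t\cdot\id_A,\id_C)$ bypasses even that lemma.
\item You spell out the transitivity that the paper leaves implicit when it restricts to covering relations.
\end{itemize}

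The genuine difference is in the second implication.
\begin{itemize}
\item The paper invokes the hard direction (1) $\Rightarrow$ (2) of \Cref{thm:deg_and_ses_in_chnproj}, which rests on \Cref{lem:geometry}, \Cref{lem:analogue_Zwara} and \Cref{lem:ses_chnmod_to_chnproj}. This gives $0 \rightarrow N \rightarrow M \oplus Z \rightarrow Z \rightarrow 0$, and the paper reads off $\dim_k \Hom(C,M) \leq \dim_k \Hom(C,N)$ from left exactness of $\Hom(C,-)$.
\item You instead use upper semicontinuity of $X \mapsto \dim_k \Hom(C,X)$ on $\varP$. You realise the Hom space as the kernel of a linear map between fixed finite-dimensional spaces whose entries depend affinely on $d_X$.
\end{itemize}

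Your argument is the classical geometric one. It is elementary and self-contained, needs none of the discrete valuation ring machinery, and so establishes the implication independently of the Zwara-type characterisation. The paper's argument is a two-line algebraic corollary once the theorem is available, and it shows how the exact-sequence description gets used. Logically, however, it depends on the deepest part of the section.
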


\begin{proof}
    This is analogous to the representatrion variety arguments. The first implication follows from \Cref{thm:deg_and_ses_in_chnproj}: Let us assume that $M \leqext N$ is a covering relation. So $M = Y$ and $N = X \oplus Z$ and there is a short exact sequence in $\bchnproj$ of the form $0 \rightarrow X \rightarrow Y \rightarrow Z \rightarrow 0$. Then we can add the short exact sequence $0 \rightarrow Z \xrightarrow{\id} Z \rightarrow 0 \rightarrow 0$ to obtain a short exact sequence of the form 
    \begin{equation*}
        0 \rightarrow \underbrace{X \oplus Z}_{N} \rightarrow \underbrace{Y}_M \oplus Z \rightarrow Z \rightarrow 0.
    \end{equation*}
    By \Cref{thm:deg_and_ses_in_chnproj}, $M \leqdeg N$.
    
    For the second implication, let $M \leqdeg N$. By \Cref{thm:deg_and_ses_in_chnproj}, there is a short exact sequence $0 \rightarrow N \rightarrow M \oplus Z \rightarrow Z \rightarrow 0$. For any $C \in \bchnproj$, this gives rise to the long exact sequence 
    \begin{equation*}
        0 \rightarrow \Hom(C, N) \rightarrow \Hom(C, M \oplus Z) \rightarrow \Hom(C, Z) \rightarrow \Ext(C,N) \rightarrow \ldots
    \end{equation*}
    In particular, we see that $\dim \Hom(C, M \oplus Z) \leq \dim \Hom(C, N) + \dim \Hom(C, Z)$. Thus it follows that $\dim \Hom(C, M) \leq \dim \Hom(C, N)$ and $M \leqhom N$.
\end{proof}

\begin{example}
\label{ex:A_1_ext_hom_order} 
    While these three orders do not generally coincide, they do for $\Lambda = kA_1$ in \Cref{ex:A_1_chain_cx_deg}. We recall that the degeneration order is generated by covering relations of the form $(k \xrightarrow{\id} k) \leqdeg (k \xrightarrow{0} k)$. Since there is a short exact sequence of the form $0 \rightarrow (0 \rightarrow k) \rightarrow (k \xrightarrow{\id} k) \rightarrow (k \rightarrow 0) \rightarrow 0$, we also have that $(k \xrightarrow{\id} k) \leqext (k \xrightarrow{0} k)$ and the two orders coincide.
    
    For the hom order, we use that we understand the hom spaces between indecomposables. We can show that for two complexes $X$ and $Y$, we have $X \leqhom Y$, if for each acyclic indecomposable, $X$ contains more copies of it than $Y$. This means that any hom order relation factors through the covering relations of the degeneration order and these two orders coincide.
\end{example}

\begin{remark}
    We will see an example where the ext and degeneration order are not identical at the end of \Cref{sect:Kb}.
\end{remark}

\begin{example}
    The degeneration and hom order are not in general identical. For representation varieties, an example is given in  \cite[Example 6.3]{Z_survey}. We are able to extend this to chain complexes in the following way. Let $\Lambda$ be a finite dimensional algebra given by the quiver $
    \begin{tikzcd}
    	1 & 2 & 3
    	\arrow["a"', shift right, from=1-2, to=1-1]
    	\arrow["b", shift left, from=1-2, to=1-1]
    	\arrow["c"', shift right, from=1-3, to=1-2]
    	\arrow["d", shift left, from=1-3, to=1-2]
    \end{tikzcd}$ and ideal generated by $da, cb, ca-cd$. We denote with $M = P_3$ the projective at vertex $3$ and $L = I_2$ the injective at vertex $2$. We let $S_2$ be the simple at vertex $2$. Further we define two modules, $U$ and $V$, as
    \begin{equation*}
        U \colon \begin{tikzcd}[ampersand replacement=\&]
        	k \& k \& 0,
        	\arrow["1"', shift right, from=1-2, to=1-1]
        	\arrow["0", shift left, from=1-2, to=1-1]
        	\arrow[shift right, from=1-3, to=1-2]
        	\arrow[shift left, from=1-3, to=1-2]
        \end{tikzcd}
        \qquad
        V \colon \begin{tikzcd}[ampersand replacement=\&]
        	0 \& k \& k.
        	\arrow[shift right, from=1-2, to=1-1]
        	\arrow[shift left, from=1-2, to=1-1]
        	\arrow["1"', shift right, from=1-3, to=1-2]
        	\arrow["1", shift left, from=1-3, to=1-2]
        \end{tikzcd}
    \end{equation*}
    Zwara shows that $M \leqhom U \oplus V$ in the following way. There are short exact sequences in $\mod \Lambda$ of the form $0 \rightarrow L \rightarrow M \oplus S_2 \rightarrow V \rightarrow 0$ and $0 \rightarrow U \rightarrow L \rightarrow S_2 \rightarrow 0$. So $M \oplus S_2 \leqext L \oplus V$ and $L \leqext U \oplus S_2$. By transitivity, we obtain that $M \oplus S_2 \leqext U \oplus V \oplus S_2$. Since the ext order implies the hom order, $M \oplus S_2 \leqhom U \oplus V \oplus S_2$ and we may cancel $S_2$ on both sides to show that $M \leqhom U \oplus V$. However, a geometric argument shows that $M$ does not degenerate to $U \oplus V$.
    
    This argument can be lifted to chain complexes. We let $P_X$ be the minimal projective resolution of a module $X$. By the horseshoe lemma, the above short exact sequences in $\mod \Lambda$ give rise to short exact sequences in $\bchnproj$ of the form
    $0 \rightarrow P_L \rightarrow P_M \oplus P_{S_2} \oplus C_1 \rightarrow P_V \rightarrow 0$ and $0 \rightarrow P_U \rightarrow P_L \oplus C_2 \rightarrow P_{S_2} \rightarrow 0$
    where $C_1$ and $C_2$ are acyclic complexes. Thus the same argument as before will give us that $P_M \oplus C_1 \oplus C_2 \leqhom P_U \oplus P_V$. It remains to show that $P_U \oplus P_V$ is not a degeneration of $P_M \oplus C_1 \oplus C_2$. 
    
    We do this geometrically. We know that $P_M = (P_3)$ concentrated in degree 0, $P_U = (P_1 \xrightarrow{b} P_2)$ in degree $-1$ and $0$ and $P_V = (P_1 \xrightarrow{a+b} P_2 \xrightarrow{c-d} P_3)$ in degrees $-2$ to $0$  where $a,b,c,d$ denote the morphisms induced by the arrows of the same name.
    We denote $P_M \oplus C_1 \oplus C_2$ by $\widetilde{P_M}$. Since it lies in the same variety $\varP$ as $P_U \oplus P_V$, it is up to isomorphism of the form
    \begin{equation*}
        \widetilde{P_M} = (P_M \oplus C_1 \oplus C_2) = (P_1 \xrightarrow{\pmat{1 \\0}} P_1 \oplus P_2 \xrightarrow{\pmat{0 & 1 \\ 0 & 0}} P_2 \oplus P_3).
    \end{equation*}
    An element $g$ in $\GP$ is of the form
    \begin{equation*}
        (\lambda, \pmat{\mu_1 & 0 \\ \psi & \mu_2}, \pmat{\eta_1 & 0 \\ \varphi & \eta_2})
    \end{equation*}
    where $\lambda, \mu_1, \mu_2, \eta_1, \eta_2 \in k^*$, $\varphi = \varphi_1 c + \varphi_2 d$ and $\psi = \psi_1 a + \psi_2 b$. Then the elements in the orbit of $\widetilde{P_M}$ are of the form
    \begin{equation*}
        g.\widetilde{P_M} = (
        P_1 \xrightarrow{\lambda^{-1} \pmat{\mu_1 \\ \psi}} P_1 \oplus P_2 \xrightarrow{\pmat{-\eta_1\mu_1^{-1}\mu_2^{-1}\psi\quad & \eta_1\mu_2^{-1} \\-\mu_1^{-1}\mu_2^{-1}\varphi\psi & \mu_2^{-1}\varphi}} P_2 \oplus P_3 ).
    \end{equation*}
    In comparison, $P_U \oplus P_V$ is given as
    \begin{equation*}
        P_1 \xrightarrow{\pmat{0 \\ a+b}} P_1 \oplus P_2 \xrightarrow{\pmat{b & 0 \\0 & c-d}} P_2 \oplus P_3.
    \end{equation*}
    The problem lies with the $2\times 2$-matrix. While closing the orbit of $\widetilde{P_M}$ does allow the non-zero entry $\eta_1\mu_2^{-1}$ to become zero, this implies at least one of the top left and bottom right entries also become $0$. So $P_U \oplus P_V$ is not contained in the orbit closure of $\widetilde{P_M}$ and thus there is no degeneration.
\end{example}

\subsection{Other examples}
\label{sect:examples}

We discuss some additional examples to show interesting behaviours not exhibited by $\Lambda = kA_1$.

\begin{example}
\label{ex:A_2_preproj}
    Let $\Lambda = kQ/I$ be the preprojective algebra of type $A_2$ where $Q = 
    \begin{tikzcd}
    	1 & 2
    	\arrow["a", shift left, from=1-1, to=1-2]
    	\arrow["{a^*}", shift left, from=1-2, to=1-1]
    \end{tikzcd}$ and $I = \langle aa^*,a^*a \rangle$.
    There are two projective indecomposables, $P_1 = \Lambda e_1$ and $P_2 = \Lambda e_2$. Note that the homomorphism spaces between $P_1$ and $P_2$ are one dimensional. We let $f \colon P_1 \rightarrow P_2$ and $g \colon P_2 \rightarrow P_1$ be non-zero morphisms. We define $\PP = (P_1\oplus P_2, P_1 \oplus P_2)$ supported in degree $-1$ and $0$. Then 
    \begin{equation*}
        \varP = \End(P_1\oplus P_2) \cong \left\{ \pmat{a \amsamp b \\ c \amsamp d} \mid a,b,c,d \in k \right\}
    \end{equation*}
    and 
    \begin{equation*}
        \GP = \Aut(P_1\oplus P_2)^2 \cong \left\{ \pmat{a \amsamp b \\ c \amsamp d} \mid a,b,c,d \in k, ad \neq 0 \right\}^2.
    \end{equation*}
    The group structure on $\GP$ is not the usual matrix multiplications since $f \circ g = g \circ f = 0$. Instead the multiplication is defined as
    \begin{equation*}
        \pmat{a \amsamp b \\ c \amsamp d} \circ \pmat{a' \amsamp b' \\ c' \amsamp d'} = \pmat{aa' \amsamp ab'+bd' \\ ca'+dc' \amsamp dd'}.
    \end{equation*}
    This multiplication is also the one used when conjugating for the group action. There are seven orbits represented by, respectively, $\pmat{0 \amsamp 0 \\ 0 \amsamp 0}$, $\pmat{0 \amsamp 1 \\ 0 \amsamp 0}$, $\pmat{0 \amsamp 0 \\ 1 \amsamp 0}$, $\pmat{0 \amsamp 1 \\ 1 \amsamp 0}$, $\pmat{1 \amsamp 0 \\ 0 \amsamp 0}$, $\pmat{0 \amsamp 0 \\ 0 \amsamp 1}$ and $\pmat{1 \amsamp 0 \\ 0 \amsamp 1}$.

    The poset of degenerations of $\varP$ is the following:
    \[\begin{tikzcd}[sep=small]
    	& \begin{array}{c} \pmatquiver{0 \amsamp 0 \\ 0 \amsamp 0} \end{array} & \\
    	\begin{array}{c} \pmatquiver{0 \amsamp 0 \\ 1 \amsamp 0} \end{array} && \begin{array}{c} \pmatquiver{0 \amsamp 1 \\ 0 \amsamp 0} \end{array} \\
    	& \begin{array}{c} \pmatquiver{0 \amsamp 1 \\ 1 \amsamp 0} \end{array} \\
    	\begin{array}{c} \pmatquiver{1 \amsamp 0 \\ 0 \amsamp 0} \end{array} && \begin{array}{c} \pmatquiver{0 \amsamp 0 \\ 0 \amsamp 1} \end{array} \\
    	& \begin{array}{c} \pmatquiver{1 \amsamp 0 \\ 0 \amsamp 1} \end{array}
    	\arrow[no head, from=1-2, to=2-1]
    	\arrow[no head, from=1-2, to=2-3]
    	\arrow[no head, from=2-1, to=3-2]
    	\arrow[no head, from=2-3, to=3-2]
    	\arrow[no head, from=3-2, to=4-1]
    	\arrow[no head, from=3-2, to=4-3]
    	\arrow[no head, from=4-1, to=5-2]
    	\arrow[no head, from=4-3, to=5-2]
    \end{tikzcd}\]
\end{example}

\begin{example}
\label{ex:A_2}
    Let $\Lambda = k(\begin{tikzcd}
        1 & 2
        \arrow["a", from=1-1, to=1-2]
    \end{tikzcd})$. There are two projective indecomposables, $P_1 = \Lambda e_1$ and $P_2 = \Lambda e_2$. Note that there is a non-zero morphism from $P_2$ to $P_1$. Let $m \geq 1$. We define $\PP_m = (P_1^m\oplus P_2^m, P_1^m \oplus P_2^m)$ supported in degree $-1$ and $0$. Then 
    \begin{equation*}
        \varP = \End(P_1^m\oplus P_2^m) \cong \left\{ \pmat{A \amsamp B \\ 0 \amsamp C} \mid A,B,C \in \Mat_{m \times m}(k) \right\}
    \end{equation*}
    and 
    \begin{equation*}
        \GP = \Aut(P_1^m\oplus P_2^m)^2 \cong \left\{ \pmat{A \amsamp B \\ 0 \amsamp C} \mid A,B \in \GL_m(k), C \in \Mat_{m \times m}(k) \right\}^2.
    \end{equation*}
    acting via two-sided conjugation by the usual matrix multiplication.
    The orbits are characterised by the cohomology of the associated complexes. There are three types of indecomposable complexes: the indecomposable stalks, the indecomposable acyclics and $(P_2 \xrightarrow{f} P_1)$. The indecomposable stalks have themselves as cohomology. The acyclics have a zero cohomology and $(P_2 \xrightarrow{f} P_1)$ has a cohomology concentrated in degree $0$ which is the simple at $1$. Clearly, the cohomology completely determines the orbit. In particular, for a complex $X$ in $\varP$, the cohomology is going to be of the form $H^*(X) = (P_1^a \oplus P_2^b, P_1^c \oplus P_2^d \oplus S_1^e)$ where the $(a,b,c,d,e)$ are unique for a given isomorphism class. The entries $a$ to $d$ correspond to the multiplicities of the corresponding stalk complexes, the entry $e$ to the multiplicity of $(P_2 \xrightarrow{f} P_1)$. Since this implicitly also encodes the acyclics, we see that $b+e = d$ and $c+e = a$. Thus the orbits are characterised by tuples $(b,c,e)\in \bbN^3$ satisfying $b+ e \leq m$ and $c+e \leq m$. Such a tuple $(b,c,e)$ corresponds to the orbit of 
    \begin{equation*}
    \scriptstyle
        P_1[-1]^{c+e} \oplus P_1[0]^{c} \oplus P_2[-1]^{b} \oplus P_2[0]^{b+e} \oplus (P_2 \xrightarrow{f} P_1)^e \oplus (P_1 \xrightarrow{\id} P_1)^{m-c-e}  \oplus (P_2 \xrightarrow{\id} P_2)^{m-b-e}.
    \end{equation*}
    In particular, the orbits are enumerated by the square pyramidal numbers\footnote{\url{https://oeis.org/A000330}} starting at $5$ for $m=1$.

    We let $m = 1$. Then the degeneration order is given as
    \[\begin{tikzcd}[sep=small]
    	& {(1,0,1)} &&&& \begin{array}{c} \pmatquiver{0 \amsamp 0 \\0 \amsamp 0} \end{array} & \\
    	& {(0,1,0)} &&&& \begin{array}{c} \pmatquiver{0 \amsamp 1 \\0 \amsamp 0} \end{array} \\
    	{(0,0,1)} && {(1,0,0)} & \longleftrightarrow & \begin{array}{c} \pmatquiver{1 \amsamp 0 \\0 \amsamp 0} \end{array} && \begin{array}{c} \pmatquiver{0 \amsamp 0 \\0 \amsamp 1} \end{array} \\
    	& {(0,0,0)} &&&& \begin{array}{c} \pmatquiver{1 \amsamp 0 \\0 \amsamp 1} \end{array}
    	\arrow[no head, from=1-2, to=2-2]
    	\arrow[no head, from=1-6, to=2-6]
    	\arrow[no head, from=2-2, to=3-1]
    	\arrow[no head, from=2-2, to=3-3]
    	\arrow[no head, from=2-6, to=3-5]
    	\arrow[no head, from=2-6, to=3-7]
    	\arrow[no head, from=3-1, to=4-2]
    	\arrow[no head, from=3-3, to=4-2]
    	\arrow[no head, from=3-5, to=4-6]
    	\arrow[no head, from=3-7, to=4-6]
    \end{tikzcd}\]
    There are two types of covering degenerations. Those given by \Cref{ex:trivial_deg} and those generated by $M \leqdeg N$ with $M = (P_1 \xrightarrow{\id} P_1) \oplus (0 \xrightarrow{} P_2) $ and $N = (P_1 \xrightarrow{f} P_2) \oplus (0 \xrightarrow{} P_1)$. Note that the dual degeneration with $P_1$ and $P_2$ swapped also exists. We can describe these degenerations via short exact sequences as well. We let $Z = P_1 \xrightarrow{} 0$. Then there is a short exact sequence in $\bchnproj$ of the form $ 0 \rightarrow N \rightarrow M \oplus Z \rightarrow Z \rightarrow 0$.
    
    For $m = 2$, we have the following degeneration order:
    \[\begin{tikzcd}[sep=small]
    	&& {(2,0,2)} && \\
    	&& {(1,1,1)} \\
    	{(1,0,2)} && {(0,2,0)} && {(2,0,1)} \\
    	& {(0,1,1)} && {(1,1,0)} \\
    	{(0,0,2)} && {(1,0,1)} && {(2,0,0)} \\
    	&& {(0,1,0)} \\
    	& {(0,0,1)} && {(1,0,0)} \\
    	&& {(0,0,0)}
    	\arrow[no head, from=1-3, to=2-3]
    	\arrow[no head, from=2-3, to=3-1]
    	\arrow[no head, from=2-3, to=3-3]
    	\arrow[no head, from=2-3, to=3-5]
    	\arrow[no head, from=3-1, to=4-2]
    	\arrow[no head, from=3-3, to=4-2]
    	\arrow[no head, from=3-3, to=4-4]
    	\arrow[no head, from=3-5, to=4-4]
    	\arrow[no head, from=4-2, to=5-1]
    	\arrow[no head, from=4-2, to=5-3]
    	\arrow[no head, from=4-4, to=5-3]
    	\arrow[no head, from=4-4, to=5-5]
    	\arrow[no head, from=5-1, to=7-2]
    	\arrow[no head, from=5-3, to=6-3]
    	\arrow[no head, from=5-5, to=7-4]
    	\arrow[no head, from=6-3, to=7-2]
    	\arrow[no head, from=6-3, to=7-4]
    	\arrow[no head, from=7-2, to=8-3]
    	\arrow[no head, from=7-4, to=8-3]
    \end{tikzcd}\]
    We remark that the poset for $m=1$ appears a subposet at the bottom.
\end{example}

\biblio

\section{The homotopy category}
\label{sect:Kb}

In this section, we extend our constructions and results so far to consider complexes up to homotopy. We let 
\begin{equation*}
    \bhomproj = \bchnproj/\{\text{null-homotopic maps}\}
\end{equation*}
be the homotopy category of bounded chain complexes whose underlying modules are finite-dimensional projectives. This is a triangulated category with distinguished triangles of the form
\begin{equation*}
    X \xrightarrow{f} Y \rightarrow C(f) \rightarrow X[1]
\end{equation*}
where $C(f)$ denotes the cone of $f$. Note that 
\begin{equation*}
    0 \rightarrow Y \rightarrow C(f) \rightarrow X[1] \rightarrow 0.
\end{equation*}
is a short exact sequence in $\bchnproj$. This implies the following well-known lemma.
\begin{lemma}
\label{lem:triangles_and_ses}
Let $X \xrightarrow{f} Y \rightarrow Z \rightarrow X[1]$ be a distinguished triangle in $\bhomproj$. Then there is an isomorphic triangle of the form $X \xrightarrow{f} Y' \rightarrow Z \rightarrow X[1]$ where $0 \rightarrow X \rightarrow Y' \rightarrow Z \rightarrow 0$ is a short exact sequence in $\bchnproj$.

Moreover, every short exact sequence $0 \rightarrow X \rightarrow Y \rightarrow Z \rightarrow 0$ in $\bchnproj$ induces a triangle in $\bhomproj$.
\end{lemma}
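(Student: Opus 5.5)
We prove the two parts separately; the second is a direct consequence of the standard cone construction, and the first reduces to it.

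\emph{Second part.} Let $0 \rightarrow X \xrightarrow{u} Y \xrightarrow{v} Z \rightarrow 0$ be a short exact sequence in $\bchnproj$. It is degree-wise split, so choose sections $s^i \colon Z^i \rightarrow Y^i$ of $v^i$ and retractions $r^i \colon Y^i \rightarrow X^i$ of $u^i$ with $u^ir^i + s^iv^i = \id$. Then $Y^i \cong X^i \oplus Z^i$ as modules, and the differential of $Y$ takes the upper triangular form $\pmat{d_X & h \\ 0 & d_Z}$ with $h^i = r^{i+1} d_Y^i s^i \colon Z^i \rightarrow X^{i+1}$. The condition $d_Y^2=0$ gives $d_X h + h d_Z = 0$, so $h$ (up to the sign conventions for $[1]$) is a chain map $Z \rightarrow X[1]$. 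Hence $Y$ is isomorphic to the complex $C(Z[-1] \xrightarrow{\pm h[-1]} X)$ with its standard differential. The standard triangle $Z[-1] \rightarrow X \rightarrow C(h[-1]) \rightarrow Z$ is distinguished, and rotating it once gives $X \xrightarrow{u} Y \xrightarrow{v} Z \xrightarrow{h} X[1]$. To justify this, check that under $Y \cong C(h[-1])$ the maps $X \rightarrow C$ and $C \rightarrow Z$ are the canonical inclusion and projection, which agree with $u$ and $v$.

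\emph{First part.} Let $X \xrightarrow{f} Y \rightarrow Z \rightarrow X[1]$ be distinguished. By the definition of the triangulated structure and the uniqueness of cones up to isomorphism of triangles, it is isomorphic to $X \xrightarrow{f} Y \rightarrow C(f) \rightarrow X[1]$ via an isomorphism $\theta \colon C(f) \rightarrow Z$ that is the identity on $X$ and $Y$. We now replace $Y$ by the mapping cylinder
\begin{equation*}
Y' = \mathrm{Cyl}(f) = X \oplus X[1] \oplus Y
\end{equation*}
in each degree, with its usual differential. The following facts hold:
\begin{enumerate}
\item the inclusion $X \rightarrow Y'$ is a degree-wise split monomorphism of complexes whose cokernel is exactly $C(f)$, so $0 \rightarrow X \rightarrow Y' \rightarrow C(f) \rightarrow 0$ is an admissible short exact sequence in $\bchnproj$;
\item the projection $\beta \colon Y' \rightarrow Y$ is a homotopy equivalence, with homotopy inverse the inclusion of $Y$;
\item $\beta$ composed with $X \rightarrow Y'$ equals $f$;
\item the map $Y' \rightarrow C(f)$ is homotopic to the composite $Y' \xrightarrow{\beta} Y \rightarrow C(f)$.
\end{enumerate}
Composing the short exact sequence in (1) with $\theta$ gives $0 \rightarrow X \rightarrow Y' \rightarrow Z \rightarrow 0$ in $\bchnproj$. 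By (2)--(4), the triple $(\id_X, \beta, \theta)$ is an isomorphism of triangles in $\bhomproj$ from $X \rightarrow Y' \rightarrow Z \rightarrow X[1]$ to the original triangle. The connecting morphism $Z \rightarrow X[1]$ of this new triangle is the one induced by the short exact sequence via the second part, so it is consistent with the original.

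The main obstacle is the bookkeeping of signs and homotopies: the cylinder differential, the commutativity up to homotopy in (4), and compatibility of the third morphisms. The cleanest route is to invoke the standard cylinder/cone lemma (e.g. Weibel, Lemma 1.5.8 and §10.1) rather than recompute it. All complexes constructed are bounded with projective terms, since they are finite direct sums of shifts of $X$ and $Y$, so everything stays inside $\bchnproj$.
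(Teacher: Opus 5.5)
\textbf{There is a genuine gap in the first part.} Your second part is correct, and so are the cylinder facts (1)--(4). The gap is the step ``composing the short exact sequence in (1) with $\theta$ gives $0\to X\to Y'\to Z\to 0$ in $\bchnproj$.''

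The comparison map $\theta\colon C(f)\to Z$ is only an isomorphism in $\bhomproj$, i.e.\ a homotopy equivalence, not an isomorphism of complexes. So the chain map $\theta\pi\colon \mathrm{Cyl}(f)\to Z$ need not be degree-wise surjective, and its kernel need not be $X$. Two examples:
\begin{itemize}
\item For the distinguished triangle $X\xrightarrow{\id}X\to 0\to X[1]$, the cone $C(\id_X)$ is contractible and $\theta\colon C(\id_X)\to 0$. Then $\mathrm{Cyl}(\id_X)\to 0$ has kernel $\mathrm{Cyl}(\id_X)\neq X$.
\item If $Z=C(f)\oplus P_{iq}$ and $\theta$ is the inclusion, then $\theta\pi$ is not even surjective.
\end{itemize}
What your construction actually proves is weaker: the triangle is isomorphic to one coming from the degree-wise split sequence $0\to X\to\mathrm{Cyl}(f)\to C(f)\to 0$, whose third term is only homotopy equivalent to $Z$. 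The lemma asks for the actual complex $Z$. This matters downstream: in the proof of \Cref{thm:deg_traingles_homotopy}, (2)$\Rightarrow$(1), the third term must be literally the complex $Z$ that also appears as a summand of the middle term.

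\textbf{A fix.} Place $Z$ as the shifted source of a cone rather than as the cone itself. This is essentially what the paper's remark that $0\to Y\to C(f)\to X[1]\to 0$ is short exact points to.
\begin{itemize}
\item Choose a chain map representing the third morphism $h\colon Z\to X[1]$.
\item Rotate backwards to the distinguished triangle $Z[-1]\xrightarrow{-h[-1]}X\xrightarrow{f}Y\to Z$.
\item Set $Y'=C(-h[-1])$. Then $0\to X\to Y'\to Z[-1][1]=Z\to 0$ is degree-wise split with exactly $X$ and $Z$ at its ends. This is your second-part computation run in reverse.
\item TR3 and the five lemma give an isomorphism $Y\cong Y'$ in $\bhomproj$ compatible with $\id_X$ and $\id_Z$. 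Rotating forward then gives the required isomorphism of triangles.
\end{itemize}
If you prefer to keep the cylinder, you can instead pull back $0\to X\to\mathrm{Cyl}(f)\to C(f)\to 0$ along a chain map $Z\to C(f)$ representing $\theta^{-1}$. The pullback is again degree-wise split, with kernel $X$ and third term $Z$. You would still have to verify that the induced triangle is isomorphic to the original one.
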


\subsection{Ind-varieties of the homotopy category}
\label{subsect:var_of_Kb}

We first need to introduce the right structure to go from varieties of chain complexes to a notion modelling the homotopy category.

\subsubsection{Ind-varieties}
\label{subsec:ind_var}

We will consider some colimits of varieties, called ind-varieties. We follow the lecture notes \cite{Ric}. A similar definition of ind-varieties is discussed in \cite{BD}. The colimits we look at are filtered but not necessarily linear. However, colimits over $\bbN$ provide some nice examples, as in \cite{FK}. For more details on filterd colimits in any category, we refer to \cite{ML}.

Recall that a scheme $X$ can be interpreted as a functor via the Yoneda embedding to its functor of points $\Hom(-,X) \colon \AffSch^{\text{op}} \rightarrow \Sets$ where $\AffSch$ is the category of affine schemes and $Sets$ the category of sets.

\begin{definition}
\label{def:ind_scheme}
A strict $\aleph_0$-ind-scheme is a functor $X \colon \AffSch^{\text{op}} \rightarrow \Sets$ such that $X \cong \colim_{i \in I} X_i$ is a filtered colimit where $I$ is countable, $X_i$ is a scheme and for all $i < j$, the transition map $\varphi_{ij} \colon X_i \rightarrow X_j$ is a closed immersion. We denote the morphism into the colimit as $\varphi_i \colon X_i \rightarrow X$.

It is called an (affine) ind-variety if every $X_i$ is an (affine) variety.
\end{definition}

\begin{remark}
\label{rmk:ind_countable}
    Since $I$ is countable and the colimit is filtered, $X$ can also be expressed as a colimit over a suitable linear order, see \cite{Ric}. 
\end{remark}

\begin{example}[\cite{Ric}]
\label{ex:A_infty}
    An example for an affine ind-variety is $\bbA^\infty = \colim_{n\in \bbN} \bbA^n$ where $\varphi_{n, n+1} \colon \bbA^n \rightarrow \bbA^{n+1}, (x_1, \ldots, x_n) \mapsto (x_1, \ldots, x_n, 0)$. 
\end{example}

\begin{remark}
\label{rmk:ind_pts}
    Let $X$ be an ind-variety. For any point $x \in X$, there is some $i \in I$ and $x_i \in X_i$ such that $\varphi_i(x_i) = x$.
\end{remark}

The topology of an ind-scheme $X$ can be understood in terms of the $X_i$, as shown in \cite[Lemma 1.12]{Ric}.
\begin{lemma}
\label{lem:colimit_topology}  
    Any ind-scheme $X$ carries the colimit topology, i.e. $U \subset X$ is closed (resp. open) if and only if for every $i \in I$, $\varphi_i^{-1}(U)$ is closed (resp. open) in $X_i$.
\end{lemma}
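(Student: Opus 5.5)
The plan is to work directly with the functor-of-points description of $X$ as a filtered colimit of schemes $X_i$ with closed-immersion transition maps. The topology on $X$ is by definition the one whose closed sets are the subsets arising from closed sub-ind-schemes, or, equivalently for our purposes, the final topology making all $\varphi_i$ continuous. So the task is to show that these two descriptions agree: $U \subset X$ is closed if and only if each $\varphi_i^{-1}(U)$ is closed in $X_i$. The open case will follow formally by taking complements, since $\varphi_i^{-1}(X\setminus U) = X_i \setminus \varphi_i^{-1}(U)$.

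For the forward direction, I will use that each $\varphi_i$ is continuous. This holds because a closed sub-ind-scheme $V \subset X$ pulls back along $\varphi_i$ to a closed subscheme $V \times_X X_i$ of $X_i$. This pullback is computed as a colimit over $j \geq i$ of $V_j \cap \varphi_{ij}(X_i)$. Since the transition maps are closed immersions, the colimit stabilises at the closed subscheme $\varphi_{ij}^{-1}(V_j)$, and these closed subschemes are compatible in $j$.

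For the converse, suppose every $\varphi_i^{-1}(U)$ is closed. I will give each $U_i := \varphi_i^{-1}(U)$ its reduced closed subscheme structure. Then I will check that $\varphi_{ij}$ restricts to a closed immersion $U_i \to U_j$. The key point here is that $\varphi_{ij}^{-1}(U_j) = U_i$ as sets, because $\varphi_i = \varphi_j \circ \varphi_{ij}$. Setting $U' = \colim_i U_i$ then gives a closed sub-ind-scheme of $X$ whose underlying set is $U$; for the last claim I use \Cref{rmk:ind_pts}, which says every point of $X$ comes from some $X_i$.

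The main obstacle is the bookkeeping for the reduced structures. Specifically, one must check that the reduced structure on $U_i$ coincides with the reduced induced structure on $\varphi_{ij}^{-1}(U_j)$, so that the system is well defined. This should hold because a closed immersion identifies $X_i$ homeomorphically with a closed subset of $X_j$. I would also appeal to \Cref{rmk:ind_countable} to reduce to a linear index set, so that the colimit topology is just the increasing union topology, which simplifies the argument. Since this is exactly \cite[Lemma 1.12]{Ric}, in practice I would cite it and only sketch the steps above.
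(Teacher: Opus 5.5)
The paper gives no argument of its own here. It cites \cite[Lemma 1.12]{Ric} and in effect takes the colimit topology on the underlying set of $X$ as \emph{the} topology, so your fallback of citing agrees with it. The problem is the sketch you put in front of the citation, which rests on a false equivalence between that topology and the one defined by closed sub-ind-schemes.

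In your forward direction, ``closed sub-ind-scheme'' means a closed subfunctor: $V\times_X X_i\to X_i$ is a closed immersion for all $i$. Equivalently, the $V_i:=V\times_X X_i$ form a \emph{cartesian} system, $V_j\times_{X_j}X_i=V_i$ scheme-theoretically. That is the only reason anything ``stabilises''. It does not follow from the transition maps being closed immersions, and for a merely compatible system of closed $V_j\subset X_j$ the chain $V_j\times_{X_j}X_i$ need not stabilise.

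In the converse, your reduced structures $U_i$ are compatible but not cartesian. The scheme-theoretic preimage $U_j\times_{X_j}X_i$ of a reduced $U_j$ is in general a non-reduced thickening of $U_i$. Hence $U'\times_X X_i=\colim_{j\geq i}U_j\times_{X_j}X_i$ can be an infinite increasing union of thickenings, and $U'\to X$ is then only a monomorphism, not a closed immersion. The point you flag as the main obstacle (reduced structure on $U_i$ versus reduced induced structure on $\varphi_{ij}^{-1}(U_j)$) is trivially fine; the difficulty is elsewhere.

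This genuinely fails, even for ind-varieties. Take $X=\bbA^\infty$ as in \Cref{ex:A_infty}. For $n\geq 2$ let $C_n\subset\bbA^n$ be the curve $\{(t,0,\ldots,0,t^n)\}$, and set $U=\bigcup_{n\geq 2}C_n$. Then $U\cap\bbA^m=C_2\cup\cdots\cup C_m$ is closed for every $m\geq 2$, and $U\cap\bbA^1=\{0\}$, so $U$ is closed in the colimit topology. Now suppose $V\subset X$ were a closed subfunctor with underlying set $U$. Then $V\times_X\bbA^1=\operatorname{Spec}k[x_1]/(x_1^a)$ for some fixed $a$. But $V\times_X\bbA^n$ contains the reduced curve $C_n$, so $V\times_X\bbA^1\supseteq C_n\times_{\bbA^n}\bbA^1=\operatorname{Spec}k[x_1]/(x_1^n)$ for every $n$, a contradiction. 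So closed sub-ind-schemes define a strictly coarser topology, and your converse cannot be completed.

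There are two ways to repair the sketch. First, you can compare with \emph{open} sub-ind-schemes instead. An open subscheme is determined by its underlying set, so if every $\varphi_i^{-1}(W)$ is open, the open subschemes $W_i\subset X_i$ on these sets automatically satisfy $W_j\times_{X_j}X_i=W_i$. They therefore glue to an open subfunctor, and closed sets are just complements. Second, you can take $\colim_i|X_i|$ as the definition of the topology, as the paper effectively does, after which the statement is essentially tautological.
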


\begin{definition}
\label{def:ind-morphism}
    Let $X \cong \colim_{i\in I} X_i$ and $Y \cong \colim_{j \in J} Y_j$ be two ind schemes. We call the morphisms in the category of functors from $\AffSch^{\text{op}}$ to $\Sets$ from $X$ to $Y$ ind-morphisms. Any such morphism can be written (up to changement of $I$) as a system of morphisms of schemes $f_{ij}\colon X_i \rightarrow Y_j$, i.e. for all $i<i'$, $j<j'$, $f_{i'j'}\varphi_{ii'} = \varphi_{jj'}f_{ij}$. In particular, such a system gives rise to an ind-morphism.
\end{definition}

\begin{remark}
\label{rmk:ind_products}  
    The product of two ind-schemes $X \cong \colim_{i\in I} X_i$ and $Y \cong \colim_{just \in J} Y_j$ is generally defined as $X \times Y \cong \colim_{(i,j)\in I \times J} X_i \times Y_j$. If $I = J$ then we can use the fact that $I \cong \{(i,i) \mid i \in I\} \subset I \times I$ is final to show that $X \times Y \cong \colim_{i\in I} X_i \times Y_i$.
\end{remark}

\begin{definition}
\label{def:ind_group}
    An ind-group is an ind-variety $G$ with an internal group structure such that $m \colon G \times G \rightarrow G, (g,h) \mapsto gh$ and $i \colon G \rightarrow G, g \mapsto g^{-1}$ are ind-morphisms.
\end{definition}

\begin{example}
\label{ex:ind-group}
    Let $G = \colim_{i \in I} G_i$ be an ind-variety where $G_i$ is an algebraic group and the inclusion $G_{i} \subset G_{j}$ an injection of groups for every $i<j$. Then $G$ is an ind-group. 
    A concrete example of this structure is $\GL_\infty(k) = \colim_{n \in \bbN} \GL_n(k)$ where $\varphi_{n, n+1} \colon \GL_n(k) \rightarrow \GL_{n+1}(k), f \mapsto \pmat{f \amsamp 0 \\ 0 \amsamp 1}$.
\end{example}

\begin{definition}
\label{def:ind_action}
    An action of an ind-group $G$ on an ind-variety $X$ is a homomorphism $\rho \colon G \rightarrow \Aut(X)$ such that $G \times X \rightarrow X, (g,x) \mapsto g.x := \rho(g)x$ is an ind-morphism.
\end{definition}

\begin{example}
\label{ex:ind-action}
    Let $G$ be as in \Cref{ex:ind-group} and let $X \cong \colim_{i\in I} X_i$ be an ind-variety. Let us assume that for every $i \in I$ there is a $G_i$-action on $X_i$, given by $\rho_i \colon G_i \times X_i \rightarrow X_i$ compatible with the colimit structure, i.e. such that for every $i < j$ the following diagram commutes:
    \[\begin{tikzcd}
    	{G_i \times X_i} & {X_i} \\
    	{G_{j} \times X_{j}} & {X_{j}}
    	\arrow["{\rho_i}", from=1-1, to=1-2]
    	\arrow["{\varphi_{ij} \times \varphi_{ij}}"', from=1-1, to=2-1]
    	\arrow["{\varphi_{ij}}", from=1-2, to=2-2]
    	\arrow["{\rho_{j}}"', from=2-1, to=2-2]
    \end{tikzcd}\]
    Then there is unique action of $G$ on $X$ given by the universal property of the colimit.

    As a concrete example, this is how one can obtain the action of $\GL_\infty$ on $\bbA^\infty$.
\end{example}

\begin{lemma}
\label{lem:ind_orbits}
Let $G$ be an ind-group acting on an ind-variety $X$ as in \Cref{ex:ind-action}. Let $x \in X$. Let $i \in I$ and $x_i \in X_i$ such that $\varphi_i(x_i) = x$. For any $j > i$, we set $x_j := \varphi_{ij}(x_i)$. We denote the orbit of $x$ in $X$ by $\sO_x$ and the orbit of $x_j$ in $X_j$ by $\sO_{x_j}$. We assume that $\varphi_j^{-1}(\sO_x) = \sO_{x_j}$. The following hold.
\begin{enumerate}
    \item The $\sO_{x_j}$ form a filtered system. Moreover, $\sO_x = \colim_{j \in I, j > i} \sO_{x_j}$.
    \item $\overline{\sO_x} = \colim_{j \in I, j > i} \overline{\sO_{x_j}}$.
\end{enumerate}
\end{lemma}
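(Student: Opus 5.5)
Both parts reduce to the colimit topology (\Cref{lem:colimit_topology}) together with the hypothesis $\varphi_j^{-1}(\sO_x) = \sO_{x_j}$. The indexing set is $J = \{j \in I \mid j > i\}$, which is final in $I$ since $I$ is filtered; by \Cref{rmk:ind_countable} we may even assume it is linearly ordered. Colimits over $I$ and over $J$ therefore agree, and we may work with $J$ throughout.

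For (1), we first check that the orbits form a filtered system. For $i < j < j'$ we have $\varphi_{jj'}(x_j) = x_{j'}$ because $\varphi_{jj'}\varphi_{ij} = \varphi_{ij'}$. Compatibility of the actions (the commuting square in \Cref{ex:ind-action}) gives $\varphi_{jj'}(g.x_j) = \varphi_{jj'}(g).x_{j'}$ for $g \in G_j$. Hence $\varphi_{jj'}(\sO_{x_j}) \subset \sO_{x_{j'}}$, and the restricted maps are still closed immersions onto their images (as locally closed subvarieties, with $\varphi_{jj'}^{-1}(\sO_{x_{j'}}) = \sO_{x_j}$ by applying the hypothesis to both $j$ and $j'$ together with $\varphi_j = \varphi_{j'}\varphi_{jj'}$).

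To identify the colimit we use \Cref{rmk:ind_pts}. Every point $y \in \sO_x$ is of the form $g.x$ with $g \in G$. The element $g$ comes from some $G_j$, and we may enlarge $j$ so that $j > i$. Then $y = \varphi_j(g.x_j)$ lies in the image of $\sO_{x_j}$. Conversely, $\varphi_j(\sO_{x_j}) \subset \sO_x$ by equivariance of $\varphi_j$. So $\sO_x = \bigcup_j \varphi_j(\sO_{x_j})$, and the equality $\varphi_j^{-1}(\sO_x) = \sO_{x_j}$ shows that $\sO_x$ carries the colimit structure. That is, $\sO_x = \colim_{j} \sO_{x_j}$ as sub-ind-varieties, viewed inside the functor $X$.

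For (2), set $C := \colim_j \overline{\sO_{x_j}} = \bigcup_j \varphi_j(\overline{\sO_{x_j}})$. The first step is to check that this is a filtered system, i.e.\ that $\varphi_{jj'}(\overline{\sO_{x_j}}) \subset \overline{\sO_{x_{j'}}}$. This holds because $\varphi_{jj'}$ is continuous and maps $\sO_{x_j}$ into $\sO_{x_{j'}}$.

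Next we show $C \subset \overline{\sO_x}$. Let $U$ be any closed set containing $\sO_x$. Then $\varphi_j^{-1}(U)$ is closed and contains $\sO_{x_j}$, so it contains $\overline{\sO_{x_j}}$.

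For the reverse inclusion $\overline{\sO_x} \subset C$, it suffices to show that $C$ is closed, since $C$ clearly contains $\sO_x$. By \Cref{lem:colimit_topology}, this means showing that $\varphi_k^{-1}(C)$ is closed in $X_k$ for every $k$. This is the main obstacle. The plan is to show
\begin{equation*}
\varphi_k^{-1}(C) = \overline{\sO_{x_k}} \quad \text{for } k > i.
\end{equation*}
The inclusion $\supset$ is clear. For $\subset$, take $z \in X_k$ with $\varphi_k(z) \in \varphi_{j}(\overline{\sO_{x_{j}}})$ for some $j$, which we may take to satisfy $j \geq k$. Since $\varphi_j$ is injective, this gives $\varphi_{kj}(z) \in \overline{\sO_{x_j}}$. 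We then need
\begin{equation*}
\varphi_{kj}^{-1}\bigl(\overline{\sO_{x_j}}\bigr) = \overline{\sO_{x_k}}.
\end{equation*}
The hypothesis yields $\varphi_{kj}^{-1}(\sO_{x_j}) = \sO_{x_k}$, but closures need not commute with preimages in general. So the key point is to deduce this equality from the hypothesis, using that the images $\varphi_{kj}(X_k)$ are closed.

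I would try to get it from the orbit structure. Orbit closures are unions of orbits, and $\varphi_{kj}^{-1}(\overline{\sO_{x_j}})$ is a closed $G_k$-stable subset containing $\sO_{x_k}$. To conclude, I would compare it with $\overline{\sO_{x_k}}$ using that $\varphi_{kj}(X_k)$ is closed, and that $\overline{\sO_{x_j}} \cap \varphi_{kj}(X_k)$ is the closure of $\sO_{x_j} \cap \varphi_{kj}(X_k) = \varphi_{kj}(\sO_{x_k})$. That last equality of closures is where further argument may be needed: it holds if $\sO_{x_j} \cap \varphi_{kj}(X_k)$ is dense in $\overline{\sO_{x_j}} \cap \varphi_{kj}(X_k)$. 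If this cannot be established in general, I would instead take the closure in the colimit topology directly as the definition of the right-hand side, and show that the filtered union of closures is closed by checking each $\varphi_k^{-1}$ against the whole system.
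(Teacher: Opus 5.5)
Your proof of (1), and of the inclusion $C\subset\overline{\sO_x}$ in (2), is the same as the paper's.

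\textbf{The gap} is the reverse inclusion. It needs $C=\bigcup_j\varphi_j(\overline{\sO_{x_j}})$ to be closed, i.e.\ that $\varphi_k^{-1}(C)=\bigcup_{j\ge k}\varphi_{kj}^{-1}(\overline{\sO_{x_j}})$ is closed in $X_k$ for every $k$. You leave this open.

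The route you sketch cannot be carried out from the orbit hypothesis alone, because the identity $\varphi_{kj}^{-1}(\overline{\sO_{x_j}})=\overline{\sO_{x_k}}$ (equivalently, your density claim) can fail. Let $\mathbb{G}_m$ act on $\bbA^2$ by $\lambda\cdot(a,b)=(\lambda a,\lambda^2(a+b)-\lambda a)$. This is linear with weights $1,2$ in the coordinates $a$ and $a+b$. The orbit $\{(\lambda,\lambda^2-\lambda)\}$ of $(1,0)$ meets the line $\{b=0\}$ only in $(1,0)$, but its closure also contains $(0,0)$. Now take:
\begin{itemize}
\item $X_0=X_1=\bbA^1$, embedded as $\{b=0\}$, with trivial groups;
\item $X_j=\bbA^2$ and $G_j=\mathbb{G}_m$ for $j\ge 2$;
\item $i=0$ and $x_0=1$.
\end{itemize}
Then $\varphi_j^{-1}(\sO_x)=\sO_{x_j}$ holds for all $j$, yet $\varphi_1^{-1}(C)=\{0,1\}\neq\{1\}=\overline{\sO_{x_1}}$. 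Here (2) still holds, but only because this particular union happens to be closed. Your fallback of redefining the right-hand side as a closure changes the statement rather than proving it.

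\textbf{The paper does not supply this step either.} It asserts in one line that $\colim_j\overline{\sO_{x_j}}$ is closed ``by \Cref{lem:colimit_topology}''. That lemma only reduces closedness to closedness of the preimages $\varphi_k^{-1}(C)$, which is exactly the point you isolated, so an extra input is needed.

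\textbf{How to close it in the case that matters.} Where the lemma is applied (\Cref{prop:properties_ind_var}), the input is available, because $\varphi_{kj}^{-1}(\overline{\sO_{x_j}})=\overline{\sO_{x_k}}$ amounts to cancelling a contractible summand. Suppose $M\oplus Q\leqdeg N\oplus Q$ with $Q$ contractible.
\begin{enumerate}
\item \Cref{thm:deg_and_ses_in_chnproj} gives an admissible sequence $0\to Z\to Z\oplus M\oplus Q\to N\oplus Q\to 0$.
\item Composing with the projection onto $Q$ gives a deflation onto $Q$. Since $\Hom(P_{iq},X)\cong\Hom_\Lambda(P_i,X^q)$, contractible complexes are projective for the degreewise split exact structure, so this deflation splits.
\item Its kernel $K$ sits in an admissible sequence $0\to Z\to K\to N\to 0$, and $K\oplus Q\cong Z\oplus M\oplus Q$. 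Krull--Schmidt then gives $K\cong Z\oplus M$.
\item \Cref{lem:analogue_Riedtmann} now yields $M\leqdeg N$.
\end{enumerate}
Hence $\varphi_k^{-1}(C)=\overline{\sO_{x_k}}$ is closed for $k\ge i$. For other $k$, pass to a common upper bound $l\ge k,i$ and use $\varphi_k^{-1}(C)=\varphi_{kl}^{-1}(\varphi_l^{-1}(C))$. So $C$ is closed.

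\textbf{In the abstract form of the lemma,} neither your argument nor the paper's shows that the orbit hypothesis suffices. It is safest to add this closure identity as a hypothesis. Alternatively, assume only that the chain $\varphi_{kj}^{-1}(\overline{\sO_{x_j}})$ stabilises; this is automatic when $X_k$ has finitely many $G_k$-orbits.
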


\begin{proof}
We prove (1) first. By our assumptions, for any $\ell > j > i$, we have an inclusion $\varphi_{j\ell} (\sO_{x_j}) \subset \sO_{x_\ell}$. So these orbits form a filtered system. Let $y \in \sO_x$. So there exists a $g \in G$ such that $g.x = y$ in $X$. Then there exist $j > i$, $y_j \in X_j$ and $g_j \in G$ such that we have $\varphi_j(y_j) = y$, $\varphi_j(g_j) = g$ and $g_j.x_j = y_j$ in $X_j$. In particular, we have that $y \in \colim_{j \in I, j > i} \sO_{x_j}$ . The inverse inclusion is clear.

Now we prove (2). Clearly, $\overline{\sO_x}$ is closed in $X$. By \Cref{lem:colimit_topology}, so is $\colim_{j \in I, j > i} \overline{\sO_{x_j}}$. Moreover,we have an inclusion $\sO_x = \colim_{j \in I, j > i} \sO_{x_j} \subset \colim_{j \in I, j > i} \overline{\sO_{x_j}}$, so we obtain the inclusion of its closures $\overline{\sO_x} \subset \colim_{j \in I, j > i} \overline{\sO_{x_j}}$. On the other hand, we know that there is an inclusion $\sO_{x_j} = \varphi_j^{-1}(\sO_x) \subset \varphi_j^{-1}(\overline{\sO_x})$. So we again get the closed version, $\overline{\sO_{x_j}} \subset \varphi_j^{-1}(\overline{\sO_x})$. Applying $\varphi_j$ gives us the inclusion $\varphi_j(\overline{\sO_{x_j}}) \subset \overline{\sO_x}$. As a consequence, we have that $\colim_{j \in I, j > i} \overline{\sO_{x_j}} \subset \overline{\sO_x}$.
\end{proof}

\subsubsection{Application to the homotopy category}

Note that in $\bhomproj$, $X \cong Y$ if and only if there exist chain complexes $C, X_0, Y_0 \in \bchnproj$ with $X_0$ and $Y_0$ null-homotopic such that there are isomorphisms in $\bchnproj$ of the form $X \cong C \oplus X_0$ and $Y \cong C \oplus Y_0$.. In particular, $X \oplus Y_0 \cong Y \oplus X_0$ in $\bchnproj$. This leads us to defining ind-varieties associated to the homotopy category as colimits of chain complex varieties. 

Let us assume that $\Lambda$ has $p$ pairwise-different indecomposable projectives $P_1, \ldots, P_p$ up to isomorphism. A null-homotopic chain complex in $\bhomproj$ is isomorphic to a finite direct sum of complexes of the form 
\begin{equation*}
    \cdots \rightarrow 0 \rightarrow \underset{q}{P_i} \xrightarrow{\id} \underset{q+1}{{P_i}} \rightarrow 0 \rightarrow \cdots
\end{equation*}
where the $P_i$ are in cohomological degrees $q$ and $q+1$. We denote this complex by $P_{iq}$.

We let $v = (v^q)_{q \in \bbZ} \in (\bbN^p)^\bbZ$ be a finitely supported point, i.e. $v^q = 0$ for all but finitely many $q$. Here, $v^q = (v^q_i) \in \bbN^p$. To $v$, we associate $P(v) = (\bigoplus_{1 \leq i \leq p} P_i^{v^q_i})_{q \in \bbZ}$. Note that any set of underlying projectives of a chain complex can be uniquely obtained this way. We define the $g$-vector of $v$ as $g(v) = g(P(v)) = [P(v)] = \sum_{q \in \bbZ} (-1)^q v^q$, the element associtated to $P(v)$ in the Grothendieck group of $\bhomproj$. We define an ind-variety for every $g$-vector.

Let $e_{iq} \in (\bbN^p)^\bbZ$ be the element associated to $P_{iq}$. We define the morphisms $\varphi_{v,iq}$ and $\psi_{v,iq}$ as follows
\begin{equation*}
    \begin{aligned}
        \varphi_{v,iq} \colon \sC_{P(v)} &\hookrightarrow \sC_{P(v + e_{iq})},\\ X &\mapsto X \oplus P_{iq} 
    \end{aligned}
    \qquad \qquad
    \begin{aligned}
        \psi_{v,iq} \colon \sG_{P(v)} &\hookrightarrow \sG_{P(v + e_{iq})}.\\ f &\mapsto f \oplus \id_{P_{iq}}
    \end{aligned}
\end{equation*}
For $v,w \in (\bbN^p)^\bbZ$, we say $v \leq w$ if $w$ can be obtained from $v$ by adding a finite number of vectors of type $e_{iq}$.

\begin{definition}
    Let $g \in \bbN^p$. The ind-variety for $g$ is defined as 
    \begin{equation*}
        \bvarPhom = \colim_{\substack{v \in (\bbN^p)^\bbZ\\g(v) = g}} C_{P(v)}
    \end{equation*}
    where the colimit structure is given by the $\varphi_{v,iq}$ for all $v \in (\bbN^p)^\bbZ$ such that $g(v) = g$, $q \in \bbZ$ and $1 \leq i \leq p$. We denote the morphisms into the colimit as $\varphi_v \colon C_{P(v)} \rightarrow \bvarPhom$.
    
    We define the ind-group as
    \begin{equation*}
        \bGhom = \colim_{\substack{v \in (\bbN^p)^\bbZ\\g(v) = g}} \sG_{P(v)}.
    \end{equation*}
    where the colimit structure is given by the $\psi_{v,iq}$. We denote the morphisms into the colimit as $\psi_v \colon \sG_{P(v)} \rightarrow \bGhom$.
    
    For an interval $[a,b]$ we define the ind-variety for $g$ restricted to $[a,b]$ as 
    \begin{equation*}
        \ivarPhom{a}{b} = \colim_{\substack{v \in (\bbN^p)^{[a,b]}\\g(v) = g}} C_{P(v)}
    \end{equation*}
    where $(\bbN^p)^{[a,b]}\subset (\bbN^p)^\bbZ$ is the subset of $v$ supported on the interval $[a,b]$. We also define the ind-group
    \begin{equation*}
        \iGhom{a}{b} = \colim_{\substack{v \in (\bbN^p)^{[a,b]}\\g(v) = g}} \sG_{P(v)}.
    \end{equation*}
    We use the same notation as before for morphisms into these colimits.
\end{definition}

\begin{remark}
These are all filtered colimits, so they are well-defined ind-varieties and ind-groups. In the following, for statements holding both for the general case or the restriction to an interval, we will use the notation $\varPhom$, resp. $\Ghom$.
\end{remark}

\begin{remark}
    By construction, the following hold, cf. \Cref{rmk:ind_pts} and \Cref{ex:ind-action}:
    \begin{enumerate}
        \item For a point $M \in \varPhom$, there exists a $v \in (\bbN^p)^\ast$ and a complex $M_v \in \sC_{P(v)}$ such that $\varphi_v(M_v) = M$. For any $w \geq v$, we define $M_w = \varphi_{vw}(M_v)$.
        \item For a point $g \in \varPhom$, there exists a $v \in (\bbN^p)^\ast$ and a morphism $g_v \in \sG_{P(v)}$ such that $\psi(g_v) = g$. For any $w \geq v$, we define $g_w = \psi_{vw}(g_v)$.
        \item The ind-group $\Ghom$ acts on $\varPhom$.
    \end{enumerate}
\end{remark}

In the following, we list some important properties of $\varPhom$ and $\Ghom$.

\begin{proposition}
\label{prop:properties_ind_var}
    Let $g \in \bbN^p$. The following hold.
    \begin{enumerate}
        \item \label{item:homotopy} For a point $M \in \varPhom$, $\sO_M$ is its homotopy class where $M$ is seen as a complex.
        \item \label{item:prereq} For a point $M \in \varPhom$ associated to some $M_v \in C_{P(v)}$, for any $w \geq v$, $\varphi_w^{-1}(\sO_M) = \sO_{M_w}$.
        \item \label{item:lemma} For a point $M \in \varPhom$ associated to some $M_v \in C_{P(v)}$, $\sO_M = \colim_{w \geq v} \sO_{M_w}$ and $\overline{\sO_M} = \colim_{w \geq v} \overline{\sO_{M_w}}$.
        \item \label{item:connected} The ind-variety $\varPhom$ is connected as a topological space.
    \end{enumerate}
\end{proposition}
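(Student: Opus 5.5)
We prove the four items in order, since each feeds into the next.

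For (\ref{item:homotopy}), take $N \in \varPhom$. By construction both $M$ and $N$ are represented at a common index: choose $v, v'$ with $\varphi_v(M_v)=M$ and $\varphi_{v'}(N_{v'})=N$. Since $g(v)=g(v')$ and the indexing system is filtered, there is a $w \geq v, v'$, and then $M_w, N_w \in \sC_{P(w)}$.

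\emph{Orbit contained in homotopy class.} If $N = h.M$ for some $h \in \Ghom$, then $h$ is represented by some $h_u \in \sG_{P(u)}$ with $u \geq w$. Hence $N_u \cong M_u = M_v \oplus (\text{contractible})$ in $\bchnproj$, so $M \cong N$ in $\bhomproj$.

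\emph{Homotopy class contained in orbit.} Suppose $M \cong N$ in $\bhomproj$. By the remark preceding the definition, $M_w \oplus Y_0 \cong N_w \oplus X_0$ in $\bchnproj$ with $X_0, Y_0$ null-homotopic, hence sums of the $P_{iq}$. The underlying modules of both sides coincide, since they have underlying modules $P(w)+$those of $Y_0$, respectively $P(w)+$those of $X_0$. By Krull--Schmidt for projectives, $X_0$ and $Y_0$ have the same underlying modules, and a sum of $P_{iq}$'s is determined up to isomorphism by its underlying modules. Indeed, the multiplicities of $e_{iq}$ can be solved recursively from the lowest degree upward, because $P_{iq}$ occupies degrees $q$ and $q+1$. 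Therefore $X_0 \cong Y_0$, so with $u = w + (\text{vector of } X_0)$ we get $M_u \cong N_u$ in $\sC_{P(u)}$, and they lie in one $\sG_{P(u)}$-orbit.

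For (\ref{item:prereq}), the inclusion $\sO_{M_w} \subset \varphi_w^{-1}(\sO_M)$ is clear. Conversely, if $\varphi_w(X) \in \sO_M$ with $X \in \sC_{P(w)}$, then by (\ref{item:homotopy}) $X \cong M_w$ in $\bhomproj$. Both $X$ and $M_w$ have the same underlying modules $P(w)$. The argument above gives $X \oplus C \cong M_w \oplus C$ in $\bchnproj$ for a contractible $C$, and Krull--Schmidt in the Krull--Schmidt category $\bchnproj$ (Hom-finite, idempotents split) lets us cancel $C$, so $X \cong M_w$, i.e.\ $X \in \sO_{M_w}$. The cancellation step is the main point to check carefully.

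Item (\ref{item:lemma}) is then exactly \Cref{lem:ind_orbits}, whose hypothesis is (\ref{item:prereq}) and whose setup is \Cref{ex:ind-action} applied to the compatible actions of $\sG_{P(v)}$ on $\sC_{P(v)}$. Compatibility holds because $(f\oplus\id).(X\oplus P_{iq}) = f.X \oplus P_{iq}$.

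For (\ref{item:connected}), each $\sC_{P(v)}$ is connected: it is a cone, since scaling all differentials by $\lambda \in k$ preserves $d^{i+1}d^i=0$, so every point is joined to $0$ by a line as in \Cref{ex:trivial_deg}. Any two points $M, N$ live in a common $\sC_{P(w)}$ by filteredness, as in the first step, and its image under the continuous map $\varphi_w$ is connected. Hence $\varPhom$ is a union of connected subsets, any two of which lie in a common one, so it is connected. By \Cref{lem:colimit_topology}, a clopen set containing $M$ pulls back to a clopen set in each $\sC_{P(w)}$ and hence contains every point.

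Filteredness is the main obstacle throughout. It requires that any $v, v'$ with $g(v)=g(v')$ admit a common upper bound, which I would prove by adding suitable $e_{iq}$ degreewise, again using the recursive solving from the lowest degree. For the interval version, one has to verify that this recursion stays inside $[a,b]$. This is plausible because $v-v'$ has alternating sum zero, so it is a $\bbZ$-combination of the $e_{iq}$ with $q, q+1 \in [a,b]$.
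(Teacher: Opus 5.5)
Your proposal is correct and follows the paper's proof item by item. Item (1) passes to a common index and splits off contractible summands. Item (2) uses ``same underlying modules and homotopic implies isomorphic''; you justify this by Krull--Schmidt cancellation in $\bchnproj$, where the paper simply asserts it. Item (3) is \Cref{lem:ind_orbits}, and item (4) degenerates everything to the zero-differential complex, which you phrase as each $\sC_{P(w)}$ being a cone. Along the way you also supply details the paper leaves implicit: the converse inclusion in (1), directedness of the index set (including the interval case), and compatibility of the group actions.
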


\begin{proof}
\begin{enumerate}
    \item 
    Let $N \in \varPhom$ such that $N \in \sO_M$, i.e. there exists a $g \in \Ghom$ such that $g.M = N$. Then there exist $v$,$w$,$x$ and $M_v \in C_{P(v)}$, $N_w \in C_{P(w)}$ and $g_x \in \sG_{P(x)}$ such that $M = \varphi_v(M_v)$, $N = \varphi_w(N_w)$ and $g = \psi_x(g_x)$. Since the colimit is filtered, there exists a $y$ greater than $v$, $w$ and $x$ such that $g_y.M_y = N_y$. So $M_y$ and $N_y$ are isomorphic complexes. Note that $M_y = M_v \oplus Q_M$ and $N_y = N_w \oplus Q_N$ where $Q_M$ and $Q_N$ are acyclic. So $M_v$ and $N_w$ are homotopic. The inverse is clear.
    \item 
    Let $N \in \varPhom$ such that $N \in \sO_M$. Assume that for some $w \geq v$, there exists $N_w \in C_{P(w)}$ such that $N = \varphi_w(N_w)$. We need to show that $N_w$ is in the orbit of $M_w$, i.e. that $N_w$ is isomorphic to $M_w$ as a chain complex. Since $N \in \sO_M$, by (\ref{item:homotopy}) $N_w$ is homotopic to $M_w$. Since they lie in the same $C_{P(w)}$, they have the same underlying modules. Now we apply that chain complexes with the same underlying modules are homotopic if and only if they are isomorphic.
    \item  
    Due to construction and point (\ref{item:prereq}), \Cref{lem:ind_orbits} holds.
    \item  
    Let $M, N \in \varPhom$. By the same argument as in the beginning of (\ref{item:homotopy}), there is a $v$ such that $M$ and $N$ correspond to $M_v, N_v \in \sC_{P(v)}$. Let $C \in \sC_{P(v)}$ be the complex with zero morphisms in $\sC_{P(v)}$. Then $M_v \leqdeg C$ and $N_v \leqdeg C$. So $\overline{\sO_M}$ intersects $\overline{\sO_N}$. In particular, $M$ and $N$ are connected.
\end{enumerate}
\end{proof}

\begin{definition}
\label{def:deg_homotopy}
Let $M,N \in \varPhom$. We say that $M$ degenerates to $N$ or that $N$ is a degeneration of $M$, written $M \leqdeghom N$, if there is an inclusion $\sO_N \subset \overline{\sO_M}$ in $\varPhom$.
\end{definition}

\begin{remark}
\label{rmk:deg_homotopy}
    By \Cref{prop:properties_ind_var} (\ref{item:lemma}), $M \leqdeghom N$ if and only if there is some $v$ such that $M_v \leqdeg N_v$. Moreover if we have already chosen representatives $M_v$ and $N_w$, $M \leqdeghom N$ if and only if we can add acyclic complexes $Q_M$ and $Q_N$ (if applicable, supported on $[a,b]$) such that $M_v \oplus Q_M \leqdeg N_w \oplus Q_N$.
\end{remark}

\begin{example}
\label{ex:A_1_homotopy}
    We return to $\Lambda = kA_1$ and \Cref{ex:A_1_chain_cx_deg}. We describe $\sK^{[-n,0]}_0$. Note that the $\PP$ with $d(\PP) = (m,2m, \ldots, 2m, m)$ we considered before form a final set in our colimit. So it is enough to understand them to calculate the colimit. We consider the same vectors $a$ as before. In the colimit, condition (1) becomes obsolete as there is always a $m >> 0$ such that $a \leq d(\PP)$. The same is true for condition (2)(b). So an element in the colimit has the form $a \in \bbN^{n+1}$ with $\sum_{-n \leq i \leq 0} (-1)^i a_i = 0$. The degeneration order is still described by the same cover relations as before.
    
    Let $n = 1$. Then the poset of degenerations is isomorphic to $(\bbN, \leq)$ and has the form
    \[\begin{tikzcd}
    	\vdots && \vdots \\
    	{(3,3)} && {k^3 \xrightarrow{0} k^3} \\
    	{(2,2)} & \longleftrightarrow & {k^2 \xrightarrow{0} k^2} \\
    	{(1,1)} && {k \xrightarrow{0} k} \\
    	{(0,0)} && {0 \xrightarrow{} 0}
    	\arrow[no head, from=1-1, to=2-1]
    	\arrow[no head, from=1-3, to=2-3]
    	\arrow[no head, from=2-1, to=3-1]
    	\arrow[no head, from=2-3, to=3-3]
    	\arrow[no head, from=3-1, to=4-1]
    	\arrow[no head, from=3-3, to=4-3]
    	\arrow[no head, from=4-1, to=5-1]
    	\arrow[no head, from=4-3, to=5-3]
    \end{tikzcd}\]
    
    For $n = 2$, the poset of degenerations is isomorphic to $(\bbN^2,\leq)$:
    \[\begin{tikzcd}[sep=tiny]
    	\vdots && \vdots && \vdots && \vdots && \vdots \\
    	& {(3,3,0)} && {(2,3,1)} && {(1,3,2)} && {(0,3,3)} \\
    	&& {(2,2,0)} && {(1,2,1)} && {(0,2,2)} \\
    	&&& {(1,1,0)} && {(0,1,1)} \\
    	&&&& {(0,0,0)}
    	\arrow[no head, from=1-1, to=2-2]
    	\arrow[no head, from=1-3, to=2-2]
    	\arrow[no head, from=1-3, to=2-4]
    	\arrow[no head, from=1-5, to=2-4]
    	\arrow[no head, from=1-5, to=2-6]
    	\arrow[no head, from=1-7, to=2-6]
    	\arrow[no head, from=1-7, to=2-8]
    	\arrow[no head, from=1-9, to=2-8]
    	\arrow[no head, from=2-2, to=3-3]
    	\arrow[no head, from=2-4, to=3-3]
    	\arrow[no head, from=2-4, to=3-5]
    	\arrow[no head, from=2-6, to=3-5]
    	\arrow[no head, from=2-6, to=3-7]
    	\arrow[no head, from=2-8, to=3-7]
    	\arrow[no head, from=3-3, to=4-4]
    	\arrow[no head, from=3-5, to=4-4]
    	\arrow[no head, from=3-5, to=4-6]
    	\arrow[no head, from=3-7, to=4-6]
    	\arrow[no head, from=4-4, to=5-5]
    	\arrow[no head, from=4-6, to=5-5]
    \end{tikzcd}\]
    
    For $n\geq 3$, the poset of degenerations is no longer isomorphic to $(\bbN^{n-1}, \leq)$. Instead, for $n = 3$, we have a poset of the form:\\
    \includegraphics[width = \textwidth]{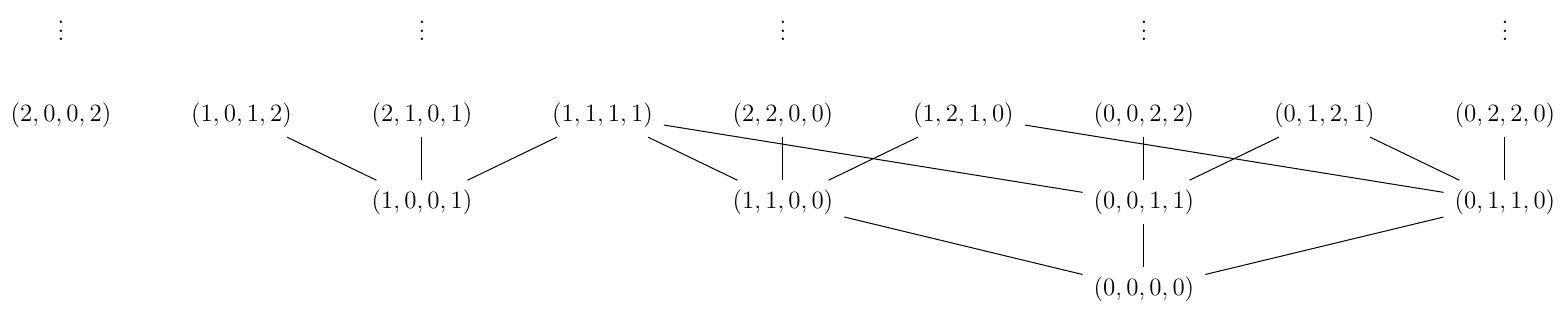}
    Minimal elements have the form $(n,0,0,n)$ for any $n \in \bbN$.
    
    We can also describe the degeneration order on $\sK^b_0$. The elements are given as vectors in $\bbN^\bbZ$ such that $\sum_{i \in \bbZ} (-1)^i a_{i} = 0$. The degenerations are still obtained from the same cover relations. The minimal elements are the $a$ such that $a_i \neq 0 \Rightarrow a_{i+1} = 0$. Note that there is no maximal element.
\end{example}

\subsection{Degenerations and triangles}
\label{subsect:deg_in_Kb}

In the following, we conflate points $M \in \varPhom$ with associated complexes up to homotopy and treat them as such.

\begin{theorem}
\label{thm:deg_traingles_homotopy}
    Let $M,N \in \varPhom$. The following are equivalent:
    \begin{enumerate}
        \item $M \leqdeghom N$,
        \item there exists a complex $Z \in \homprojstar$ and a triangle $ N \rightarrow M \oplus Z \rightarrow Z \rightarrow N[1]$,
        \item there exists a complex $Z \in \homprojstar$ and a triangle $ Z \rightarrow M \oplus Z \rightarrow N \rightarrow Z[1]$.
    \end{enumerate}
\end{theorem}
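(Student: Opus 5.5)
The plan is to reduce everything to \Cref{thm:deg_and_ses_in_chnproj} via \Cref{rmk:deg_homotopy} and \Cref{lem:triangles_and_ses}. I will prove (1) $\Rightarrow$ (2), (2) $\Rightarrow$ (1), and obtain the equivalence with (3) by the same argument using the third statement of \Cref{thm:deg_and_ses_in_chnproj}.

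For (1) $\Rightarrow$ (2), \Cref{rmk:deg_homotopy} gives some $v$ with representatives $M_v, N_v \in \sC_{P(v)}$ and $M_v \leqdeg N_v$ in $\sC_{P(v)}$. In the interval case, $v$ can be chosen supported on $[a,b]$. Applying \Cref{thm:deg_and_ses_in_chnproj} to $\sC_{P(v)}$ yields $Z$ in $\bchnproj$, supported on the same interval, and a short exact sequence $0 \to N_v \to M_v \oplus Z \to Z \to 0$. This sequence is degree-wise split, so by \Cref{lem:triangles_and_ses} it induces a distinguished triangle $N_v \to M_v \oplus Z \to Z \to N_v[1]$ in $\bhomproj$. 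Since $M_v \cong M$ and $N_v \cong N$ in the homotopy category, this is the triangle required in (2), with $Z \in \homprojstar$.

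For (2) $\Rightarrow$ (1), start from a triangle $N \xrightarrow{u} M \oplus Z \to Z \to N[1]$. Fix complexes in $\bchnproj$ representing $N$, $M$ and $Z$. By \Cref{lem:triangles_and_ses}, the triangle is isomorphic to one coming from a short exact sequence $0 \to N \to Y' \to Z \to 0$ in $\bchnproj$, where $Y'$ is the mapping cylinder, so $Y' \cong M \oplus Z$ in $\bhomproj$. This is the main obstacle: \Cref{thm:deg_and_ses_in_chnproj} needs the middle term to be $M \oplus Z$ up to isomorphism of complexes, but it is only so up to homotopy. By the remark preceding the definition of $\bvarPhom$, there are null-homotopic complexes $Q, Q'$ with $Y' \oplus Q \cong M \oplus Z \oplus Q'$ in $\bchnproj$. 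I will add the split sequence $0 \to 0 \to Q \to Q \to 0$ and absorb $Q$ into $Z$. Concretely, replace $Z$ by $\tilde Z = Z \oplus Q$, which is homotopic to $Z$, and replace $M$ by $\tilde M = M \oplus Q'$, which represents the same point of $\varPhom$. This gives a degree-wise split short exact sequence $0 \to N \to \tilde M \oplus \tilde Z \to \tilde Z \to 0$ in $\bchnproj$.

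Next, the middle and right terms of this sequence have underlying modules $\tilde M^i \oplus \tilde Z^i$ and $\tilde Z^i$, while the sequence is degree-wise split. By Krull--Schmidt for projectives, $N$ and $\tilde M$ therefore have the same underlying modules. Hence they lie in a common $\sC_{P(w)}$, and \Cref{thm:deg_and_ses_in_chnproj} gives $\tilde M \leqdeg N$ there. By \Cref{rmk:deg_homotopy}, this means $M \leqdeghom N$.

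In the interval-restricted case, one must also ensure that the cylinder and the acyclic complexes $Q, Q'$ can be chosen supported on $[a,b]$. The cylinder has the same support as $N$ and $M \oplus Z$. The acyclic summands $Q, Q'$ can be taken as the contractible parts of these complexes, which lie in the same interval. The possibly wider support of $Z$ in $\homprojstar$ is harmless once one notes that $Z$ may be replaced by its minimal (homotopically reduced) representative. If support is still an issue, I would fall back on proving the general $\bvarPhom$ version and then invoking the paper's later claim that degenerations in $\ivarPhom{a}{b}$ come from $\bvarPhom$.
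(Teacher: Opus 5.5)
Your (1) $\Rightarrow$ (2) is exactly the paper's argument. In (2) $\Rightarrow$ (1) you correctly identify the difficulty: the middle term $Y'$ equals $M\oplus Z$ only up to contractible summands. However, the way you absorb the contractible complexes does not work.

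Adding $0\to 0\to Q\to Q\to 0$ to $0\to N\to Y'\to Z\to 0$ gives $0\to N\to Y'\oplus Q\to Z\oplus Q\to 0$. Its middle term is $Y'\oplus Q\cong (M\oplus Q')\oplus Z$, not $(M\oplus Q')\oplus (Z\oplus Q)=\tilde M\oplus\tilde Z$, so you are one copy of $Q$ short. Correspondingly, degree-wise splitting only gives $N^i\oplus Q^i\cong M^i\oplus Q'^i$, so $N$ and $\tilde M$ need not have the same underlying modules.

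This really occurs. Take $N=0$, $M$ a nonzero contractible complex, and the triangle $0\to M\oplus Z\xrightarrow{p} Z\to 0$ with $p$ the projection, which is an isomorphism in $\bhomproj$. Then \Cref{lem:triangles_and_ses} forces $Y'\cong Z$. Cancellation in the Krull--Schmidt category $\bchnproj$ then forces $Q\cong M\oplus Q'\neq 0$. So $N=0$ and $\tilde M=M\oplus Q'\neq 0$ do not lie in a common $\sC_{P(w)}$.

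The repair is to attach the contractible summand to $N$ instead of to $Z$. Adding $0\to Q\to Q\to 0\to 0$ yields $0\to N\oplus Q\to (M\oplus Q')\oplus Z\to Z\to 0$. Then \Cref{thm:deg_and_ses_in_chnproj} gives $M\oplus Q'\leqdeg N\oplus Q$, and your cancellation argument now correctly shows the two ends share underlying modules. Since $M\oplus Q'$ and $N\oplus Q$ represent $M$ and $N$ in $\varPhom$, this finishes the proof. It is precisely the paper's proof, with $Q_X, Q_M$ in place of your $Q, Q'$. The same care is needed for (3): there the contractible summand must again go onto $N$, which is now the right-hand term, not onto $Z$.

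Your treatment of the interval case also needs correcting, though the conclusion survives. The complex supplied by \Cref{lem:triangles_and_ses} with end terms exactly $N$ and $Z$ is not the mapping cylinder of $N\to M\oplus Z$. The cylinder's short exact sequence ends in the cone of that map rather than in $Z$, and its degree-$n$ term contains $N^{n+1}$, so it leaves $[a,b]$. The correct complex is the cone of the rotated map $Z[-1]\to N$, whose degree-$n$ term is $N^n\oplus Z^n$. It is therefore supported on $[a,b]$ when $M$, $N$, $Z$ are. Moreover, $Q$ and $Q'$ can be taken to be contractible summands of $M\oplus Z$ and $Y'$ respectively, hence also supported on $[a,b]$, so no further argument is needed. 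Your proposed fallback should be dropped in any case: the paper proves the comparison between $\ivarPhom{a}{b}$ and the larger ind-varieties by applying (2) $\Rightarrow$ (1) of this very theorem in $\ivarPhom{a}{b}$, so invoking it here would be circular.
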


\begin{proof}
We will show that (1) is equivalent to (2). The connection to the triangles in (3) is obtained dually.

We first show (1) $\Rightarrow$ (2).
Let $M \leqdeghom N$. By \Cref{rmk:deg_homotopy}, there are acyclic complexes $Q_M$, $Q_N$ such that $M \oplus Q_M \leqdeg N \oplus Q_N$ in $\bchnproj$. So by \Cref{thm:deg_and_ses_in_chnproj} there exists a short exact sequence in $\bchnproj$
\begin{equation*}
    0 \rightarrow N \oplus Q_N \rightarrow M \oplus Q_M \oplus Z \rightarrow Z \rightarrow 0.
\end{equation*}
By \Cref{lem:triangles_and_ses}, this gives rise to a triangle in $\sK^{[-n,0]}(\proj \Lambda)$
\begin{equation*}
    N \oplus Q_N \rightarrow M \oplus Q_M \oplus Z \rightarrow Z \rightarrow (N \oplus Q_N)[1].
\end{equation*}
Since null-homotopic complexes are considered to be zero, this triangle is isomorphic to $N  \rightarrow M \oplus Z \rightarrow Z \rightarrow N[1]$.

Now we show (2) $\Rightarrow$ (1). Let us assume we have a triangle $N  \rightarrow M \oplus Z \rightarrow Z \rightarrow N[1]$. By \Cref{lem:triangles_and_ses}, there is an isomorphic triangle $N \rightarrow X \oplus Z \rightarrow Z \rightarrow N[1]$ such that $0 \rightarrow N \rightarrow X \rightarrow Z \rightarrow 0$ is a short exact sequence in $\bchnproj$. Since $M \oplus Z \cong X$ in $\sK^b(\proj\Lambda)$, there exist null-homotopic complexes $Q_M$ and $Q_X$ such that $M \oplus Z \oplus Q_M \cong X \oplus Q_X$ in $\sC^b(\proj\Lambda)$. Since $0 \rightarrow N \oplus Q_X \rightarrow X \oplus Q_X \rightarrow Z \rightarrow 0$ is still a short exact sequence and isomorphic to $0 \rightarrow N \oplus Q_X \rightarrow M \oplus Z \oplus Q_M \rightarrow Z \rightarrow 0$, we have that $M \oplus Q_M \leqdeg N \oplus Q_X$ in $\bchnproj$. In particular, $M \leqdeghom N$.
\end{proof}

\begin{example}
\label{ex:hom_deg_triangles}
The short exact sequences describing the covering relations in \Cref{ex:trivial_deg_ses} induce triangles describing the covering relations for $kA_1$ in \Cref{ex:A_1_homotopy}. 
\end{example}

We may apply this result to compare degenerations in $\ivarPhom{a}{b}$ and $\bvarPhom$.

\begin{theorem}
Let $[a,b] \subset [a',b']$. Let $M, N \in \ivarPhom{a}{b}$. They correspond to points $M,N \in \varPhom$ with $\ast \in \{[a',b'], \mathbf{b}\}$. If there is a degeneration $M \leqdeghom N$ in $\varPhom$ then there is a degeneration $M \leqdeghom N$ in $\ivarPhom{a}{b}$.
\end{theorem}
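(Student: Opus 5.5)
The plan is to reduce to a degeneration of honest chain complexes in a single variety $\sC_{P(u)}$, cut that degeneration down to the interval $[a,b]$ by brutal truncation, and then remove the stalk complexes created by the truncation using \Cref{thm:deg_and_ses_in_chnproj} together with an Ext-projectivity/Ext-injectivity argument.

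\emph{Step 1: reduction to one chain complex variety.} Choose representatives $M_v, N_v \in \sC_{P(v)}$ with a common $v$ supported on $[a,b]$. This is possible by the filtered-colimit argument of \Cref{prop:properties_ind_var}: we add acyclic complexes $P_{iq}$ with $q, q+1 \in [a,b]$ until the underlying modules agree. By \Cref{rmk:deg_homotopy} and \Cref{prop:properties_ind_var} (\ref{item:lemma}), the degeneration in $\varPhom$ gives some $u \geq v$ with $M_u \leqdeg N_u$ in $\sC_{P(u)}$. The key observation is that the added acyclic part is the same on both sides: $M_u = M_v \oplus Q$ and $N_u = N_v \oplus Q$, where $Q = \bigoplus P_{iq}$ is determined by $u - v$ and is supported in $[a',b']$, or is merely bounded.

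\emph{Step 2: truncation.} Define $\tau \colon \sC_{P(u)} \to \sC_{P'}$, where $P' = (P(u)^q)_{a \leq q \leq b}$, by forgetting all differentials leaving or entering degrees outside $[a,b]$. This is a morphism of varieties. It is equivariant along the projection $\sG_{P(u)} \to \sG_{P'}$, which is surjective, so $\tau(\sO_X) = \sO_{\tau X}$. Continuity then gives $\tau(\overline{\sO_{M_u}}) \subset \overline{\sO_{\tau M_u}}$, hence $\tau M_u \leqdeg \tau N_u$.

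Since $M_v, N_v$ live on $[a,b]$, we get $\tau M_u = M_v \oplus \tau Q$ and $\tau N_u = N_v \oplus \tau Q$. Here $\tau Q = Q' \oplus S_b \oplus S_a$, where:
\begin{itemize}
\item $Q'$ is acyclic on $[a,b]$;
\item $S_b$ is a sum of stalks $P_i$ in degree $b$, coming from the $P_{ib}$;
\item $S_a$ is a sum of stalks $P_i$ in degree $a$, coming from the $P_{i,a-1}$;
\item all other $P_{iq}$ are killed.
\end{itemize}
So in $\ichnproj{a}{b}$ we have $M_v \oplus Q' \oplus S_a \oplus S_b \leqdeg N_v \oplus Q' \oplus S_a \oplus S_b$. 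It remains to cancel $S_a \oplus S_b$; note that cancellation of common summands fails in general, so this needs an argument.

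\emph{Step 3: cancelling the stalks (main obstacle).} In the exact category $\ichnproj{a}{b}$, a stalk $S = P[-b]$ in the top degree is Ext-projective. Given a degree-wise split sequence $0 \to X \to E \to S \to 0$, lift $\id_P$ to $P \to E^b$; there is no differential out of degree $b$ to check, so this gives a chain map section. Dually, a stalk in degree $a$ is Ext-injective.

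Put $M' = M_v \oplus Q' \oplus S_a$ and $N' = N_v \oplus Q' \oplus S_a$, so that $M' \oplus S_b \leqdeg N' \oplus S_b$. Apply \Cref{thm:deg_and_ses_in_chnproj} (\ref{item:ses_2}) to get $0 \to Z \to M' \oplus S_b \oplus Z \xrightarrow{\pi} N' \oplus S_b \to 0$. Pull back along the inclusion $S_b \to N' \oplus S_b$ to get $0 \to Z \to E \to S_b \to 0$ with $E = \pi^{-1}(S_b) \subset M' \oplus S_b \oplus Z$. This sequence splits by Ext-projectivity, so $E \cong Z \oplus S_b$. The cokernel of $E \hookrightarrow M' \oplus S_b \oplus Z$ is $N'$, and the sequence is degree-wise split because all terms are projective. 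This yields $0 \to Z \oplus S_b \to M' \oplus (Z \oplus S_b) \to N' \to 0$, which is exactly (\ref{item:ses_2}) with $Z \oplus S_b$ in place of $Z$. Hence $M' \leqdeg N'$ by \Cref{lem:analogue_Riedtmann}.

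Now use \Cref{thm:deg_and_ses_in_chnproj} (\ref{item:ses_1}) and run the dual argument, pushing out along the projection to $S_a$ and using Ext-injectivity, to cancel $S_a$. This gives $M_v \oplus Q' \leqdeg N_v \oplus Q'$ with $Q'$ acyclic on $[a,b]$, and \Cref{rmk:deg_homotopy} gives $M \leqdeghom N$ in $\ivarPhom{a}{b}$.

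The delicate points I expect are:
\begin{itemize}
\item checking that the pullback and pushout constructions stay inside the exact structure, that is, that they are degree-wise split;
\item the degenerate case $a = b$, where a stalk is both Ext-projective and Ext-injective and both cancellations apply directly.
\end{itemize}
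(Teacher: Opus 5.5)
Your proof is correct, and it takes a genuinely different route from the paper's.

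\textbf{The paper's route.} The paper works entirely in $\bhomproj$. It starts from the triangle $N \to M\oplus Z \to Z \to N[1]$ given by \Cref{thm:deg_traingles_homotopy}. It then replaces $Z$ by its brutal truncation $Z_{[a,b]}$ through two applications of the octahedral axiom, using degree reasons to kill the $M$-components of the comparison maps $\varphi$ and $\psi$.

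\textbf{Your route.} You stay inside a single chain complex variety and use only the exact-category characterization \Cref{thm:deg_and_ses_in_chnproj}. The steps are:
\begin{enumerate}
\item a common acyclic padding $Q$ on both sides;
\item the equivariant coordinate projection $\tau$, which preserves orbit-closure inclusions;
\item cancellation of the boundary stalks $S_b$ and $S_a$, using their Ext-projectivity and Ext-injectivity in $\ichnproj{a}{b}$.
\end{enumerate}
The cancellation step relies essentially on the fact, which the theorem provides, that the auxiliary $Z$ can be taken in the same interval. Otherwise a section $S_b \to E$ would have to satisfy a nontrivial condition in degree $b+1$.

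\textbf{What each approach buys.} In your argument every identification is explicit. The paper, by contrast, must identify the cone of $\varphi=(0,\varphi_Z)^T$ with $M\oplus Z_{\le b}$, and dually the object $Y$ with $M\oplus Z_{[a,b]}$. This needs more than the vanishing of the $M$-component: one only knows $\beta\varphi_Z=\iota$, where $\beta$ is the $Z\to Z$ component of $M\oplus Z\to Z$. You never compare cones. You also isolate the one genuinely delicate point, cancelling common summands, and settle it by an elementary splitting argument.

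The paper's approach is shorter and intrinsic to the triangulated structure. It shows directly that the auxiliary object in a degeneration triangle can be truncated, without choosing representatives in a fixed variety.
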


\begin{proof}
    By \Cref{thm:deg_traingles_homotopy}, there exists a complex $Z \in \homprojstar$ and a triangle (T1) $N \rightarrow M \oplus Z \rightarrow Z \rightarrow N[1]$. We will show that we can truncate $Z$ so that it is in $\homprojint{a}{b}$. Recall that there are two triangles (T2) $Z_{>b} \rightarrow Z \rightarrow Z_{\leq b} \rightarrow Z_{>b}[1]$ and (T3) $Z_{[a,b]} \rightarrow Z_{\leq b} \rightarrow Z_{<a} \rightarrow Z_{[a,b]}[1]$. We apply the octahedral axiom to (T1) and (T2). 
    \begin{equation*}
        \begin{tikzcd}
        	& {Z_{>b}} & {Z_{>b}} & \\
        	N & {M \oplus Z} & Z & {N[1]} \\
        	N & X & {Z_{\leq b}} & {N[1]} \\
        	& {Z_{>b}[1]} & {Z_{>b}[1]}
        	\arrow[equals, from=1-2, to=1-3]
        	\arrow["\varphi"', from=1-2, to=2-2]
        	\arrow[from=1-3, to=2-3]
        	\arrow[from=2-1, to=2-2]
        	\arrow[equals, from=2-1, to=3-1]
        	\arrow[from=2-2, to=2-3]
        	\arrow[from=2-2, to=3-2]
        	\arrow[from=2-3, to=2-4]
        	\arrow[from=2-3, to=3-3]
        	\arrow[equals, from=2-4, to=3-4]
        	\arrow[from=3-1, to=3-2]
        	\arrow[from=3-2, to=3-3]
        	\arrow[from=3-2, to=4-2]
        	\arrow[from=3-3, to=3-4]
        	\arrow[from=3-3, to=4-3]
        	\arrow[equals, from=4-2, to=4-3]
        \end{tikzcd}
    \end{equation*}
    Since $M$ has no entries in degree bigger than $b$, we know that $\varphi = (0,\varphi_Z)^T$. So we can conclude that $X = {M \oplus Z_{\leq b}}$. So we obtain a triangle (T4) $N \rightarrow M \oplus Z_{\leq b} \rightarrow Z_{\leq b} \rightarrow N[1]$. Now we apply the octahedral axiom to (T3) and (T4).
    \[\begin{tikzcd}
    	& {Z_{<a}[-1]} & {Z_{<a}[-1]} & \\
    	N & Y & {Z_{[a,b]}} & {N[1]} \\
    	N & {M \oplus Z_{\leq b}} & {Z_{\leq b}} & {N[1]} \\
    	& {Z_{<a}} & {Z_{<a}}
    	\arrow[equals, from=1-2, to=1-3]
    	\arrow[from=1-2, to=2-2]
    	\arrow[from=1-3, to=2-3]
    	\arrow[from=2-1, to=2-2]
    	\arrow[equals, from=2-1, to=3-1]
    	\arrow[from=2-2, to=2-3]
    	\arrow[from=2-2, to=3-2]
    	\arrow[from=2-3, to=2-4]
    	\arrow[from=2-3, to=3-3]
    	\arrow[equals, from=2-4, to=3-4]
    	\arrow[from=3-1, to=3-2]
    	\arrow[from=3-2, to=3-3]
    	\arrow["\psi"', from=3-2, to=4-2]
    	\arrow[from=3-3, to=3-4]
    	\arrow[from=3-3, to=4-3]
    	\arrow[equals, from=4-2, to=4-3]
    \end{tikzcd}\]
    Again, we have that $\psi = (0, \psi_Z)$ so $Y = M \oplus Z_{[a,b]}$. So we obtain a triangle $N \rightarrow M \oplus Z_{[a,b]} \rightarrow Z_{\leq b} \rightarrow N[1]$ satisfying \Cref{thm:deg_traingles_homotopy}. Thus there is a degeneration $M \leqdeghom N$ in $\ivarPhom{a}{b}$.
\end{proof}

For another application, we can show a link from module degenerations to chain complex degenerations in $\sK^*_g$.

\begin{lemma}
\label{lem:module_deg_to_cx_deg}
    Let $\mathrm{gldim}(\Lambda) < \infty$ and $M,N \in \mod \Lambda$ with minimal projective resolutions $P_M^{\bullet}, P_N^{\bullet}$. If $M \leqdeg N$ then $P_M^{\bullet} \leqdeghom P_N^{\bullet}$.
\end{lemma}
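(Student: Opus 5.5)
Since $M \leqdeg N$ in $\mod \Lambda$, the characterization of \cite{Rie,Z} yields a module $Z$ and a short exact sequence $0 \rightarrow N \rightarrow M \oplus Z \rightarrow Z \rightarrow 0$ in $\mod \Lambda$. I would lift this sequence to complexes and then conclude with \Cref{thm:deg_traingles_homotopy}.

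First, the degeneration forces $M$ and $N$ to have the same dimension vector. Since $\mathrm{gldim}(\Lambda)<\infty$, the Cartan matrix is invertible and the class of a module in $K_0(\mod\Lambda)\cong K_0(\bhomproj)$ is the $g$-vector of its projective resolution. Hence $P_M^\bullet$ and $P_N^\bullet$ have the same $g$-vector $g$, and both define points of $\bvarPhom$. Finite global dimension also gives that every finite dimensional module has a bounded projective resolution. So $P_Z^\bullet \in \bhomproj$, and the inclusion $\bhomproj \rightarrow D^b(\mod\Lambda)$ is an equivalence sending projective resolutions to the modules they resolve.

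Next I would produce a triangle. Applying the horseshoe lemma to $0 \rightarrow N \rightarrow M\oplus Z \rightarrow Z \rightarrow 0$ gives a degreewise split short exact sequence $0 \rightarrow P_N^\bullet \rightarrow E \rightarrow P_Z^\bullet \rightarrow 0$ in $\bchnproj$. Here $E$ is a projective resolution of $M \oplus Z$, so $E$ is homotopy equivalent to $P_M^\bullet \oplus P_Z^\bullet$, the two differing by a null-homotopic summand. By \Cref{lem:triangles_and_ses} this gives a triangle
\begin{equation*}
P_N^\bullet \rightarrow P_M^\bullet \oplus P_Z^\bullet \rightarrow P_Z^\bullet \rightarrow P_N^\bullet[1]
\end{equation*}
in $\bhomproj$. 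Equivalently, one can take the triangle in $D^b(\mod \Lambda)$ induced by the short exact sequence and transport it through the equivalence. Then \Cref{thm:deg_traingles_homotopy}, (2) $\Rightarrow$ (1), yields $P_M^\bullet \leqdeghom P_N^\bullet$.

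The main obstacle is the bookkeeping in the previous paragraph, namely identifying $E$ with $P_M^\bullet \oplus P_Z^\bullet$ up to homotopy. This identification holds because any two projective resolutions of the same module are homotopy equivalent. The resolution of a direct sum can be taken to be the direct sum of the resolutions, which gives the required decomposition $E \simeq P_M^\bullet \oplus P_Z^\bullet$. We also need the two points to lie in the same ind-variety $\bvarPhom$, and this is exactly the $g$-vector check above.
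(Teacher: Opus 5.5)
Your proposal is correct and is essentially the paper's argument: the paper's one-line proof (horseshoe lemma plus uniqueness of projective resolutions up to homotopy) implicitly means exactly your chain of Zwara's sequence $0 \rightarrow N \rightarrow M \oplus Z \rightarrow Z \rightarrow 0$, the horseshoe lemma, and then \Cref{thm:deg_traingles_homotopy}. Your separate $g$-vector check via the Cartan matrix is harmless but not needed, since the triangle $P_N^\bullet \rightarrow P_M^\bullet \oplus P_Z^\bullet \rightarrow P_Z^\bullet \rightarrow P_N^\bullet[1]$ already forces $[P_M^\bullet] = [P_N^\bullet]$ in the Grothendieck group.
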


\begin{proof}
    This follows directly from the horseshoe lemma using that projective resolutions are unique up to homotopy.
\end{proof}

\begin{example}
   We return to $kA_2$ and \Cref{ex:A_2}. We consider the representation variety for $\dd = (1,1)$. Clearly, it is of the form $\rep(\Lambda, \dd) = k$. It has two orbits: the semisimple one containing only $S_1 \oplus S_2$ and the orbit of $P_1$. We note that $P_1$ degenerates to $S_1 \oplus S_2$ similarly to \Cref{ex:trivial_deg}. The minimal projective resolution of $S_1$ is $P_{S_1} \colon P_2 \rightarrow P_1$, the one of $S_2 = P_2$ is $P_{S_2} \colon 0 \rightarrow P_2$ and the one of $P_1$ is $P_{P_1} \colon 0 \rightarrow P_1$. So by \Cref{lem:module_deg_to_cx_deg}, $P_{P_1} \leqdeghom P_{S_1} \oplus P_{S_2}$. This degeneration is related to $\pmat{0 \amsamp 0 \\ 0 \amsamp 1} \leqdeg \pmat{0 \amsamp 1 \\ 0 \amsamp 0}$ in \Cref{ex:A_2}.
\end{example}

\begin{remark}
    In general, for complexes $X, Y \in \sK_g^*$ with $X \leqdeghom Y$, the associated triangle $Y \rightarrow X \oplus Z \rightarrow Z \rightarrow Y[1]$ gives rise to a long exact sequence of its cohomologies. However, this long exact sequence does not in general split into short exact sequences that would easily give rise to module degenerations of the cohomologies. 
\end{remark}

\subsection{The ext and hom order}

We can also define analogues of the ext and hom order for $\bhomproj$. 

\begin{definition}
\label{def:ext_order_hom}
    Let $X,Y \in \bhomproj$. We say $X \leqexthom Y$ if there exist an integer $n \geq 1$ and a chain of $n$ triangles $ A_i \rightarrow B_i \rightarrow C_i \rightarrow C_i[1]$ with $B_1 = X$, for every $i \geq 2$, $B_i = A_{i-1} \oplus C_{i-1}$ and $Y = A_n \oplus C_n$.
\end{definition}

\begin{remark}
    This is a well defined order on isomorphism classes of $\bhomproj$. Note that similarly to the degeneration order, this is the colimit of the extension order of chain complexes in the sense that $X \leqexthom Y$ if and only if there exist acyclic complexes $N_X, N_Y$ such that $X \oplus N_X \leqext Y \oplus N_Y$.
\end{remark}

\begin{definition}
\label{def:hom_order_hom}
    Let $X,Y \in \bhomproj$. We say $X \leqhomhom Y$ if there exist acyclic complexes $N_X, N_Y$ such that $X \oplus N_X \leqhom Y \oplus N_Y$. 
\end{definition}

\begin{remark}
    This is a well defined order on isomorphism classes of $\bhomproj$ since for any acyclic complex $Q$, $X \leqhom Y \Rightarrow X \oplus Q \leqhom Y \oplus Q$.
\end{remark}

The following implications follow directly from \Cref{lem:ext_deg_hom_ch_cx}.

\begin{lemma}
\label{lem:ext_deg_hom_hom}
    Let $X,Y \in \bhomproj$. Then the following implications hold:
    \begin{equation*}
        (X \leqexthom Y) \Rightarrow (X \leqdeghom Y) \Rightarrow (X \leqhomhom Y).
    \end{equation*}
\end{lemma}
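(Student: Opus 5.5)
We reduce both implications to \Cref{lem:ext_deg_hom_ch_cx} by passing to representatives in $\bchnproj$ and adding null-homotopic (acyclic) complexes. The key observations are that each order on $\bhomproj$ is, by definition or by the remarks above, the ``colimit'' of the corresponding order on $\bchnproj$ under adding acyclic complexes. Moreover, for degenerations, \Cref{rmk:deg_homotopy} says $X \leqdeghom Y$ if and only if there are acyclic $Q_X, Q_Y$ with $X \oplus Q_X \leqdeg Y \oplus Q_Y$ in $\bchnproj$.

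For the first implication, assume $X \leqexthom Y$. By the remark following \Cref{def:ext_order_hom}, there are acyclic $N_X, N_Y$ with $X \oplus N_X \leqext Y \oplus N_Y$ in $\bchnproj$. If one prefers not to rely on that remark, argue triangle by triangle. By transitivity of $\leqdeghom$ (an orbit-closure inclusion order on $\bvarPhom$), it suffices to treat a single triangle $A \to B \to C \to A[1]$ and show $B \leqdeghom A \oplus C$. Adding the triangle $C \xrightarrow{\id} C \to 0 \to C[1]$ gives a triangle $A \oplus C \to B \oplus C \to C \to (A\oplus C)[1]$, so $B \leqdeghom A \oplus C$ by \Cref{thm:deg_traingles_homotopy}, (2)$\Rightarrow$(1). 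Alternatively, with the ext-colimit description, \Cref{lem:ext_deg_hom_ch_cx} gives $X \oplus N_X \leqdeg Y \oplus N_Y$ in $\bchnproj$, and \Cref{rmk:deg_homotopy} converts this into $X \leqdeghom Y$. One should also check that $X$ and $Y$ have the same $g$-vector, so that they lie in the same $\bvarPhom$. This holds because triangles preserve classes in the Grothendieck group.

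For the second implication, assume $X \leqdeghom Y$. By \Cref{rmk:deg_homotopy}, choose acyclic $Q_X, Q_Y$ with $X \oplus Q_X \leqdeg Y \oplus Q_Y$ in some $\sC_{P(v)}$. \Cref{lem:ext_deg_hom_ch_cx} then gives $X \oplus Q_X \leqhom Y \oplus Q_Y$, which is exactly \Cref{def:hom_order_hom} with $N_X = Q_X$ and $N_Y = Q_Y$.

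The only delicate point is bookkeeping rather than mathematics. One must make sure the representatives used are genuine complexes in $\bchnproj$ with matching underlying modules, which \Cref{rmk:deg_homotopy} provides. One must also check that the ext-order triangle chain lifts to short exact sequences. The latter is handled by \Cref{lem:triangles_and_ses}: each triangle is isomorphic to one coming from a degree-wise split short exact sequence, up to adding null-homotopic summands. I expect this lifting to be the main step needing care, and I would handle it with the triangle-by-triangle argument via \Cref{thm:deg_traingles_homotopy} described above.
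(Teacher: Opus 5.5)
Your proposal is correct and matches the paper, whose entire proof is that both implications follow directly from \Cref{lem:ext_deg_hom_ch_cx}; your reduction by adding acyclic complexes, using \Cref{rmk:deg_homotopy}, the colimit remark for $\leqexthom$ and \Cref{def:hom_order_hom}, is exactly that argument written out. Your alternative for the first implication, going triangle by triangle through \Cref{thm:deg_traingles_homotopy} together with the Grothendieck group check of the $g$-vectors, is also valid, and it avoids relying on the colimit description of $\leqexthom$, which the paper states without proof.
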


\begin{example}
\label{ex:ext_not_deg_Kb}
    The ext and degeneration order are not in general identical.  We consider an example stemming from representation varieties in \cite{Rie}. Let 
    \begin{equation*}
        \Lambda = k(
        \begin{tikzcd}
        	1 & 2
        	\arrow["a", from=1-1, to=1-2]
        	\arrow["b", from=1-2, to=1-2, loop, in=325, out=35, distance=10mm]
        \end{tikzcd})/\langle b^2\rangle.
    \end{equation*}
    We define two modules $M$ and $N$ as follows:
    \begin{equation*}
        M = \begin{tikzcd}
        	k & k
        	\arrow["\pmatquiver{1 \\ 0}", from=1-1, to=1-2]
        	\arrow["\pmatquiver{0 \amsamp 0 \\ 1 \amsamp 0}", from=1-2, to=1-2, loop, in=325, out=35, distance=10mm]
        \end{tikzcd}
        \qquad
        N = \begin{tikzcd}
        	1 & 2
        	\arrow["\pmatquiver{0 \\ 1}", from=1-1, to=1-2]
        	\arrow["\pmatquiver{0 \amsamp 0 \\ 1 \amsamp 0}", from=1-2, to=1-2, loop, in=325, out=35, distance=10mm]
        \end{tikzcd}
    \end{equation*}
    Then $M$ degenerates to $N$ in the representation variety. Since $M \cong P_1$ is projective, its minimal projective resolution is $P_M^\bullet = (0 \rightarrow P_1)$. The minimal projective resolution of $N$ is $P_N^\bullet = (P_2 \rightarrow P_1 \oplus P_2)$. By \Cref{lem:module_deg_to_cx_deg}, $P_M^\bullet \leqdeghom P_N^\bullet$ but this relation cannot be obtained in the ext order since $P_N^\bullet$ is indecomposable.
\end{example}

\biblio

\section*{Acknowledgements}

This work is part of the author's PhD. The author thanks her advisors Claire Amiot and Pierre-Guy Plamondon and for their direction and insights. 

Part of this work was done while the author was at the Université du Québec à Montréal. This mobility was funded by the Mitacs Globalink Research Award. The author would like to thank Hugh Thomas for helpful discussions and the Laboratoire d'Algèbre, de Combinatoire et d’Informatique Mathématique for a warm welcome.

The author also thanks René Marczinzik, Calvin Pfeiffer and Jan Schröer for interesting discussions.

This project has received funding from the European Union’s Horizon Europe research and innovation programme under the Marie Skłodowska-Curie grant agreement No 101126554.

\section*{Disclaimer}
Co-Funded by the European Union. Views and opinions expressed are however those of the author only and do not necessarily reflect those of the European Union. Neither the European Union nor the granting authority can be held responsible for them.
\noindent \includegraphics[height = 1cm]{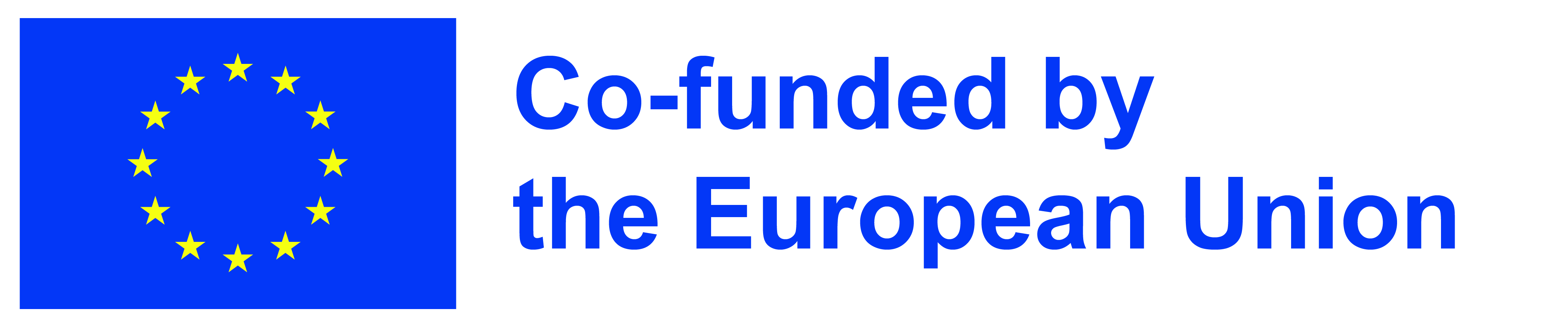}

\biblio

\bibliographystyle{amsalpha}
\bibliography{literature}

\end{document}